\journal{arXiv}
\newtheorem{thm}{Theorem}[section]
\newtheorem{lem}[thm]{Lemma}
\newtheorem{coro}[thm]{Corollary}
\newtheorem{prop}[thm]{Proposition}
\newtheorem*{SE}{Schoenberg-Edrei Theorem}
\newtheorem*{CBF}{Cauchy-Binet Formula}
\newtheorem*{BKW}{Beraha-Kahane-Weiss Theorem}
\newtheorem*{NI}{Newton Inequality}
\newtheorem*{DJI}{Desnanot-Jacobi Determinant Identity}
\theoremstyle{definition}
\newtheorem{rem}[thm]{Remark}
\newtheorem{exm}[thm]{Example}
\newcommand{\lrf}[1]{\left\lfloor #1\right\rfloor}
\newcommand{\lrc}[1]{\left\lceil #1\right\rceil}
\newcommand{\lmn}[2]{\ell_{#2}^{(#1)}}
\newcommand\lm[1]{\ell^{(#1)}}
\def\R{\mathcal{R}}
\def\M{\mathcal{M}}
\def\L{\mathcal{L}}
\def\s{\widehat{S}}
\def\p{\widehat{P}}
\def\d{\widehat{D}}
\def\c{\widehat{C}}
\def\hr{\widehat{R}}
\def\r{R^+}
\def\sint{\prec_{int}}
\def\salt{\prec_{alt}}
\def\sgn{\mathrm{sgn\,}}
\def\la{\lambda}
\def\N{\mathbb{N}}
\def\Z{\mathbb{Z}}
\def\G{\mathfrak{G}}
\def\rz{\mathrm{RZ}}
\def\2p{\Lambda}
\numberwithin{equation}{section}
\begin{document}

\begin{frontmatter}

\title{Analytic combinatorics of coordination numbers of cubic lattices}
\author[a]{Huyile Liang\corref{cor1}}
\ead{lianghuyile@imnu.edu.cn}
\author[b]{Yanni Pei\corref{cor2}}
\ead{peiyanni@hotmail.com}
\author[b]{Yi Wang\corref{cor3}}
\ead{wangyi@dlut.edu.cn}
\cortext[cor3]{Corresponding author.}
\address[a]{College of Mathematics Science and Center for Applied Mathematical Science,
Inner Mongolia Normal University, Hohhot 010022, P.R. China}
\address[b]{School of Mathematical Sciences, Dalian University of Technology, Dalian 116024, P.R. China}
\date{}

\begin{abstract}
We investigate coordination numbers of the cubic lattices
with emphases on their analytic behaviors,
including the total positivity of the coordination matrices,
the distribution of zeros of the coordination polynomials,
the asymptotic normality of the coefficients of the coordination polynomials,
the log-concavity and the log-convexity of the coordination numbers.
\end{abstract}

\begin{keyword}
coordination sequence\sep Riordan array\sep totally positive matrix
\MSC[2020]
05A15\sep 15B48\sep 26C10\sep 60F05
\end{keyword}

\end{frontmatter}

\section{Introduction}

Following Conway and Sloane \cite{CS97},
the {\it coordination sequence} of an infinite vertex-transitive graph $\G$ is the sequence
$(S(0), S(1), S(2),\ldots)$,
where $S(n)$ is the number of vertices at distance $n$ from some fixed vertex of $\G$.
The partial sums
$D(n)=S(0)+S(1)+\cdots+S(n)$
are called the {\it crystal ball numbers}.
As was pointed out in O'Keeffe \cite{OK98},
one can use the coordination sequence somewhat like a fingerprint to identity structures of $\G$.
We refer the reader to \cite{BG97,CS97,OK98} and references therein for details.
Let $S(x)=\sum_{n\ge 0}S(n)x^n$
and $D(x)=\sum_{n\ge 0}D(n)x^n$
be the generating functions of the coordination sequence and the crystal ball numbers.
Then $D(x)=S(x)/(1-x)$.

For the $k$-dimensional integer lattice $\Z^k$,
Conway and Sloane \cite{CS97} gave the generating functions
$S_k(x)=(1+x)^k/(1-x)^k$
and
$D_k(x)=(1+x)^k/(1-x)^{k+1}$
of the coordination sequence and the crystal ball numbers respectively.
Denote $S_k(x)=\sum_{n\ge 0}S(n,k)x^n$ and $D_k(x)=\sum_{n\ge 0}D(n,k)x^n$.
The first few terms of the coordination numbers $S(n,k)$
and crystal ball numbers $D(n,k)$ of the cubic lattices are as follows:
\begin{center}
\begin{tabular}{ccc}
\begin{tabular}{c|*{5}{c}}
$n\setminus k$
 & 0 & 1 & 2 & 3 & 4 \\
\hline
0 & 1 & 1 & 1 & 1 & 1\\
1 & 0 & 2 & 4 & 6 & 8\\
2 & 0 & 2 & 8 & 18 & 32\\
3 & 0 & 2 & 12 & 38 & 88\\
4 & 0 & 2 & 16 & 66 & 192\\
\end{tabular}
& &
\begin{tabular}{c|*{5}{c}}
$n\setminus k$
 & 0 & 1 & 2 & 3 & 4 \\
\hline
0 & 1 & 1 & 1 & 1& 1 \\
1 & 1 & 3 & 5 & 7& 9\\
2 & 1 & 5 & 13 & 25& 41\\
3 & 1 & 7 & 25 & 63& 129\\
4 & 1 & 9 & 41 & 129& 321 \\
\end{tabular}
\\[15mm] The numbers $S(n,k)$ & & The numbers $D(n,k)$
\end{tabular}
\end{center}

By the definition, we have
\begin{equation*}\label{S-def}
S(n,k)=\#\{(x_1,\ldots,x_k)\in\Z^k:\quad |x_1|+\cdots+|x_k|=n\}
\end{equation*}
and
\begin{equation}\label{D-S}
D(n,k)=S(0,k)+S(1,k)+\cdots+S(n,k).
\end{equation}
Let $S(x,y)=\sum_{n,k\ge 0}S(n,k)x^ny^k$
and $D(x,y)=\sum_{n,k\ge 0}D(n,k)x^ny^k$
denote the bivariate generating functions of $S(n,k)$ and $D(n,k)$ respectively.
Then
\begin{equation}\label{Sxy}
  S(x,y)=\sum_{k\ge 0}S_k(x)y^k=\sum_{k\ge 0}\left(\frac{1+x}{1-x}\right)^ky^k=\frac{1-x}{1-x-y-xy}
\end{equation}
and
\begin{equation}\label{Dxy}
  D(x,y)=\sum_{k\ge 0}D_k(x)y^k=\sum_{k\ge 0}\frac{(1+x)^k}{(1-x)^{k+1}}y^k
  =\frac{1}{1-x}S(x,y)=\frac{1}{1-x-y-xy}.
\end{equation}

It is clear from \eqref{Dxy} that
the numbers $D(n,k)$ turn out to be the {\it Delannoy numbers}
(see \cite{WZC19} for instance and \cite{BS05} for historical remarks).
Let $D=[D(n,k)]_{n,k\ge 0}$ be the infinite Delannoy matrix.
Then the generating function of the $k$th column of $D$ is $D_k(x)$ for $k=0,1,2,\ldots$.
We denote it by
$$D=\M[D_0(x),D_1(x),D_2(x),\ldots].$$
Let $\d=\M[D_0(x),xD_1(x),x^2D_2(x),\ldots]=[d(n,k)]_{n,k\ge 0}$
be the infinite lower triangular matrix corresponding to $D$.
Then $d(n,k)=D(n-k,k)$ for $n\ge k\ge 0$.
Let $d_n(x)=\sum_{k=0}^nd(n,k)x^k$ be the $n$th row generating function of $\d$.
We call $\d$ and $d_n(x)$ the {\it Delannoy triangle} and the {\it Delannoy polynomials} respectively.

We define four matrices related to the coordination numbers of the cube lattices as follows:
\begin{enumerate}[\rm (i)]
  \item $S=\M[S_0(x),S_1(x),S_2(x),\ldots]=[S(n,k)]_{n,k\ge 0}$.
  \item $C=\M[S_1(x),S_2(x),S_3(x),\ldots]=[C(n,k)]_{n,k\ge 0}$.
  \item $\s=\M[S_0(x),xS_1(x),x^2S_2(x),\ldots]=[s(n,k)]_{n,k\ge 0}$.
  \item $\c=\M[S_1(x),xS_2(x),x^2S_3(x),\ldots]=[c(n,k)]_{n,k\ge 0}$.
\end{enumerate}
Clearly, $C(n,k)=S(n,k+1)$ for $n,k\ge 0$,
$s(n,k)=S(n-k,k)$ and $c(n,k)=C(n-k,k)$ for $n\ge k\ge 0$.
Let $C(x,y)=\sum_{n,k\ge 0}C(n,k)x^ny^k$ be the bivariate generating functions of $C(n,k)$.
Then
\begin{equation}\label{Cxy}
  C(x,y)=\sum_{n,k\ge 0}S(n,k+1)x^ny^k=\sum_{k\ge 0}S_{k+1}(x)y^k
  =\sum_{k\ge 0}\left(\frac{1+x}{1-x}\right)^{k+1}y^k=\frac{1+x}{1-x-y-xy}.
\end{equation}
Let $c_n(x)=\sum_{k=0}^nc(n,k)x^k$ and $s_n(x)=\sum_{k=0}^ns(n,k)x^k$
be the $n$th row generating function of $\c$ and $\s$ respectively.
Note that
\begin{equation*}\label{c-tri}
  \c=[c(n,k)]_{n\ge k\ge 0}=
  \left(\begin{array}{cccccc}
  1 &  &  &  &  &  \\
  2 & 1 &  &  &  &  \\
  2 & 4 & 1 &  &  &  \\
  2 & 8 & 6 & 1 &  &  \\
  2 & 12 & 18 & 8 & 1 &  \\
  \vdots &  &  &  &  & \ddots \\
 \end{array}\right),\qquad
\s=\left(\begin{array}{cc}
1 & 0 \\
0 & \c \\
\end{array}\right)
\end{equation*}
and $s_n(x)=xc_{n-1}(x)$ for $n\ge 1$.
For our purpose,
we call the matrix $\c$ and its row generating functions $c_n(x)$
the {\it coordination triangle} and the {\it coordination polynomials} of the cubic lattices respectively.
We refer the reader to distinguish the coordination triangle
from the coordinator triangle defined by Conway and Sloane \cite{CS97}.
The coordinator triangle of the cube lattices is the Pascal triangle.
It is interesting that if we substitute the nonnegative integer lattice $\N$ for the integer lattice $\Z$,
then the corresponding coordination matrices $C$ and $\c$
will be the Pascal square $P=\left[\binom{n+k}{k}\right]_{n,k\ge 0}$
and the Pascal triangle $\p=\left[\binom{n}{k}\right]_{n,k\ge 0}$ respectively.

The main objective of this paper is to investigate
analytic properties of the coordination numbers of the cubic lattices.
The paper is organized as follows.
In \S 2,
we provide some preliminary results on the coordination numbers,
the coordination triangle and the coordination polynomials.
In \S 3,
we consider analytic behaviors of the coordination numbers,
including the total positivity of the coordination matrices,
the distribution of zeros of the coordination polynomials,
the asymptotic normality of the coefficients of the coordination polynomials,
the log-concavity and the log-convexity of the coordination numbers.
In \S 4,
we discuss further work and related problems.


\section{Preliminaries}

In this section we present some basic properties of coordination numbers of the cubic lattices.
Most of them are elementary and partial results have occurred in the literature.
We may investigate the coordination numbers and the crystal ball numbers of the cube lattices
in a unified approach.
Actually, let $m\in\N$.
Define the numbers $L^{(m)}(n,k)$ by
\begin{equation}\label{Lnk-rr}
  L^{(m)}(n,k)=L^{(m)}(n-1,k-1)+L^{(m)}(n-1,k)+L^{(m)}(n,k-1),
\end{equation}
with the initial values $L^{(m)}(0,k)=1$ for $k\ge 0$ and $L^{(m)}(n,0)=m$ for $n\ge 1$.
It follows from \eqref{Lnk-rr} that the bivariate generating function of $L^{(m)}(n,k)$ is
\begin{equation}\label{Lxy}
  L^{(m)}(x,y)=\sum_{n,k\ge 0}L^{(m)}(n,k)x^ny^k=\frac{1+(m-1)x}{1-x-y-xy}.
\end{equation}
Comparing \eqref{Lxy} to \eqref{Sxy}, \eqref{Dxy} and \eqref{Cxy} respectively,
we see that the Delannoy numbers $D(n,k)=L^{(1)}(n,k)$,
the coordination numbers $S(n,k)=L^{(0)}(n,k)$ and $C(n,k)=L^{(2)}(n,k)$.
Note also that $L^{(m)}(x,y)=[1+(m-1)x]D(x,y)$.
Hence
\begin{equation}\label{L-D}
L^{(m)}(n,k)=D(n,k)+(m-1)D(n-1,k)
\end{equation}
for $n,k\ge 0$.
Here we use the notation $D(n,k)=0$ unless $n,k\ge 0$.

\subsection{Coordination numbers and Delannoy numbers}

The Delannoy numbers enjoy nice combinatorial interpretations and beautiful formulas.
For example,
the Delannoy numbers $D(n,k)$ count the number of lattice paths from $(0,0)$ to $(n,k)$
using steps $(1,0), (0,1)$ and $(1,1)$.
It is well known \cite[p. 81]{Com74} that
\begin{equation}\label{Dnk-2}
  D(n,k)=\sum_{i}\binom{n}{i}\binom{k}{i}2^i
\end{equation}
and
\begin{equation}\label{Dnk}
  D(n,k)=\sum_{i}\binom{n+i}{k}\binom{k}{i}.
\end{equation}
The following results are immediate from \eqref{L-D}, \eqref{Dnk-2} and \eqref{Dnk}.

\begin{prop}\label{C-N}
We have
\begin{enumerate}[\rm (i)]
\item
$S(n,k)=D(n,k)-D(n-1,k)$.
\item
$C(n,k)=D(n,k)+D(n-1,k)$.
\item
$S(n,k)=\sum_{i}\binom{n-1}{i-1}\binom{k}{i}2^i$.
\item
$S(n,k)=\sum_{i}\binom{n+i-1}{k-1}\binom{k}{i}$.
\end{enumerate}
\end{prop}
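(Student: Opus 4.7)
The plan is to derive all four identities directly from the unified framework \eqref{L-D} already established, combined with the two standard Delannoy formulas \eqref{Dnk-2} and \eqref{Dnk}. The key observation is that the proposition is really one computation done twice, and the excerpt has done most of the work in passing to the $L^{(m)}$ family.

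First I would prove (i) and (ii) by specialization. Since $S(n,k)=L^{(0)}(n,k)$ and $C(n,k)=L^{(2)}(n,k)$, setting $m=0$ and $m=2$ in \eqref{L-D} yields
\[
S(n,k)=D(n,k)-D(n-1,k),\qquad C(n,k)=D(n,k)+D(n-1,k)
\]
with essentially no further work. (One only needs the convention $D(n,k)=0$ for $n<0$, which is already in place.)

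Next I would prove (iii) by substituting \eqref{Dnk-2} into (i), collecting the two Delannoy sums on a common index $i$, and simplifying via Pascal's rule $\binom{n}{i}-\binom{n-1}{i}=\binom{n-1}{i-1}$. The same strategy gives (iv): substitute \eqref{Dnk} into (i) and simplify using $\binom{n+i}{k}-\binom{n+i-1}{k}=\binom{n+i-1}{k-1}$. Both steps are purely routine manipulation of binomial sums.

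I do not foresee a genuine obstacle here; the only minor care needed is the edge case $n=0$, where $D(-1,k)=0$ by convention, so that the identities correctly reduce to $S(0,k)=1$, $C(0,k)=1$, and the right-hand sides of (iii)–(iv) collapse to the constant $1$. The whole proof therefore fits comfortably in a few lines once \eqref{L-D}, \eqref{Dnk-2}, and \eqref{Dnk} are in hand.
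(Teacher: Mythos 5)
Your proposal is correct and follows exactly the route the paper intends: the paper states that Proposition \ref{C-N} is ``immediate from \eqref{L-D}, \eqref{Dnk-2} and \eqref{Dnk}'', and your specialization $m=0,2$ followed by Pascal's rule $\binom{n}{i}-\binom{n-1}{i}=\binom{n-1}{i-1}$ (and its analogue for the upper index) is precisely the omitted computation. No gaps.
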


The central Delannoy numbers $D_n=D(n,n)$ are
the main diagonal of the Delannoy matrix $D$.
The first few terms of the sequence $(D_n)_{n\ge 0}$ are
$1,3,13,63,321,\ldots$~\cite[A001850]{Slo}.
Sulanke~\cite{Sul03} listed 29 objects counted by the central Delannoy numbers.
The central Delannoy numbers are closely related to the Jacobi polynomials $P_n^{(\alpha,\beta)}(t)$,
which are a class of classical orthogonal polynomials defined by
\begin{equation}\label{pnx-ee}
P_n^{(\alpha,\beta)}(t)
=\sum_{i=0}^n\binom{n+\alpha}{i}\binom{n+\beta}{n-i}
  \left(\frac{t+1}{2}\right)^{i}\left(\frac{t-1}{2}\right)^{n-i}.
\end{equation}
By \eqref{Dnk-2}, the central Delannoy numbers
$D_n=\sum_{i=0}^n\binom{n}{i}^22^i=P_n^{(0,0)}(3)$.

Let $S_n=S(n,n)$ and $C_n=C(n,n)$
be the main diagonals of the coordination matrices $S$ and $C$ respectively.
Then
$(S_n)_{n\ge 0}=(1,2,8,38,192,\ldots)$ \cite[A123164]{Slo}
and
$(C_n)_{n\ge 0}=(1,4,18,88,450,\ldots)$ \cite[A050146]{Slo}.
It follows from Proposition \ref{C-N} (iii)
that
$$S_n=S(n,n)=\sum_{i=0}^{n}\binom{n-1}{i-1}\binom{n}{i}2^i
=\sum_{i=0}^{n}\binom{n}{i}\binom{n-1}{n-i}2^i
=P_n^{(0,-1)}(3)$$
and
\begin{equation*}\label{Cn-ee}
C_n=C(n,n)=S(n,n+1)=\sum_{i=0}^{n}\binom{n-1}{i-1}\binom{n+1}{i}2^i
=\sum_{i=0}^{n}\binom{n+1}{i}\binom{n-1}{n-i}2^i
=P_n^{(1,-1)}(3).
\end{equation*}

It is well known \cite[Chapter 4]{Sze75} that
the Jacobi polynomials satisfy the recurrence relation
\begin{eqnarray}\label{pnx-rr}
 &&2n(n+\alpha+\beta)(2n+\alpha+\beta-2)P_{n}^{(\alpha,\beta)}(t)\nonumber \\
 &=&(2n+\alpha+\beta-1)\left[(2n+\alpha+\beta)(2n+\alpha+\beta-2)t+\alpha^2-\beta^2\right]P_{n-1}^{(\alpha,\beta)}(t)\nonumber\\
 &&-2(n+\alpha-1)(n+\beta-1)(2n+\alpha+\beta)P_{n-2}^{(\alpha,\beta)}(t)
\end{eqnarray}
with $P_0^{(\alpha,\beta)}(t)=1$ and $P_1^{(\alpha,\beta)}(t)=(\alpha+1)+(\alpha+\beta+2)(t-1)/2$.
Also, the Jacobi polynomials have the generating function
\begin{equation}\label{pnx-gf}
  \sum_{n\ge 0}P_n^{(\alpha,\beta)}(t)x^n
  =2^{\alpha+\beta}R^{-1}(1-x+R)^{-\alpha}(1+x+R)^{-\beta},
\end{equation}
where $R=R(t,x)=\sqrt{1-2tx+x^2}$.

\begin{prop}\label{SC-p}
We have
\begin{enumerate}[\rm (i)]
  \item
  $nD_n=3(2n-1)D_{n-1}-(n-1)D_{n-2}$
  with $D_0=1$ and $D_1=3$.
  \item
  $n(2n-3)S_{n}=4(3n^2-6n+2)S_{n-1}-(n-2)(2n-1)S_{n-2}$
  with $S_0=1$ and $S_1=2$.
  \item
  $n(n-1)C_n=3(2n-1)(n-1)C_{n-1}-n(n-2)C_{n-2}$
  with $C_0=1$ and $C_1=4$.
  \item
  $\sum_{n\ge 0}D_nx^n=\dfrac{1}{\sqrt{1-6x+x^2}}$.
  \item
  $\sum_{n\ge 0}S_nx^n=\dfrac{1+x+\sqrt{1-6x+x^2}}{2\sqrt{1-6x+x^2}}$.
  \item
  $\sum_{n\ge 0}C_nx^n=\dfrac{3-x-\sqrt{1-6x+x^2}}{2\sqrt{1-6x+x^2}}$.
\end{enumerate}
\end{prop}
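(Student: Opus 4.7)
The plan is to derive all six statements from the three identifications established in the preceding discussion,
\[
D_n=P_n^{(0,0)}(3),\qquad S_n=P_n^{(0,-1)}(3),\qquad C_n=P_n^{(1,-1)}(3),
\]
together with the Jacobi recurrence \eqref{pnx-rr} (for parts (i)--(iii)) and the Jacobi generating function \eqref{pnx-gf} (for parts (iv)--(vi)), the latter evaluated at $t=3$ with $R=R(3,x)=\sqrt{1-6x+x^2}$.

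For (i)--(iii), the plan is to substitute $t=3$ together with $(\alpha,\beta)=(0,0)$, $(0,-1)$, $(1,-1)$ respectively into \eqref{pnx-rr} and then cancel common factors. For instance, in case (ii) the coefficient of $P_{n-1}^{(0,-1)}(3)$ becomes $(2n-2)[3(2n-1)(2n-3)-1]$, where the bracket simplifies via $3(2n-1)(2n-3)-1=12n^2-24n+8=4(3n^2-6n+2)$; dividing the resulting identity by $2(n-1)$ yields the stated recurrence. Cases (i) and (iii) are analogous: in (i) one removes the common factor $4n(n-1)$, and in (iii) the relations $\alpha+\beta=0$ and $\alpha^2=\beta^2$ collapse the middle coefficient immediately, after which one divides by $4n$. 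The initial values $D_0,D_1,S_0,S_1,C_0,C_1$ are read off directly from the definitions (or from the first entries of $\d$ and $\c$).

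For (iv)--(vi), substituting the appropriate $(\alpha,\beta)$ into \eqref{pnx-gf} at $t=3$ delivers (iv) $\sum_{n\ge 0}D_n x^n=1/R$ and (v) $\sum_{n\ge 0}S_n x^n=(1+x+R)/(2R)$ at once. Part (vi) initially produces $\sum_{n\ge 0}C_n x^n=(1+x+R)/[R(1-x+R)]$, which is not yet in the form claimed.

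The main obstacle is this last simplification. My plan is to exploit the identity $(1-x+R)(1-x-R)=(1-x)^2-R^2=4x$: multiplying numerator and denominator by $1-x-R$ turns the numerator into $(1+x+R)(1-x-R)=1-(x+R)^2=-2x^2+6x-2xR=2x(3-x-R)$ and the denominator into $4xR$, leaving $(3-x-R)/(2R)$, which matches the statement of (vi).
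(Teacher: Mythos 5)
Your proposal is correct and matches the paper's (implicit) intended argument exactly: the proposition is stated immediately after the identifications $D_n=P_n^{(0,0)}(3)$, $S_n=P_n^{(0,-1)}(3)$, $C_n=P_n^{(1,-1)}(3)$ together with \eqref{pnx-rr} and \eqref{pnx-gf}, so specializing those two facts is precisely the proof the authors have in mind. Your parameter substitutions, the cancellations of $4n(n-1)$, $2(n-1)$, $4n$, and the rationalization via $(1-x+R)(1-x-R)=4x$ in part (vi) all check out.
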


\begin{rem}
From the generating functions of $(S_n)_{n\ge 0}$ and $(D_n)_{n\ge 0}$,
it is easy to see that
\begin{equation}\label{s-d-d}
  S_n=\frac{1}{2}(D_n+D_{n-1})
\end{equation}
for $n\ge 1$.
\end{rem}

\begin{rem}
The coordination number $C_n$ is closely related to the large Schr\"oder number $r_n$,
which counts the number of subdiagonal paths from $(0,0)$ to $(n,n)$
consisting of steps $(1,0),(0, 1)$ and $(1,1)$.
The first few terms of the sequence $(r_n)_{n\ge 0}$ are $1,2,6,22,90,\ldots$ \cite[A006318]{Slo}.
It is well known \cite{SS91} that the large Schr\"oder numbers have
the generating function
\begin{equation*}\label{rn-gf}
\sum_{n\ge 0}r_nx^n=\frac{1-x-\sqrt{1-6x+x^2}}{2x}.
\end{equation*}
Let $r(x)$ and $C(x)$ be the generating functions of $(r_n)_{n\ge 0}$ and $(C_n)_{n\ge 0}$ respectively.
It is not difficult to check that $[xr(x)]'=C(x)$.
Hence
\begin{equation}\label{c-r}
C_n=(n+1)r_n.
\end{equation}
In other words,
the coordination numbers $C_n$ bear the same relation to the large Schr\"oder numbers
as the central binomial coefficients $\binom{2n}{n}$ do to the Catalan numbers $\frac{1}{n+1}\binom{2n}{n}$,
the so-called Chung-Feller property.
An interesting problem is to find out a combinatorial interpretation for \eqref{c-r}.
\end{rem}

\subsection{Coordination matrices as Riordan arrays}

Let $f(x)=\sum_{n\ge 0}f_nx^n$ and $g(x)=\sum_{n\ge 0}g_nx^n$
be two formal power series.
A {\it Riordan array},
denoted by $\R(g(x),f(x))$,
is an infinite matrix whose generating function of the $k$th column is $g(x)f^k(x)$ for $k\ge 0$:
$$\R(g(x),f(x))=\M[g(x),g(x)f(x),g(x)f^2(x),\ldots].$$
Riordan arrays play an important unifying role in enumerative combinatorics
\cite{SGWW91,Spr94}.

\begin{exm}
\begin{enumerate}[\rm (i)]
  \item
  The Pascal square $P=\R\left(\frac{1}{1-x},\frac{1}{1-x}\right)$
  and the Pascal triangle $\p=\R\left(\frac{1}{1-x},\frac{x}{1-x}\right)$.
  \item
  Let $g(x)=\sum_{n\ge 0}g_nx^n$.
  Then the Toeplitz matrix
  $$T(g)=[g_{i-j}]_{i,j\ge 0}
  =\left(
  \begin{array}{ccccc}
    g_0 & 0 & 0 & 0 & \cdots \\
    g_1 & g_0 & 0 & 0 &  \\
    g_2 & g_1 & g_0 & 0 &  \\
    g_3 & g_2 & g_1 & g_0 &  \\
    \vdots &  &  &  & \ddots \\
  \end{array}
\right)$$
of the sequence $(g_n)_{n\ge 0}$ is a Riordan array: $T(g)=\R(g(x),x)$.
\end{enumerate}
\end{exm}

Let $L^{(m)}=[L^{(m)}(n,k)]_{n,k\ge 0}$ and
let $L_k^{(m)}(x)=\sum_{n\ge 0}L^{(m)}(n,k)x^n$ be the $k$th column generating function of the matrix $L^{(m)}$.
Then by \eqref{Lxy}, we have
\begin{equation*}\label{Lnx}
  L_k^{(m)}(x)=[y^k]L^{(m)}(x,y)
  =\frac{1+(m-1)x}{1-x}\cdot\left(\frac{1+x}{1-x}\right)^k.
\end{equation*}
Thus $L^{(m)}$ is a Riordan array:
\begin{equation*}\label{L-RA}
L^{(m)}=\R\left(\frac{1+(m-1)x}{1-x},\frac{1+x}{1-x}\right).
\end{equation*}
The corresponding lower triangular matrix $\widehat{L^{(m)}}=[L^{(m)}(n-k,k)]_{n,k\ge 0}$
is also a Riordan array:
\begin{equation*}\label{LT-RA}
\widehat{L^{(m)}}=\R\left(\frac{1+(m-1)x}{1-x},\frac{x(1+x)}{1-x}\right).
\end{equation*}
In particular,
the Delannoy matrix $D=\R\left(\frac{1}{1-x},\frac{1+x}{1-x}\right)$
and the Delannoy triangle $\d=\R\left(\frac{1}{1-x},\frac{x(1+x)}{1-x}\right)$.

\begin{prop}\label{SC-R}
The coordination matrices $S,\s,C,\c$ are all Riordan arrays:
\begin{enumerate}[\rm (i)]
  \item
  $S=\R\left(1,\frac{1+x}{1-x}\right)$ and
  $\s=\R\left(1,\frac{x(1+x)}{1-x}\right)$.
  \item
  $C=\R\left(\frac{1+x}{1-x},\frac{1+x}{1-x}\right)$ and
  $\c=\R\left(\frac{1+x}{1-x},\frac{x(1+x)}{1-x}\right)$.
\end{enumerate}
\end{prop}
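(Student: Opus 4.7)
The proof will be a direct verification: for each of the four matrices, I read off the generating function of its $k$th column from the definition, then recognize the resulting family as a Riordan array of the stated form.

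First, I recall from the introduction that $S_k(x)=\left(\frac{1+x}{1-x}\right)^k$. Plugging this into the definition $S=\M[S_0(x),S_1(x),S_2(x),\ldots]$ shows that the $k$th column of $S$ has generating function $\left(\frac{1+x}{1-x}\right)^k=1\cdot\left(\frac{1+x}{1-x}\right)^k$, which is precisely the $k$th column of $\R\!\left(1,\frac{1+x}{1-x}\right)$. For $\s=\M[S_0(x),xS_1(x),x^2S_2(x),\ldots]$, the $k$th column has generating function $x^k\left(\frac{1+x}{1-x}\right)^k=\left(\frac{x(1+x)}{1-x}\right)^k$, matching $\R\!\left(1,\frac{x(1+x)}{1-x}\right)$. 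This establishes (i).

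For (ii), I use the same observation together with the shift inherent in the definitions of $C$ and $\c$. Since the $k$th column of $C=\M[S_1(x),S_2(x),S_3(x),\ldots]$ has generating function $S_{k+1}(x)=\frac{1+x}{1-x}\cdot\left(\frac{1+x}{1-x}\right)^k$, the factor $\frac{1+x}{1-x}$ plays the role of $g(x)$ and the factor $\frac{1+x}{1-x}$ plays the role of $f(x)$ in the Riordan definition, giving $C=\R\!\left(\frac{1+x}{1-x},\frac{1+x}{1-x}\right)$. Similarly, the $k$th column of $\c=\M[S_1(x),xS_2(x),x^2S_3(x),\ldots]$ has generating function $x^kS_{k+1}(x)=\frac{1+x}{1-x}\cdot\left(\frac{x(1+x)}{1-x}\right)^k$, yielding $\c=\R\!\left(\frac{1+x}{1-x},\frac{x(1+x)}{1-x}\right)$.

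There is essentially no obstacle: the entire argument consists of reading the column generating functions off the given definitions and grouping them in the form $g(x)f(x)^k$. The only item to keep track of is the bookkeeping between the plain matrices ($S,C$) and the triangular ones ($\s,\c$), where the extra factor $x^k$ gets absorbed into $f(x)$ to produce $\frac{x(1+x)}{1-x}$ in place of $\frac{1+x}{1-x}$. Accordingly, the write-up can be very short, most likely a single display per item followed by identification of $g$ and $f$.
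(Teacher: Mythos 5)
Your verification is correct and matches the paper's approach: the paper obtains the same result by computing the column generating functions $L_k^{(m)}(x)=\frac{1+(m-1)x}{1-x}\left(\frac{1+x}{1-x}\right)^k$ and specializing to $m=0$ and $m=2$, which is exactly the identification of $g(x)f(x)^k$ you carry out directly from $S_k(x)=\left(\frac{1+x}{1-x}\right)^k$. The bookkeeping with the extra factor $x^k$ for the triangular versions is handled the same way in both.
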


We say that a Riordan array $\R(g(x),f(x))$ is {\it proper}
if $g_0=1, f_0=0$ and $f_1\neq 0$.
In this case,
$\R(g(x),f(x))$ is an infinite lower triangular matrix.
It is well known \cite{SGWW91} that the set of proper Riordan arrays is a group under the matrix multiplication and
\begin{eqnarray*}\label{rp}
\R(d(x),h(x))\cdot\R(g(x),f(x))=\R(d(x)g(h(x)),f(h(x))).
\end{eqnarray*}
The identity matrix can be written as $\R(1,x)$
and the inverse of $\R(g(x),f(x))$ is given by
$$\R(1/g(\bar{f}(x)),\bar{f}(x)),$$
where $\bar{f}$ is the compositional inverse of $f$,
i.e., $f(\bar{f}(x))=\bar{f}(f(x))=x$.

It is also well known \cite{HS09} that a proper Riordan array $R=[R_{n,k}]_{n,k\ge 0}$
can be characterized by two sequences
$(a_n)_{n\ge 0}$ and $(z_n)_{n\ge 0}$ such that
\begin{equation}\label{rrr-c}
R_{0,0}=1,\quad R_{n+1,0}=\sum_{j\ge 0}z_jR_{n,j},\quad R_{n+1,k+1}=\sum_{j\ge 0}a_jR_{n,k+j}
\end{equation}
for $n,k\ge 0$.
Let $A(x)=\sum_{n\ge 0}a_nx^n$ and $Z(x)=\sum_{n\ge 0}z_nx^n$.
Then
\begin{equation*}\label{gf-az}
g(x)=\frac{1}{1-xZ(f(x))},\quad f(x)=xA(f(x)).
\end{equation*}

By means of the standard techniques in the theory of Riordan arrays,
the $A$- and $Z$- sequences of the coordination triangle $\c$ can be decided by
\begin{equation*}\label{c-ax}
A(x)=\frac{1+x+\sqrt{1+6x+x^2}}{2}=1+2x-2x^2+6x^3-22x^4+90x^5-\cdots 
\end{equation*}
and
\begin{equation*}\label{c-zx}
Z(x)=\frac{-1+x+\sqrt{1+6x+x^2}}{2x}=2-2x+6x^2-22x^3+90x^4-\cdots.
\end{equation*}
In other words, $a_{n+1}=z_{n}=(-1)^{n}r_{n}$ for $n\ge 1$,
where $r_n$ are the large Schr\"oder numbers.
Also, the inverse of $\c$ is still a Riordan array:
$$\c^{-1}=\R(r(-x),xr(-x))
=\left(\begin{array}{rrrrrr}
  1 &  &  &  &  &  \\
  -2 & 1 &  &  &  &  \\
  6 & -4 & 1 &  &  &  \\
  -22 & 16 & -6 & 1 &  &  \\
  90 & -68 & 30 & -8 & 1 &  \\
  \vdots &  &  &  &  & \ddots \\
 \end{array}\right).
$$
The leftmost column in $\c^{-1}$ precisely consists of the signed large Schr\"oder numbers $(-1)^nr_n$.

\subsection{Decomposition of coordination matrices}

Let $[R_{n,k}]_{n\ge k\ge 0}$ be a proper Riordan array
with the $A$- and $Z$- sequences $(a_n)_{n\ge 0}$ and $(z_n)_{n\ge 0}$ respectively.
Then \eqref{rrr-c} is equivalent to
$$
\left(
  \begin{array}{ccccc}
    R_{10} & R_{11} &  &  &  \\
    R_{20} & R_{21} & R_{22} &  &  \\
    R_{30} & R_{31} & R_{32} & R_{33} &  \\
    \vdots &  & \cdots &  & \ddots \\
  \end{array}
\right)
=
\left(
  \begin{array}{cccc}
    R_{00} &&&\\
    R_{10} & R_{11} &  &  \\
    R_{20} & R_{21} & R_{22} &   \\
    \vdots &  &    & \ddots \\
  \end{array}
\right)\cdot
\left(
  \begin{array}{ccccc}
    z_0 & a_0 &  &  &  \\
    z_1 & a_1 & a_0 &  &  \\
    z_2 & a_2 & a_1 & a_0 &  \\
    \vdots & \vdots &  &  & \ddots \\
  \end{array}
\right).$$
The rightmost matrix
$\M[Z(x),A(x),xA(x),x^2A(x),\ldots]$
is sometimes called the {\it product matrix} of the Riordan array.

Let $R=\R(g(x),f(x))$ be a Riordan array
with $g(x)=\sum_{n\ge 0}g_nx^n$ and $f(x)=\sum_{n\ge 0}f_nx^n$.
Denote
\begin{equation*}\label{lpm}
\L(g(x),f(x))=
\left(
  \begin{array}{ccccc}
    g_0 & f_0 &  &  &  \\
    g_1 & f_1 & f_0 &  &  \\
    g_2 & f_2 & f_1 & f_0 &  \\
    \vdots & \vdots &  &  & \ddots \\
  \end{array}
\right)
\end{equation*}
and
$$\r=
\left(\begin{array}{cc}
1 & 0 \\
0 & R \\
\end{array}\right).
$$
We call $\L(g(x),f(x))$ the {\it left product matrix} of the Riordan array $\R(g(x),f(x))$.

\begin{prop}\label{lp}
Let $R^L=\L(g(x),f(x))$ be the left product matrix of the Riordan array $R=\R(g(x),f(x))$.
Then
$$R=R^L\cdot\r.$$
\end{prop}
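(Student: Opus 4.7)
The plan is to verify the identity $R = R^L \cdot \hr$ column by column, using the definitions of $R^L$ and $\hr$ together with the Riordan array structure $R_{n,k} = [x^n]\,g(x)f(x)^k$.

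First I would handle column $0$. By the block form of $\hr$, its zeroth column is $(1,0,0,\ldots)^{\top}$, so column $0$ of $R^L \cdot \hr$ is just column $0$ of $R^L$, which by the definition of $\L(g(x),f(x))$ is $(g_0, g_1, g_2, \ldots)^{\top}$. This agrees with column $0$ of $R$, whose generating function is $g(x)$.

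For $k \ge 0$, I would then compute column $k+1$ of $R^L \cdot \hr$. Column $k+1$ of $\hr$ is obtained from column $k$ of $R$ by prepending a zero, namely $(0, R_{0,k}, R_{1,k}, \ldots)^{\top}$. For $j \ge 1$ the $j$-th column of $R^L$ has generating function $x^{j-1}f(x)$, i.e., $R^L_{n,j}=f_{n-j+1}$ with the convention $f_m=0$ for $m<0$. Substituting into the matrix product and reindexing $i=j-1$ yields
\begin{equation*}
(R^L \hr)_{n,k+1}=\sum_{j\ge 1} R^L_{n,j}\,R_{j-1,k}=\sum_{i\ge 0} f_{n-i}\,R_{i,k},
\end{equation*}
which is the coefficient of $x^n$ in $f(x)\cdot g(x)f(x)^k = g(x)f(x)^{k+1}$. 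This is precisely the generating function of column $k+1$ of $R$, completing the verification.

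There is no real obstacle beyond index bookkeeping: prepending a zero row/column to $R$ in forming $\hr$ and using the shifted columns $x^{j-1}f(x)$ in $R^L$ introduce two offsets that must cancel cleanly to give the convolution $f(x)\cdot g(x)f(x)^k$. Once this is set up carefully, the proof reduces to a one-line coefficient comparison.
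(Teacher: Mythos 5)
Your proof is correct and is essentially the paper's argument: the paper observes that $R_{k+1}=T\cdot R_k$ where $T=[f_{i-j}]$ is the Toeplitz matrix forming columns $1,2,\ldots$ of $R^L$, and assembles $R=(R_0,TR)=(R_0,T)\cdot\r$, which is exactly the column-by-column convolution identity $\sum_{i\ge 0}f_{n-i}R_{i,k}=[x^n]\,g(x)f(x)^{k+1}$ that you verify entrywise. The only difference is that you carry out the index bookkeeping explicitly rather than packaging it as multiplication by $T$.
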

\begin{proof}
Let $R_k$ denote the $k$th column of $R$ and let $R_k(x)$ be the generating function of $R_k$.
Then $R_0(x)=g(x)$ and $R_{k+1}(x)=f(x)R_k(x)$.
Let $T=[f_{i-j}]$ be the Toeplitz matrix of the sequence $(f_n)_{n\ge 0}$.
Note that $R_{k+1}(x)=f(x)R_k(x)$ is equivalent to $R_{k+1}=T\cdot R_k$.
Hence
$$(R_1,R_2,R_3,\ldots)=(TR_0,TR_1,TR_2,\ldots)=T(R_0,R_1,R_2,\ldots)=TR.$$
Denote $\overline{R}=(R_1,R_2,R_3,\ldots)$. Then
$$R=(R_0,R_1,R_2,\ldots)=(R_0,\overline{R})=(R_0,TR)=
(R_0,T)\cdot\left(\begin{array}{cc}
1 & 0 \\
0 & R \\
\end{array}\right)
=R^L\cdot\r.$$
The proof is therefore complete.
\end{proof}

\begin{exm}\label{Cc-doc}
The coordination matrix $C=\R\left(\frac{1+x}{1-x},\frac{1+x}{1-x}\right)$
and the coordination triangle $\c=\R\left(\frac{1+x}{1-x},\frac{x(1+x)}{1-x}\right)$
have the decomposition:
\begin{equation*}\label{C-doc}
C=
\left(
  \begin{array}{ccccc}
    1 & 1 & 1 & 1 & \cdots \\
    2 & 4 & 6 & 8 &  \\
    2 & 8 & 18 & 32 &  \\
    2 & 12 & 38 & 88 &  \\
    \vdots &  &  &  & \ddots \\
  \end{array}
\right)
=
\left(
  \begin{array}{cccccc}
    1 & 1 &  &  & & \\
    2 & 2 & 1 &  & & \\
    2 & 2 & 2 & 1 & & \\
    2 & 2 & 2 & 2 & 1 &\\
    \vdots & \vdots & & &  & \ddots \\
  \end{array}
\right)\cdot
\left(
  \begin{array}{ccccc}
    1 & 0 & 0 & 0 &\cdots \\
    0 & 1 & 1 & 1 & \cdots \\
    0 & 2 & 4 & 6 &  \\
    0 & 2 & 8 & 18 &  \\
    \vdots & \vdots &  &  & \ddots \\
  \end{array}
\right)
\end{equation*}
and
\begin{equation*}\label{c-doc}
\c=\left(
  \begin{array}{ccccc}
    1 &  &  &  & \\
    2 & 1 &  &  &  \\
    2 & 4 & 1 &  &  \\
    2 & 8 & 6 & 1 &  \\
    \vdots &  &  &  & \ddots \\
  \end{array}
\right)
=
\left(
  \begin{array}{ccccc}
    1 &  &  &  & \\
    2 & 1 &  &  &  \\
    2 & 2 & 1 &  &  \\
    2 & 2 & 2 & 1 &  \\
    \vdots &  &  &  & \ddots \\
  \end{array}
\right)\cdot
\left(
  \begin{array}{ccccc}
    1 & 0 & 0 & 0 &\cdots \\
    0 & 1 & 0 & 0 & \cdots \\
    0 & 2 & 1 & 0 &  \\
    0 & 2 & 4 & 1 &  \\
    \vdots & \vdots &  &  & \ddots \\
  \end{array}
\right).
\end{equation*}
\end{exm}

We next consider the lower-diagonal-upper (LDU)
decomposition of coordination matrices $S$ and $C$.
Recall that the Delannoy numbers $D(n,k)=\sum_{i}\binom{n}{i}\binom{k}{i}2^i$.
Hence the Delannoy matrix $D=[D(n,k)]$ has the LDU decomposition:
$$D=\p\2p\p^t,$$
where $\2p$ is the diagonal matrix $\mathrm{diag}(1,2,2^2,2^3,\ldots)$,
$\p$ is the Pascal triangle and $\p^t$ is its transpose.
On the other hand,
$$S(n,k)=D(n,k)-D(n-1,k)=\sum_{i}\binom{n-1}{i-1}\binom{k}{i}2^i$$
and
$$C(n,k)=D(n,k)+D(n-1,k)=\sum_{i}\left[\binom{n}{i}+\binom{n-1}{i}\right]\binom{k}{i}2^i.$$

\begin{prop}\label{SC-doc}
We have the LDU decompositions:
$S=L_S\2p\p^t$ and $C=L_C\2p\p^t$, where
$$
L_S=\left[\binom{n-1}{k-1}\right]_{n\ge k\ge 0}
=\left(
  \begin{array}{ccccc}
    1 &  &  &  & \\
    0 & 1 &  &  & \\
    0 & 1 & 1 &  &  \\
    0 & 1 & 2 & 1 &  \\
    \vdots & \vdots &  &  & \ddots \\
  \end{array}
\right)
=\R\left(1,\frac{x}{1-x}\right)
$$
and
$$
L_C=\left[\binom{n}{k}+\binom{n-1}{k}\right]_{n\ge k\ge 0}
=
\left(
  \begin{array}{ccccc}
    1 &  &  &  &  \\
    2 & 1 &  &  &  \\
    2 & 3 & 1 &  &  \\
    2 & 5 & 4 & 1 &  \\
    \vdots &  &  &  & \ddots \\
  \end{array}
\right)
=\R\left(\frac{1+x}{1-x},\frac{x}{1-x}\right).
$$
\end{prop}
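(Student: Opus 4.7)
The plan is to derive the two decompositions from the classical Delannoy LDU identity $D=\p\2p\p^t$ stated just above the proposition, combined with parts (i) and (ii) of Proposition \ref{C-N}, which assert the matrix-level relations $S(n,k)=D(n,k)-D(n-1,k)$ and $C(n,k)=D(n,k)+D(n-1,k)$. Let $E$ denote the infinite down-shift operator, $(EA)_{n,k}=A_{n-1,k}$ with the convention $A_{-1,k}=0$. Then those two relations read $S=(I-E)D$ and $C=(I+E)D$, so substituting $D=\p\2p\p^t$ yields
\begin{equation*}
S=(I-E)\p\cdot\2p\p^t,\qquad C=(I+E)\p\cdot\2p\p^t.
\end{equation*}
It therefore suffices to identify $(I-E)\p$ with $L_S$ and $(I+E)\p$ with $L_C$.

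The $(n,k)$ entry of $(I\pm E)\p$ is $\binom{n}{k}\pm\binom{n-1}{k}$. The plus case is already the displayed formula defining $L_C$. For the minus case, Pascal's recurrence gives $\binom{n}{k}-\binom{n-1}{k}=\binom{n-1}{k-1}$ whenever $n\ge 1$, while for $n=0$ it yields $\delta_{k,0}$; together these match the displayed formula for $L_S$.

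It remains to verify the Riordan-array descriptions by computing column generating functions. The $k$th column of $L_S$ (for $k\ge 1$) has generating function
\begin{equation*}
\sum_{n\ge 1}\binom{n-1}{k-1}x^n=x\cdot\frac{x^{k-1}}{(1-x)^k}=\left(\frac{x}{1-x}\right)^k,
\end{equation*}
and the $0$th column is $(1,0,0,\ldots)^t$, confirming $L_S=\R(1,x/(1-x))$. For $L_C$ the $k$th column generating function is
\begin{equation*}
\sum_{n\ge 0}\left[\binom{n}{k}+\binom{n-1}{k}\right]x^n=\frac{x^k(1+x)}{(1-x)^{k+1}}=\frac{1+x}{1-x}\left(\frac{x}{1-x}\right)^k,
\end{equation*}
confirming $L_C=\R((1+x)/(1-x),x/(1-x))$. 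There is no real obstacle in the proof; it is essentially structural bookkeeping atop $D=\p\2p\p^t$, with the only point requiring mild attention being the $n=0$ boundary row of $L_S$, which is absorbed into the shift-operator convention.
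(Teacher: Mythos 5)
Your proof is correct and follows essentially the same route as the paper: the paper derives the decompositions from $D=\p\2p\p^t$ together with $S(n,k)=D(n,k)-D(n-1,k)$ and $C(n,k)=D(n,k)+D(n-1,k)$, reading off the entries $\binom{n-1}{k-1}$ and $\binom{n}{k}+\binom{n-1}{k}$ exactly as you do, only written entrywise rather than via your shift-operator factorization $(I\pm E)\p$. Your added verification of the Riordan-array forms by column generating functions is a routine check and matches the stated identities.
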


\begin{coro}
$\det [S(i,j)]_{0\le i,j\le n}
=\det [C(i,j)]_{0\le i,j\le n}
=2^{n(n+1)/2}.$
\end{coro}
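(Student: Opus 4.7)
The plan is to derive the corollary directly from the LDU decompositions in Proposition \ref{SC-doc}. First I would record that both factorizations $S=L_S\2p\p^t$ and $C=L_C\2p\p^t$ involve a lower-triangular factor ($L_S$ or $L_C$), the diagonal $\2p=\mathrm{diag}(1,2,2^2,\ldots)$, and the upper-triangular factor $\p^t$. Since all three factors are triangular of compatible type, truncation to the top-left $(n+1)\times(n+1)$ block commutes with the matrix product, so
\[
[S(i,j)]_{0\le i,j\le n}=(L_S)_n\,\2p_n\,(\p^t)_n,\qquad [C(i,j)]_{0\le i,j\le n}=(L_C)_n\,\2p_n\,(\p^t)_n,
\]
where the subscript $n$ denotes the truncation.

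Next I would check the diagonal entries of each factor. For $L_S=[\binom{n-1}{k-1}]$ one has $\binom{n-1}{n-1}=1$ for $n\ge 1$ and the $(0,0)$-entry is $1$, so $\det (L_S)_n=1$. For $L_C=[\binom{n}{k}+\binom{n-1}{k}]$ the diagonal entry is $\binom{n}{n}+\binom{n-1}{n}=1+0=1$, so $\det (L_C)_n=1$. The Pascal triangle $\p$ has $1$'s on its diagonal, hence so does $\p^t$, giving $\det(\p^t)_n=1$. Finally
\[
\det \2p_n=\prod_{i=0}^{n}2^{i}=2^{n(n+1)/2}.
\]

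Multiplying the three determinants yields $\det[S(i,j)]_{0\le i,j\le n}=\det[C(i,j)]_{0\le i,j\le n}=2^{n(n+1)/2}$, which is the claim. There is essentially no obstacle here: the only point requiring a moment's thought is the observation that for triangular matrices the truncation of a product is the product of the truncations, which is immediate from the fact that lower-triangular (resp.\ upper-triangular, diagonal) entries above (resp.\ below, off) the leading $(n+1)\times(n+1)$ block do not contribute to that block.
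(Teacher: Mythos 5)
Your argument is correct and is exactly the route the paper intends: the corollary is stated as an immediate consequence of the LDU decompositions $S=L_S\2p\p^t$ and $C=L_C\2p\p^t$ of Proposition \ref{SC-doc}, with the unitriangular factors contributing determinant $1$ and the diagonal factor contributing $2^{n(n+1)/2}$. Your remark that truncation commutes with the product for lower-triangular, diagonal, and upper-triangular factors is the (implicit) justification the paper relies on as well.
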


\subsection{Coordination polynomials}

Define $\lm{m}(n,k)=L^{(m)}(n-k,k)$
and $\lmn{m}{n}(x)=\sum_{k=0}^n\lm{m}(n,k)x^k$.
Then by \eqref{Lnk-rr},
$$\lm{m}(n,k)=\lm{m}(n-1,k)+\lm{m}(n-1,k-1)+\lm{m}(n-2,k-1).$$
It follows that
\begin{equation}\label{lnx-rr}
\lmn{m}{n}(x)=(x+1)\lmn{m}{n-1}(x)+x\lmn{m}{n-2}(x)
\end{equation}
with $\lmn{m}{0}(x)=1$ and $\lmn{m}{1}(x)=x+m$.

Solve the recurrence relation \eqref{lnx-rr} by means of standard combinatorial techniques
to obtain the Binet form
\begin{equation}\label{lnx-b}
  \lmn{m}{n}(x)=\frac{\la_1+m-1}{\la_1-\la_2}\la_1^{n}-\frac{\la_2+m-1}{\la_1-\la_2}\la_2^{n},
\end{equation}
where
\begin{equation*}\label{la12}
\la_{1,2}=\frac{1+x\pm\sqrt{x^2+6x+1}}{2}
\end{equation*}
are the roots of the characteristic equation $\la^2-(x+1)\la-x=0$.

Clearly,
the Delannoy polynomials $d_n(x)=\lmn{1}{n}(x)$,
the coordination polynomials $c_n(x)=\lmn{2}{n}(x)$ and $s_n(x)=\lmn{0}{n}(x)$.
Hence
\begin{equation*}\label{dnx-b}
  d_n(x)=\frac{\la_1^{n+1}-\la_2^{n+1}}{\la_1-\la_2}
\end{equation*}
with $d_0(x)=1$ and $d_1(x)=x+1$.
Thus by \eqref{lnx-b}, we have
\begin{equation}\label{lnx-dnx}
\lmn{m}{n}(x)=d_n(x)+(m-1)d_{n-1}(x),\quad n=1,2,\ldots.
\end{equation}
In particular,
\begin{equation}\label{cnx-dnx}
c_n(x)=d_n(x)+d_{n-1}(x)
\end{equation}
and
\begin{equation}\label{snx-dnx}
s_n(x)=d_n(x)-d_{n-1}(x)
\end{equation}
for $n\ge 1$.
On the other hand,
$s_n(x)=xc_{n-1}(x)$ for $n\ge 1$.
It follows from \eqref{cnx-dnx} and \eqref{snx-dnx} that
\begin{equation}\label{cd-rr}
\left\{
  \begin{array}{ll}
    c_n(x)=xc_{n-1}(x)+2d_{n-1}(x), & \hbox{$c_0(x)=1$;} \\
    d_n(x)=xc_{n-1}(x)+d_{n-1}(x), & \hbox{$d_0(x)=1$.}
  \end{array}
\right.
\end{equation}

\begin{prop}\label{cnx}
The coordination polynomials $c_n(x)$ satisfy the recurrence relation
\begin{equation*}\label{cnx-rr-0}
c_n(x)=(x+1)c_{n-1}(x)+xc_{n-2}(x)
\end{equation*}
with $c_0(x)=1$ and $c_1(x)=x+2$,
and have the Binet form
\begin{equation*}\label{cnx-bf}
c_n(x)=\frac{\la_1+1}{\la_1-\la_2}\la_1^{n}-\frac{\la_2+1}{\la_1-\la_2}\la_2^{n}.
\end{equation*}
\end{prop}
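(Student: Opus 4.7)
The plan is to observe that Proposition \ref{cnx} is essentially an immediate specialization of the general framework developed earlier for the polynomials $\lmn{m}{n}(x)$. Since the paper has already identified $c_n(x) = \lmn{2}{n}(x)$, both assertions should follow by substituting $m=2$ into the corresponding general identities.

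First I would record the recurrence by plugging $m=2$ into the general recursion \eqref{lnx-rr}, namely
\begin{equation*}
\lmn{m}{n}(x) = (x+1)\lmn{m}{n-1}(x) + x\lmn{m}{n-2}(x),
\end{equation*}
with the initial data $\lmn{m}{0}(x) = 1$ and $\lmn{m}{1}(x) = x+m$. Since $c_n(x) = \lmn{2}{n}(x)$, the recurrence becomes $c_n(x) = (x+1)c_{n-1}(x) + xc_{n-2}(x)$ with $c_0(x) = 1$ and $c_1(x) = x+2$, as claimed. If one prefers a self-contained check, the same recurrence can be derived directly from the coupled system \eqref{cd-rr}: substituting $2d_{n-1}(x) = c_n(x) - xc_{n-1}(x)$ into the recursion $d_n(x) = xc_{n-1}(x) + d_{n-1}(x)$ and shifting indices yields the sought second-order recursion purely in $c_n(x)$.

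For the Binet form, I would specialize formula \eqref{lnx-b} with $m=2$, giving
\begin{equation*}
c_n(x) = \frac{\lambda_1+1}{\lambda_1-\lambda_2}\lambda_1^n - \frac{\lambda_2+1}{\lambda_1-\lambda_2}\lambda_2^n,
\end{equation*}
where $\lambda_{1,2} = (1+x \pm \sqrt{x^2+6x+1})/2$ are the roots of $\lambda^2 - (x+1)\lambda - x = 0$. Alternatively, one can verify this independently: the right-hand side visibly satisfies the recurrence (since $\lambda_1^n, \lambda_2^n$ are solutions of the characteristic equation), and a direct computation checks that it returns the correct values at $n = 0$ and $n = 1$, namely $1$ and $x+2$.

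There is no substantial obstacle here: both conclusions are consequences of specializing established identities at $m=2$, and the only verification needed is a brief sanity check of the initial conditions.
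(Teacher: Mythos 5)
Your proof is correct and follows the paper's own route: the paper presents Proposition \ref{cnx} as an immediate specialization of the general recurrence \eqref{lnx-rr} and Binet form \eqref{lnx-b} at $m=2$, using the identification $c_n(x)=\lmn{2}{n}(x)$, exactly as you do. The extra sanity checks you mention are fine but not needed beyond verifying the initial values $c_0(x)=1$ and $c_1(x)=x+2$.
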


\begin{rem}
  Both row sums $c_n(1)$ and $d_n(1)$ of $\c$ and $\d$ satisfy the recurrence relation
  $u_n=2u_{n-1}+u_{n-2}$.
  It is not difficult to obtain $c_n(1)$ and $d_n(1)$ are precisely the $n$th partial numerator and
  denominator of the continued fraction
  $$\sqrt{2}=1+\frac{1}{2+}\;\frac{1}{2+}\;\frac{1}{2+}\;\frac{1}{2+}\;\cdots,$$
  i.e., $\lim_{n\rightarrow +\infty}c_n(1)/d_n(1)=\sqrt{2}$.
\end{rem}

\section{Analytic properties of coordination numbers}

In this section we investigate analytic properties of coordination numbers,
including the total positivity of coordination matrices,
the distribution of zeros of the coordination polynomials,
the asymptotic normality of the coefficients of the coordination polynomials,
the log-concavity and the log-convexity of the coordination numbers.

\subsection{Total positivity of coordination matrices}

Let $A$ be a (finite or infinite) matrix of real numbers.
Following Pinkus \cite{Pin10},
we say that the matrix $A$ is {\it totally positive}
(TP for short)
if all its minors are nonnegative,
and {\it strictly totally positive}
(STP for short)
if all its minors are positive.
We use
$$A\left(
\begin{array}{cc}
i_0,\ldots,i_k \\
j_0,\ldots,j_k
\end{array}\right)$$
to denote the minor of the matrix $A$
determined by the rows indexed $i_0<i_1<\cdots<i_k$
and columns indexed $j_0<j_1<\cdots<j_k$ respectively.
For a (finite or infinite) lower triangular matrix $A$,
we say that it is {\it lower strictly totally positive} (LSTP for short)
if $A$ is totally positive and
$A\left(
\begin{array}{cc}
i_0,\ldots,i_k \\
j_0,\ldots,j_k
\end{array}\right)>0$
whenever $i_{\ell}\ge j_{\ell}$ for each $\ell$.

There have been a lot of recent interests in the total positivity of combinatorial matrices
(see \cite{CLW15,CLW15B,CW19,GP20,MMW22,Mon12a,Mon12e,WY18,Zhu21} for instance).
In this subsection we show that coordination matrices $S,\s,C,\c$ are all TP,
and furthermore, $C$ is STP and $\c$ is LSTP.

Let $(a_k)_{k\ge 0}$ be a (finite or infinite) sequence of nonnegative numbers
(we identify a finite sequence $a_0,a_1,\ldots,a_n$ with the infinite sequence $a_0,a_1,\ldots,a_n,0,0,\ldots$).
We say that the sequence is a {\it P\'olya frequency} sequence (PF for short)
if the corresponding Toeplitz matrix $[a_{i-j}]_{i,j\ge 0}$ is TP.
The following is a fundamental characterization for PF sequences (see~\cite[p. 412]{Kar68} for instance).

\begin{SE}
A sequence $(a_k)_{k\ge 0}$ of nonnegative numbers is PF if and only if its generating function has the form
\begin{equation*}\label{SE-fps}
  \sum_{k\ge 0}a_kx^k=ax^me^{\gamma x}\frac{\prod_{j\ge0}(1+\alpha_j x)}{\prod_{j\ge0} (1-\beta_j x)},
\end{equation*}
where $a>0, m\in\mathbb{N}, \alpha_j, \beta_j, \gamma\ge 0$
and $\sum_{j\ge0} (\alpha_j+\beta_j)<+\infty$.
\end{SE}

For example,
the sequence $\xi=(1,2,2,2,\ldots)$ is PF since it has generating function $\frac{1+x}{1-x}$,
and the corresponding Toplitz matrix
$$T(\xi)=
\left(\begin{array}{ccccc}
1 &  &  &  &\\
2 & 1 &  & & \\
2 & 2 & 1 & & \\
2 & 2 & 2 & 1 &\\
\vdots &  & & & \ddots \\
\end{array}\right)
$$
is therefore TP.

\begin{lem}[{\cite[Theorem 2.1]{CW19}}]\label{cw-c}
Let $f(x)=\sum_{n\ge 0}f_nx^n$ and $g(x)=\sum_{n\ge 0}g_nx^n$.
If both $(f_n)_{n\ge 0}$ and $(g_n)_{n\ge 0}$ are PF,
then the Riordan array $\R(g(x),f(x))$ is totally positive.
\end{lem}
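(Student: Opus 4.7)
The plan is to realize $\R(g,f)$ as a product of two totally positive matrices and conclude via the Cauchy-Binet formula. First, I would verify the factorization $\R(g,f)=T(g)\cdot\R(1,f)$, where $T(g)=[g_{i-j}]_{i,j\ge 0}$ is the Toeplitz matrix of $g$ (with $g_n=0$ for $n<0$): the $(n,k)$-entry of the right-hand side is $\sum_{j\ge 0}g_{n-j}[x^j]f(x)^k=[x^n]g(x)f(x)^k$, which is exactly $\R(g,f)_{n,k}$. By the very definition of a PF sequence, the hypothesis on $(g_n)$ gives that $T(g)$ is TP, so this factor is in hand.

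The crux is to show that $\R(1,f)$ is itself TP whenever $(f_n)$ is PF. I would attack this combinatorially via the Lindstr\"om-Gessel-Viennot lemma. Consider the weighted acyclic directed graph $G$ with vertex set $\{(i,k):i\ge 0,\ 0\le k\le K\}$ (for any sufficiently large $K$) and, for each $k\ge 1$ and each $j\ge i$, a directed edge $(i,k)\to(j,k-1)$ carrying weight $f_{j-i}\ge 0$; a direct calculation yields that the total weight of directed paths from $(0,k)$ to $(n,0)$ equals
\begin{equation*}
\sum_{\substack{m_1,\ldots,m_k\ge 0\\ m_1+\cdots+m_k=n}}\prod_{\ell=1}^{k}f_{m_\ell}=[x^n]f(x)^k=\R(1,f)_{n,k}.
\end{equation*}
For a minor indexed by rows $I=\{n_1<\cdots<n_p\}$ and columns $J=\{k_1<\cdots<k_p\}$, I would place sources $s_s=(0,k_s)$ on the left boundary and sinks $t_r=(n_r,0)$ on the bottom boundary. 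In a planar embedding of $G$ these appear on the outer face in the non-interleaved cyclic order $t_1,\ldots,t_p,s_p,\ldots,s_1$, so the LGV lemma forces the identity to be the only matching admitting vertex-disjoint path systems, and the minor equals the nonnegative sum $\sum\prod_\ell w(P_\ell)$ over such families.

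Combining the two TP factors via the Cauchy-Binet formula, every minor of $\R(g,f)=T(g)\cdot\R(1,f)$ is a nonnegative sum of products of nonnegative minors of the factors, whence $\R(g,f)$ is TP. The main obstacle will be the LGV step: one must verify that $G$ genuinely admits a planar embedding (each bipartite layer of edges between adjacent levels is a "staircase" bipartite graph that can be drawn with non-crossing arcs even though the naive straight-line drawing has crossings, and these layers stack into a planar whole), and one must check that the described cyclic arrangement of sources and sinks truly rules out any non-identity matching from producing a vertex-disjoint path family.
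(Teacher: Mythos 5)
Your reduction step is fine: the factorization $\R(g,f)=T(g)\cdot\R(1,f)$ is correct, $T(g)$ is TP precisely because $(g_n)$ is PF, and Cauchy--Binet then reduces everything to the total positivity of $\R(1,f)$. But that is the easy part; the entire content of the lemma is the claim that $\R(1,f)$ is TP when $(f_n)$ is PF, and your Lindstr\"om--Gessel--Viennot argument for it fails. The layered digraph with an edge $(i,k)\to(j,k-1)$ of weight $f_{j-i}$ for every $j\ge i$ is not planar: already between two consecutive levels, the vertices at positions $0,1,2$ above and $2,3,4$ below span a complete bipartite $K_{3,3}$, so no non-crossing drawing exists, ``staircase'' rearrangement or not. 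More importantly, the conclusion you want is simply false for this network: because an edge may jump over an occupied vertex, non-identity matchings do admit vertex-disjoint path families. Concretely, for the minor with rows $\{2,3\}$ and columns $\{1,2\}$, the sources are $(0,1),(0,2)$ and the sinks $(2,0),(3,0)$, and the family $P_1\colon(0,1)\to(3,0)$, $P_2\colon(0,2)\to(1,1)\to(2,0)$ is vertex-disjoint and realizes the transposition; the LGV expansion of this minor therefore contains negative terms, and in fact the minor equals $2f_1f_2^2-f_1^2f_3$.

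This also shows the gap is not one of presentation: your argument uses nothing about $(f_n)$ beyond nonnegativity, and under that hypothesis alone the statement is false --- for $f(x)=1+x+x^3$ the minor above equals $-1$, so $\R(1,f)$ need not be TP. Any correct proof must use the PF hypothesis on $f$ in an essential way (in the example it is exactly a Newton/log-concavity-type inequality that makes $2f_1f_2^2-f_1^2f_3\ge 0$). For comparison, the paper does not prove Lemma \ref{cw-c} at all but quotes it from \cite{CW19}; the technique it does develop for the neighbouring strict statements (Proposition \ref{lp} and Theorem \ref{ff-c}) is the algebraic decomposition $R=R^L\cdot\r$ iterated together with Cauchy--Binet and induction, and the proof in \cite{CW19} likewise rests on such matrix decompositions plus the Schoenberg--Edrei characterization of PF sequences, not on a planar-network argument. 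If you want a self-contained proof, that is the route to follow; a lattice-path/LGV approach would at minimum require a genuinely planar network realizing $[x^n]f^k$, which your construction does not provide.
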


An immediate consequence is the matrix
$L^{(m)}=\R\left(\frac{1+(m-1)x}{1-x},\frac{1+x}{1-x}\right)$
and the corresponding triangle $\widehat{L^{(m)}}=\R\left(\frac{1+(m-1)x}{1-x},\frac{x(1+x)}{1-x}\right)$
are all totally positive for $m\in\mathbb{N}$.
In particular,
the Delannoy matrix $D$ and the Delannoy triangle $\d$ are all totally positive,
which have been obtained by Brenti \cite[Corollary 5.15]{Bre95}.

\begin{coro}
The coordination matrices $S,\s,C,\c$ are all totally positive.
\end{coro}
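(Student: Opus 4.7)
The plan is to read off the Riordan-array representations from Proposition \ref{SC-R} and then apply Lemma \ref{cw-c} to each of the four matrices. Concretely, I want to show that in each case $R = \R(g(x),f(x))$, both coefficient sequences $(g_n)$ and $(f_n)$ are PF, and then total positivity is immediate from Lemma \ref{cw-c}.

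The two generating functions that actually appear are $1$, $\frac{1+x}{1-x}$, and $\frac{x(1+x)}{1-x}$. The sequence $(1,0,0,\ldots)$ with generating function $1$ is trivially PF (its Toeplitz matrix is the identity). The sequence $(1,2,2,2,\ldots)$ with generating function $\frac{1+x}{1-x}$ was already noted to be PF in the example preceding Lemma \ref{cw-c}, and in any case it fits the Schoenberg--Edrei form with a single factor $(1+x)$ in the numerator and a single factor $(1-x)$ in the denominator. The sequence $(0,1,2,2,2,\ldots)$ with generating function $\frac{x(1+x)}{1-x}$ similarly fits the Schoenberg--Edrei form (with $m=1$), so it too is PF.

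With these PF verifications in hand, I would then simply enumerate the four matrices using Proposition \ref{SC-R}: for $S=\R(1,\frac{1+x}{1-x})$ and $\s=\R(1,\frac{x(1+x)}{1-x})$ the function $g$ is trivial and only $f$ needs to be PF; for $C=\R(\frac{1+x}{1-x},\frac{1+x}{1-x})$ and $\c=\R(\frac{1+x}{1-x},\frac{x(1+x)}{1-x})$ both $g$ and $f$ are among the PF functions just listed. Lemma \ref{cw-c} then yields total positivity of each.

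There is essentially no obstacle here: once one has Proposition \ref{SC-R} and Lemma \ref{cw-c}, the statement is a direct specialization. The only thing to be careful about is recording that $\frac{x(1+x)}{1-x}$, having a zero constant term, still counts as PF in the sense required by Lemma \ref{cw-c}; this is precisely what the factor $x^m$ in the Schoenberg--Edrei theorem accommodates.
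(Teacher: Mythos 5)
Your proof is correct and follows essentially the paper's own route: the paper also deduces the corollary from Lemma \ref{cw-c} applied to the Riordan-array forms (stated there for the family $L^{(m)}$ and $\widehat{L^{(m)}}$, of which $S,\s,C,\c$ are the cases $m=0,2$), with the PF verifications coming from the Schoenberg--Edrei form exactly as you describe. Your remark that the factor $x$ in $\frac{x(1+x)}{1-x}$ is covered by the $x^m$ term in Schoenberg--Edrei is the right point of care and matches the paper's implicit use.
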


We further show that the coordination matrix $C$ is STP
and the coordination triangle $\c$ is LSTP.
The following criterion for STP can be found in Pinkus \cite[Theorem 2.3]{Pin10}.

\begin{lem}\label{stp-c}
Let $A=[a_{ij}]_{0\le i,j\le m}$.
Then $A$ is STP if
$$A\binom{i,i+1,\ldots,i+k}{0,1,\ldots,k}>0
\quad\text{and}\quad
A\binom{0,1,\ldots,k}{i,i+1,\ldots,i+k}>0$$
for $k=0,1,2,\ldots,m$ and $i=0,\ldots,m-k$.
\end{lem}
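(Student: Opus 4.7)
The plan is to reduce the lemma to Fekete's classical criterion and then propagate positivity from the hypothesized corner-consecutive minors to all consecutive minors via the Desnanot--Jacobi (Sylvester) determinant identity. Write $M(i,j,k):=A\binom{i,i+1,\ldots,i+k}{j,j+1,\ldots,j+k}$ for the consecutive minor of $A$ starting at $(i,j)$ of size $k+1$, and adopt the convention $M(\cdot,\cdot,-1)=1$. By Fekete's theorem, $A$ is STP iff every such $M(i,j,k)$ is positive; the hypothesis already gives this whenever $i=0$ or $j=0$, so the task reduces to establishing $M(i,j,k)>0$ for all $i,j\ge 1$ as well.

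The key recursion comes from applying Desnanot--Jacobi to the $(k+2)\times(k+2)$ submatrix with row set $\{i-1,i,\ldots,i+k\}$ and column set $\{j-1,j,\ldots,j+k\}$, which yields
$$M(i-1,j-1,k)\,M(i,j,k)=M(i-1,j-1,k+1)\,M(i,j,k-1)+M(i-1,j,k)\,M(i,j-1,k).$$
I would then run a double induction, outer on $s:=i+j$ and inner on $k$. The base $s=0$ forces $i=j=0$ and is covered by the hypothesis. In the inductive step with $i=0$ or $j=0$ the hypothesis applies directly; otherwise $i,j\ge 1$ and the identity above applies. Among its terms, $M(i-1,j-1,k+1)$, $M(i-1,j,k)$, $M(i,j-1,k)$, and the denominator $M(i-1,j-1,k)$ all have strictly smaller $s$ (so are positive by the outer IH for every admissible value of their third argument), while $M(i,j,k-1)$ has the same $s$ but smaller $k$ and is positive by the inner IH; the convention $M(\cdot,\cdot,-1)=1$ uniformly accommodates the base $k=0$. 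Rearranging then yields $M(i,j,k)>0$, completing the induction.

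The main obstacle is arranging the induction correctly. One cannot simply induct on $i+j+k$, because the minor $M(i-1,j-1,k+1)$ on the right has a larger value of $k$ than the target $M(i,j,k)$; the outer-on-$(i+j)$ / inner-on-$k$ nesting is precisely what keeps every term under control and, crucially, ensures the denominator $M(i-1,j-1,k)$ is already known to be positive at the moment it is needed. Once this nesting is fixed, the remaining verifications---that the Desnanot--Jacobi index ranges stay inside $[0,m]$ (automatic from $i+k\le m$ and $j+k\le m$), and that Fekete's reduction to consecutive minors is available as a black box---are routine.
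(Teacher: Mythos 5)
The paper does not prove this lemma at all --- it is quoted verbatim from Pinkus \cite[Theorem 2.3]{Pin10} --- so there is no in-paper argument to compare against; what you have written is a correct, essentially textbook proof of the cited result. Your application of Desnanot--Jacobi to the $(k+2)\times(k+2)$ block with rows $\{i-1,\ldots,i+k\}$ and columns $\{j-1,\ldots,j+k\}$ is set up correctly (the four cofactor minors are identified correctly, and admissibility of $M(i-1,j-1,k+1)$ follows from $i+k\le m$, $j+k\le m$), and the outer-on-$i+j$, inner-on-$k$ nesting does exactly what you say it does: it places $M(i-1,j-1,k+1)$, whose size exceeds that of the target, safely under the outer hypothesis, and guarantees the divisor $M(i-1,j-1,k)$ is already known positive. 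Since both products in the numerator are strictly positive, the conclusion $M(i,j,k)>0$ follows, and Fekete's criterion (which Pinkus also proves separately before Theorem 2.3, so invoking it as a black box is legitimate here) finishes the reduction from consecutive minors to all minors. The only cosmetic point worth tightening is the inner base case $k=0$: the convention $M(\cdot,\cdot,-1)=1$ makes the identity read $a_{i-1,j-1}a_{ij}=M(i-1,j-1,1)+a_{i-1,j}a_{i,j-1}$, which is fine, but you should state explicitly that $M(i-1,j-1,1)$ is covered by the outer hypothesis at level $i+j-2$. No gaps.
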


The following criterion for LSTP is a transpose version of Pinkus \cite[Proposition 2.9]{Pin10}.
	
\begin{lem}
\label{LSTP-c}
Let $A=[a_{ij}]_{0\le i,j\le m}$ be a totally positive lower triangular matrix.
Then $A$ is LSTP if
$$A\binom{m-k,m-k+1,\ldots,m}{0,1,\ldots,k}>0$$
for $k=0,1,\ldots,m$.
\end{lem}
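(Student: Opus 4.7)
The plan is to derive the claim directly from Pinkus's Proposition 2.9 (the upper-triangular version) by passing to the transpose, exactly as the attribution in the statement suggests. First I would recall Pinkus's original result: an $(m+1) \times (m+1)$ totally positive upper triangular matrix $B$ is upper strictly totally positive, in the sense that every minor $B\binom{i_0,\ldots,i_k}{j_0,\ldots,j_k}$ with $i_\ell \le j_\ell$ for each $\ell$ is positive, provided that the $(k+1)\times(k+1)$ ``corner'' minors $B\binom{0,1,\ldots,k}{m-k,m-k+1,\ldots,m}$ are positive for every $k=0,1,\ldots,m$.

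Next I would apply this result to $B = A^T$. Since transposition preserves total positivity and sends lower triangular matrices to upper triangular ones, $A^T$ is a totally positive upper triangular matrix. The key observation is the elementary identity
$$A^T\binom{i_0,\ldots,i_k}{j_0,\ldots,j_k} = A\binom{j_0,\ldots,j_k}{i_0,\ldots,i_k}.$$
Specializing to $i_\ell = \ell$ and $j_\ell = m-k+\ell$ yields
$$A^T\binom{0,1,\ldots,k}{m-k,m-k+1,\ldots,m} = A\binom{m-k,m-k+1,\ldots,m}{0,1,\ldots,k},$$
so the hypothesis of the present lemma coincides exactly with the hypothesis of Pinkus's proposition applied to $A^T$. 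Hence $A^T$ is upper strictly totally positive.

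Finally I would translate back. Any minor relevant to the LSTP condition on $A$, namely $A\binom{i_0,\ldots,i_k}{j_0,\ldots,j_k}$ with $i_\ell \ge j_\ell$ for all $\ell$, equals $A^T\binom{j_0,\ldots,j_k}{i_0,\ldots,i_k}$, whose row and column indices satisfy $j_\ell \le i_\ell$; by the previous paragraph this minor is positive. Together with the assumed total positivity of $A$, this is exactly the LSTP property as defined in the paper. The only delicate point is the index bookkeeping when the inequalities flip under transposition, but this is purely notational; there is no real technical obstacle, since Pinkus's proposition does the substantive work and the present lemma is its symmetric counterpart.
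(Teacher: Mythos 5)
Your proposal is correct and follows essentially the same route as the paper, which offers no independent argument but simply presents the lemma as the transpose version of Pinkus's Proposition 2.9. Your write-up merely makes the routine transposition bookkeeping explicit (minors of $A^{T}$ are the transposed minors of $A$, so the corner-minor hypothesis and the LSTP conclusion translate exactly), which is all that is needed.
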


We also need the following classical result. 

\begin{CBF}
Let $A,B,C$ be three matrices and $C=AB$. Then
$$C\binom{i_1,\ldots,i_k}{j_1,\ldots,j_k}=
\sum_{\ell_1<\cdots<\ell_k}A\binom{i_1,\ldots,i_k}{\ell_1,\ldots,\ell_k}\cdot
B\binom{\ell_1,\ldots,\ell_k}{j_1,\ldots,j_k}.$$
\end{CBF}

\begin{rem}
It follows immediately
that the product of TP matrices is still TP.
\end{rem}

\begin{thm}\label{ff-c}
If $(f_n)_{n\ge 0}$ is PF and all $f_n>0$,
then
the Riordan square array $R=\R(f(x),f(x))$ is STP
and the Riordan triangular array $\hr=\R(f(x),xf(x))$ is LSTP.
\end{thm}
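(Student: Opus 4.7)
The plan is to prove both statements by factoring each Riordan array through the Toeplitz matrix $T(f) = \R(f, x)$ and then applying the Cauchy-Binet Formula. The Riordan-group product rule gives
\begin{equation*}
R = T(f)\cdot U, \qquad \hr = T(f)\cdot V,
\end{equation*}
with $U = \R(1, f)$ and $V = \R(1, xf)$. By the Schoenberg-Edrei Theorem, $T(f)$ is TP, and by Lemma~\ref{cw-c} both $U$ and $V$ are TP. Hence every summand produced by Cauchy-Binet is already nonnegative, and the task in each case reduces to exhibiting a single strictly positive summand.

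I would first treat the triangular case. Lemma~\ref{LSTP-c} reduces LSTP of $\hr$ to showing the bottom-left corner minors $\hr\binom{m-k,\ldots,m}{0,\ldots,k}$ are strictly positive. Since $V$ is lower triangular with positive diagonal $V_{r,r}=f_0^r$, the intermediate-index choice $\ell_r = r$ in the Cauchy-Binet expansion contributes $\prod_{r=0}^{k} f_0^r$ times the Toeplitz minor $\det[f_{m-k+r-s}]_{r,s=0}^{k}$. The crux is that this Toeplitz minor is strictly positive; this is the classical strict refinement of Schoenberg-Edrei, obtainable via the Lindstr\"om-Gessel-Viennot interpretation of $f$ as a path-counting generating function (or from the product representation of PF generating functions). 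I regard this strict-positivity input as the principal obstacle of the whole argument.

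For the square case, Lemma~\ref{stp-c} reduces STP of $R$ to the top-anchored and left-anchored consecutive minors. Writing $f = f_0(1+q)$ with $q(x) = (f(x)-f_0)/f_0$ factors
\begin{equation*}
U \;=\; \tilde U \cdot \R(1,\, 1+x) \cdot \mathrm{diag}(f_0^k),
\end{equation*}
where $\tilde U = \R(1, q)$ is a proper Riordan array with diagonal $\tilde U_{r,r}=(f_1/f_0)^r$. For the top-anchored minors, the lower-triangularity of $T(f)$ collapses the Cauchy-Binet sum to a single term whose $U$-factor equals the product of the positive triangular determinant $\prod_{r}(f_1/f_0)^r$ and a minor of the upper Pascal array $\R(1,1+x)$, the latter positive by the classical STP of the Pascal matrix. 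For the left-anchored minors, only summands with $\ell_0 = 0$ survive (since $U$'s zeroth column is $e_0$); the surviving $U$-factor reduces, via the same Pascal-based decomposition, to a $k\times k$ minor of $\tilde U$ with row and column indices in $\{1,2,\ldots\}$. Since
\begin{equation*}
\R(1,\, q) \;=\; \begin{pmatrix} 1 & 0 \\ 0 & \R(\hat q,\, x\hat q) \end{pmatrix}
\end{equation*}
with $\hat q(x) = q(x)/x$ satisfying $\hat q_n = f_{n+1}/f_0 > 0$ and PF (as a shift of $(f_n/f_0)$), this minor is a corner minor of the triangular array $\R(\hat q, x\hat q)$, positive by the triangular case of the theorem already established, now applied to $\hat q$. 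Thus every required minor of $R$ is strictly positive.
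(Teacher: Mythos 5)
Your overall architecture---factor through a Toeplitz-type matrix, expand by Cauchy--Binet, and isolate a single strictly positive summand---is the same as the paper's, but your specific factorization $R=T(f)\cdot\R(1,f)$, $\hr=T(f)\cdot\R(1,xf)$ shifts the entire burden of strict positivity onto minors of $T(f)=[f_{i-j}]$, and the claim you yourself flag as the crux is false. There is no ``strict refinement of Schoenberg--Edrei'' guaranteeing $\det[f_{m-k+r-s}]_{r,s=0}^{k}>0$ for every PF sequence with all $f_n>0$: take $f(x)=1/(1-x)$, so $f_n\equiv 1$; with $m=2$, $k=1$ the minor is $\det\left(\begin{smallmatrix}1&1\\1&1\end{smallmatrix}\right)=0$, even though the target $\hr=\p$ is genuinely LSTP. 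The Toeplitz matrix of a positive PF sequence is TP but in general far from LSTP, and the Lindstr\"om--Gessel--Viennot picture only yields strict positivity when a vertex-disjoint path family exists, which it does not here. So your chosen summand $\ell_r=r$ can contribute $0$ and the triangular case collapses. The square case inherits the gap twice: the left-anchored minors lean on the (unproved) triangular case applied to $\hat q$, and you never specify which intermediate indices $\ell_1<\cdots<\ell_k$ make the accompanying $T(f)$-minor positive---the natural consecutive choice fails for the same reason.

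The paper's way out is a different, \emph{self-referential} factorization: $R=R^L\cdot\left(\begin{smallmatrix}1&0\\0&R\end{smallmatrix}\right)$ and $\hr=T(f)\cdot\left(\begin{smallmatrix}1&0\\0&\hr\end{smallmatrix}\right)$ (Proposition \ref{lp}), with the intermediate index set chosen as $\{0\}$ together with a block of large \emph{consecutive} indices, e.g.\ $\{0,m-k+1,\ldots,m\}$ in the triangular case. That choice makes the Toeplitz-type factor's minor block triangular, so it evaluates to a pure product of entries such as $f_{m-k}f_0^{k}>0$, while the other factor's minor is a strictly smaller corner minor of $R$ (or $\hr$) itself; induction on the size of the minor then closes the argument, with no strict positivity of Toeplitz minors ever required. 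If you insist on your factorization, you would need to prove the analogous strict-positivity statements for $\R(1,f)$ and $\R(1,xf)$ as separate companion results, which is not obviously easier; the cleanest repair is to switch to the left-product-matrix decomposition and run the induction.
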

\begin{proof}
(1)\quad We show that $R=\R(f(x),f(x))$ is STP by Lemma \ref{stp-c}.
We need to show that
\begin{equation}\label{k-col}
  R\binom{i,i+1,\ldots,i+k}{0,1,\ldots,k}>0
\end{equation}
and
\begin{equation}\label{k-row}
  R\binom{0,1,\ldots,k}{i,i+1,\ldots,i+k}>0
\end{equation}
for $k\ge 0$ and $i\ge 0$.

By Proposition \ref{lp}, we have the decomposition $R=L\cdot\r$, where
$$L=
\left(
  \begin{array}{ccccc}
    f_0 & f_0 & 0 & 0 & \cdots \\
    f_1 & f_1 & f_0 & 0 &  \\
    f_2 & f_2 & f_1 & f_0 &  \\
    f_3 & f_3 & f_2 & f_1 &  \\
    \vdots & \vdots &  &  & \ddots \\
  \end{array}
\right),\quad
\r=
\left(
\begin{array}{cc}
    1 & 0 \\
    0 & R \\
  \end{array}
\right).$$
By the assumption that the sequence $(f_n)_{n\ge 0}$ is PF,
it follows that the Toeplitz matrix $T=[f_{i-j}]$ and the Riordan array $R=\R(f,f)$ are TP,
and so are the matrices $L$ and $\r$.

We first prove \eqref {k-col}.
We do this by induction on $k$.
The case $k=0$ is trivial, and so let $k\ge 1$.
Applying the Cauchy-Binet formula to $R=L\cdot\r$
and noting that minors of both $L$ and $\r$ are all nonnegative,
we have
\begin{eqnarray*}
R\binom{i,i+1,\ldots,i+k}{0,1,\ldots,k}
&=&\sum_{0\le j_0<j_1<\cdots<j_k}
L\binom{i,i+1,\ldots,i+k}{j_0,j_1,\ldots,j_k}\cdot \r\binom{j_0,j_1,\ldots,j_k}{0,1,\ldots,k}\\
&\ge& L\binom{i,i+1,\ldots,i+k}{0,i+2,\ldots,i+k+1}\cdot \r\binom{0,i+2,\ldots,i+k+1}{0,1,\ldots,k}\\
&=& (f_if_0^k)\cdot R\binom{i+1,i+2,\ldots,i+k}{0,1,\ldots,k-1}.
\end{eqnarray*}
It follows that \eqref{k-col} holds by induction on $k$.

We then prove \eqref{k-row}.
We do this by induction on $i$.
The case $i=0$ holds by \eqref{k-col}.
For $i\ge 1$,
we have
\begin{eqnarray*}
R\binom{0,1,\ldots,k}{i,i+1,\ldots,i+k}
&=&\sum_{0\le j_0<j_1<\cdots<j_k}
L\binom{0,1,\ldots,k}{j_0,j_1,\ldots,j_k}\cdot \r\binom{j_0,j_1,\ldots,j_k}{i,i+1,\ldots,i+k}\\
&\ge &L\binom{0,1,\ldots,k}{1,2,\ldots,k+1}\cdot \r\binom{1,2,\ldots,k+1}{i,i+1,\ldots,i+k}\\
&=&f_0^{k+1}\cdot R\binom{0,1,\ldots,k}{i-1,i,\ldots,i+k-1}.
\end{eqnarray*}
It follows that \eqref{k-row} holds by induction on $i$.

The Riordan array $R=\R(f(x),f(x))$ is therefore STP by Lemma \ref{stp-c}.

(2)\quad
Let $\hr_m$ be the $m$th leading principal submatrix of $\hr$ for $m\ge 0$.
To show that $\hr$ is LSTP,
it suffices to show that each $\hr_m$ is LSTP.
By Lemma \ref{LSTP-c},
we need to show that
for $0\le k\le m$,
\begin{equation}\label{c-m>0}
  \hr_m\binom{m-k,m-k+1,\ldots,m}{0,1,\ldots,k}>0.
\end{equation}
We proceed by induction on $m$.
By Proposition \ref{lp}, we have the decomposition $\hr=T\cdot\hr^+$, where
$$
T=\left(
  \begin{array}{ccccc}
    f_0 & 0 & 0 & 0 & \cdots \\
    f_1 & f_0 & 0 & 0 &  \\
    f_2 & f_1 & f_0 & 0 &  \\
    f_3 & f_2 & f_1 & f_0 &  \\
    \vdots &  &  &  & \ddots \\
  \end{array}
\right),\quad
\hr^+=
\left(
\begin{array}{cc}
    1 & 0 \\
    0 & \hr \\
  \end{array}
\right).$$
By the assumption that the sequence $(f_n)_{n\ge 0}$ is PF,
the Toeplitz matrix $T$ and the Riordan array $\hr$ are TP,
and so is the matrix $\hr^+$.
Applying the Cauchy-Binet formula to $\hr=T\cdot\hr^+$,
we have
\begin{eqnarray}\label{m-k}
\hr\binom{m-k,m-k+1,\ldots,m}{0,1,\ldots,k}
&=&
\sum_{0\le j_0<j_1<\cdots<j_k}
T\binom{m-k,m-k+1,\ldots,m}{j_0,j_1,\ldots,j_k}\cdot
\hr^+\binom{j_0,j_1,\ldots,j_k}{0,1,\ldots,k}\nonumber\\
&\ge &
T\binom{m-k,m-k+1,\ldots,m}{0,m-k+1,\ldots,m}\cdot
\hr^+\binom{0,m-k+1,\ldots,m}{0,1,\ldots,k}\nonumber\\
&=& (f_{m-k}f_0^{k})\cdot\hr\binom{m-k,\ldots,m-1}{0,1,\ldots,k-1}.
\end{eqnarray}
Note that for arbitrary $n\ge k\ge 0$,
$$\hr_n\binom{n-k,n-k+1,\ldots,n}{0,1,\ldots,k}
  =\hr\binom{n-k,n-k+1,\ldots,n}{0,1,\ldots,k}.$$
Hence by \eqref{m-k},
$$\hr_m\binom{m-k,m-k+1,\ldots,m}{0,1,\ldots,k}
\ge (f_{m-k}f_0^{k})\cdot \hr_{m-1}\binom{m-k,\ldots,m-1}{0,1,\ldots,k-1}.$$
Thus \eqref{c-m>0} holds by induction on $m$,
and $\hr$ is therefore LSTP.
\end{proof}

It follows immediately from Theorem \ref{ff-c} that
the Pascal matrix $P=\R\left(\frac{1}{1-x},\frac{1}{1-x}\right)$ is STP
and the Pascal triangle $\p=\R\left(\frac{1}{1-x},\frac{x}{1-x}\right)$ is LSTP.
Similarly, we have the following.

\begin{prop}\label{sc-s}
The coordination matrix $C=\R\left(\frac{1+x}{1-x},\frac{1+x}{1-x}\right)$ is STP
and the coordination triangle $\c=\R\left(\frac{1+x}{1-x},\frac{x(1+x)}{1-x}\right)$ is LSTP.
\end{prop}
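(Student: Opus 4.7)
My plan is to read Proposition \ref{sc-s} as a direct specialization of Theorem \ref{ff-c} applied to the power series $f(x)=\frac{1+x}{1-x}$. Since $C=\R\!\left(\frac{1+x}{1-x},\frac{1+x}{1-x}\right)$ fits the template $\R(f(x),f(x))$ and $\c=\R\!\left(\frac{1+x}{1-x},\frac{x(1+x)}{1-x}\right)$ fits $\R(f(x),xf(x))$, it suffices to verify the two hypotheses of Theorem \ref{ff-c}: that the coefficient sequence of $f$ is PF and that every coefficient is strictly positive.

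First, I would record the expansion $\frac{1+x}{1-x}=1+2x+2x^2+2x^3+\cdots$, so the coefficient sequence $\xi=(1,2,2,2,\ldots)$ is strictly positive, handling the second hypothesis. For the PF property, I would invoke the Schoenberg--Edrei theorem: $f(x)=\frac{1+x}{1-x}$ has exactly the admissible form with $a=1$, $m=0$, $\gamma=0$, one factor $(1+x)$ in the numerator, and one factor $(1-x)$ in the denominator, with $\alpha_0+\beta_0=2<\infty$. Thus $\xi$ is PF. (The paper already notes this and displays the corresponding totally positive Toeplitz matrix $T(\xi)$ just after the statement of the Schoenberg--Edrei theorem, so I can simply cite that computation.)

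Having checked both hypotheses, Theorem \ref{ff-c} applies verbatim: $R=\R(f(x),f(x))=C$ is STP and $\hr=\R(f(x),xf(x))=\c$ is LSTP. This completes the proof.

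Since the argument is a one-line specialization, there is no real obstacle; the only thing to watch is that $f(x)$ must literally be the same series in both slots of the Riordan array (so that Theorem \ref{ff-c} applies), which is indeed the case here by inspection of the definitions of $C$ and $\c$ in Proposition \ref{SC-R}. If a self-contained proof were desired instead, one would simply copy the inductive Cauchy--Binet arguments used in the proof of Theorem \ref{ff-c}, using the decompositions of $C$ and $\c$ already displayed in Example \ref{Cc-doc} (where the left product matrices have exactly the required form with $f_0=1$, $f_n=2$ for $n\ge 1$), but invoking Theorem \ref{ff-c} is cleaner.
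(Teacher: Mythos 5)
Your proposal is correct and coincides with the paper's own argument: Proposition~\ref{sc-s} is stated immediately after Theorem~\ref{ff-c} as a direct specialization to $f(x)=\frac{1+x}{1-x}$, whose coefficient sequence $(1,2,2,2,\ldots)$ the paper has already verified to be PF (via the Schoenberg--Edrei theorem) and strictly positive. Nothing further is needed.
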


\begin{coro}
The Delannoy matrix $D=\R\left(\frac{1}{1-x},\frac{1+x}{1-x}\right)$ is STP
and the Delannoy triangle $\d=\R\left(\frac{1}{1-x},\frac{x(1+x)}{1-x}\right)$ is LSTP.
\end{coro}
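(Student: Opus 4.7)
The plan is to prove STP of $D$ by applying Cauchy-Binet to the LDU decomposition $D=\p\Lambda\p^t$ recorded just before Proposition~\ref{SC-doc}, and to prove LSTP of $\d$ by factoring $\d$ through $\s$ and invoking the LSTP of $\c$ from Proposition~\ref{sc-s}.

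For STP of $D$, apply Cauchy-Binet twice to $D=\p\Lambda\p^t$ with $\Lambda=\mathrm{diag}(1,2,4,\ldots)$. Since $\Lambda$ is diagonal, the minor $\Lambda\binom{\ell_0,\ldots,\ell_k}{m_0,\ldots,m_k}$ vanishes unless $(\ell_r)=(m_r)$, in which case it equals $2^{\ell_0+\cdots+\ell_k}$. Hence
\begin{equation*}
D\binom{i_0,\ldots,i_k}{j_0,\ldots,j_k}=\sum_{\ell_0<\cdots<\ell_k} 2^{\ell_0+\cdots+\ell_k}\,\p\binom{i_0,\ldots,i_k}{\ell_0,\ldots,\ell_k}\,\p\binom{j_0,\ldots,j_k}{\ell_0,\ldots,\ell_k}.
\end{equation*}
Every summand is nonnegative by the TP of $\p$, and the term with $(\ell_0,\ldots,\ell_k)=(0,1,\ldots,k)$ is strictly positive by the LSTP of $\p$, since $i_r\ge r$ and $j_r\ge r$ follow automatically from strict monotonicity. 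Therefore every minor of $D$ is positive.

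For LSTP of $\d$, observe the factorization $\d=T\cdot\s$, where $T$ is the all-ones lower triangular Toeplitz matrix with generating function $1/(1-x)$ and $\s=\R(1,x(1+x)/(1-x))$. The identity follows from the Riordan group product $\R(1/(1-x),x)\cdot\R(1,x(1+x)/(1-x))=\R(1/(1-x),x(1+x)/(1-x))$, or equivalently from the partial-sum identity $\d(n,k)=D(n-k,k)=\sum_{i=0}^{n} s(i,k)$. Both $T$ and $\s$ are TP. By Lemma~\ref{LSTP-c} applied to each finite leading principal submatrix $\d_m$, LSTP of $\d$ reduces to
\begin{equation*}
\d\binom{m-k,m-k+1,\ldots,m}{0,1,\ldots,k}>0\qquad\text{for every } m\ge k\ge 0.
\end{equation*}
Apply Cauchy-Binet to $\d=T\cdot\s$ and select the summand indexed by $(\ell_0,\ell_1,\ldots,\ell_k)=(0,m-k+1,m-k+2,\ldots,m)$. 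The $T$-minor is the determinant of a lower-triangular all-ones submatrix and equals $1$. The $\s$-minor, expanded along its zeroth column (whose only nonzero entry is $\s(0,0)=1$), reduces to $\c\binom{m-k,m-k+1,\ldots,m-1}{0,1,\ldots,k-1}$, which is strictly positive by the LSTP of $\c$. All remaining summands are nonnegative.

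The main subtlety is the forced choice $\ell_0=0$ in the Cauchy-Binet selection: because $\s(i,0)=0$ for $i\ge 1$, only summands with $\ell_0=0$ give a nonzero $\s$-minor. Given this, the choice $\ell_r=m-k+r$ for $r\ge 1$ is the unique completion making the accompanying $T$-minor a lower-triangular all-ones determinant, producing the positive contribution that drives the inequality.
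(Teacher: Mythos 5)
Your proposal is correct, and it is half the paper's argument and half a different one. For the triangle, your proof is essentially the paper's: the same factorization $\widehat{D}=T\cdot\widehat{S}$ (the paper calls your $T$ by $J=\R\left(\frac{1}{1-x},x\right)$), the same reduction via Lemma \ref{LSTP-c} to the minors $\widehat{D}\binom{m-k,m-k+1,\ldots,m}{0,1,\ldots,k}$ of the leading principal submatrices, and the same Cauchy-Binet selection of the index set $0,m-k+1,\ldots,m$, after which the $\widehat{S}$-minor collapses to $\widehat{C}\binom{m-k,\ldots,m-1}{0,1,\ldots,k-1}>0$ by the LSTP of $\widehat{C}$ from Proposition \ref{sc-s}; your observation that $\widehat{S}(i,0)=0$ for $i\ge 1$ forces $\ell_0=0$ is true but not needed, since one positive summand plus nonnegativity of the rest suffices. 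For the square matrix $D$, however, you take a genuinely different route: the paper uses $D=J\cdot S$, the symmetry of $D$, Lemma \ref{stp-c}, and the STP of $C$ from Proposition \ref{sc-s}, whereas you use the LDU decomposition $D=\widehat{P}\,\Lambda\,\widehat{P}^t$ and two applications of Cauchy-Binet to write an arbitrary minor of $D$ as $\sum_{\ell_0<\cdots<\ell_k}2^{\ell_0+\cdots+\ell_k}\widehat{P}\binom{i_0,\ldots,i_k}{\ell_0,\ldots,\ell_k}\widehat{P}\binom{j_0,\ldots,j_k}{\ell_0,\ldots,\ell_k}$, isolating the term $\ell_r=r$, which is positive because $\widehat{P}$ is LSTP (Theorem \ref{ff-c}) and $i_r\ge r$, $j_r\ge r$ hold automatically. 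This buys a direct proof that \emph{every} minor of $D$ is positive, with no appeal to Lemma \ref{stp-c}, to the symmetry of $D$, or to the STP of $C$; the paper's route instead recycles Proposition \ref{sc-s} and only has to handle the one-sided minors required by Lemma \ref{stp-c}. All the ingredients you invoke (the decomposition $D=\widehat{P}\Lambda\widehat{P}^t$, TP of $T$ and $\widehat{S}$, LSTP of $\widehat{P}$ and $\widehat{C}$) are established in the paper before this corollary, so there is no gap; the only cosmetic omission is that Lemma \ref{LSTP-c} formally requires the submatrices $\widehat{D}_m$ to be TP, which follows either from the already-proved total positivity of $\widehat{D}$ or from your factorization into TP factors.
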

\begin{proof}
We show first that $D$ is STP by Lemma \ref{stp-c}.
Since $D$ is a symmetric matrix,
it suffices to show that $D\binom{i,i+1,\ldots,i+k}{0,1,\ldots,k}>0$ for $i\ge 0$ and $k\ge 0$.
Let
$$J=
\left(\begin{array}{ccccc}
1 &  &  &  &\\
1 & 1 &  & & \\
1 & 1 & 1 & & \\
1 & 1 & 1 & 1 &\\
\vdots &  & & & \ddots \\
\end{array}\right)
=\R\left(\frac{1}{1-x},x\right).$$
Then $D=J\cdot S$ by the definition \eqref{D-S}.
Note that both $J$ and $S$ are TP.
Hence
\begin{eqnarray*}
  D\binom{i,i+1,\ldots,i+k}{0,1,\ldots,k}
  &=& \sum_{0\le j_0<j_1<\cdots<j_k}
  J\binom{i,i+1,\ldots,i+k}{j_0,j_1,\ldots,j_k}
  \cdot
  S\binom{j_0,j_1,\ldots,j_k}{0,1,\ldots,k} \\
  &\ge & J\binom{i,i+1,\ldots,i+k}{0,i+1,\ldots,i+k}
  \cdot
  S\binom{0,i+1,\ldots,i+k}{0,1,\ldots,k}.
\end{eqnarray*}
Clearly,
$J\binom{i,i+1,\ldots,i+k}{0,i+1,\ldots,i+k}=1$,
and
$S\binom{0,i+1,\ldots,i+k}{0,1,\ldots,k}=C\binom{i,i+1,\ldots,i+k-1}{0,1,\ldots,k-1}>0$
since $C$ is STP by Proposition~\ref{sc-s}.
Thus $D\binom{i,i+1,\ldots,i+k}{0,1,\ldots,k}>0$,
and $D$ is therefore STP.

We show then that $\d$ is LSTP by Lemma \ref{LSTP-c}.
Now $\d=J\cdot\s$.
Hence for $0\le k\le m$,
\begin{eqnarray*}
\d\binom{m-k,m-k+1,\ldots,m}{0,1,\ldots,k}
&=&
\sum_{0\le j_0<j_1<\cdots<j_k}
J\binom{m-k,m-k+1,\ldots,m}{j_0,j_1,\ldots,j_k}\cdot
\s\binom{j_0,j_1,\ldots,j_k}{0,1,\ldots,k}\\
&\ge &
J\binom{m-k,m-k+1,\ldots,m}{0,m-k+1,\ldots,m}\cdot
\s\binom{0,m-k+1,\ldots,m}{0,1,\ldots,k}\\
&=&\c\binom{m-k,\ldots,m-1}{0,1,\ldots,k-1}.
\end{eqnarray*}
Thus $\d\binom{m-k,m-k+1,\ldots,m}{0,1,\ldots,k}>0$ since $\c$ is LSTP,
and $\d$ is therefore LSTP.
\end{proof}

Let $R^L$ be the left product matrix of a Riordan array $R$.
Very recently,
Mao~{\it et al.}~\cite[Theorem 3]{MMW22} showed that if $R^L$ is TP,
then so is $R$.
The following result can be obtained by means of
the same idea used in the proof of Theorem \ref{ff-c}.
We omit the details for the sake of brevity.

\begin{thm}\label{stp-gr}
Let $g(x)=\sum_{n\ge 0}g_nx^n$ and $f(x)=\sum_{n\ge 0}f_nx^n$ with $f_n>0$ and $g_n>0$ for all $n\ge 0$.
If the matrix $\L(g(x),f(x))$ is TP,
then the Riordan square matrix $\R(g(x),f(x))$ is STP;
if the triangle $\L(g(x),xf(x))$ is TP,
then the Riordan triangular matrix $\R(g(x),xf(x))$ is LSTP.
\end{thm}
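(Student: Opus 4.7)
The plan is to mimic the proof of Theorem~\ref{ff-c} almost verbatim, since the only structural ingredients used there are the decomposition from Proposition~\ref{lp}, total positivity of both factors, and a single explicit submatrix of the left product matrix whose minor is a positive monomial in $f_0$ (now generalized to also involve $g_i$). The present hypotheses are exactly what is needed to substitute the matrix $\L(g,f)$ for the matrix called $L$ in that earlier argument.

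First I would record two preliminary facts. By Proposition~\ref{lp}, $R=\R(g(x),f(x)) = \L(g,f)\cdot R^+$; and by the result of Mao~\textit{et al.}~\cite[Theorem 3]{MMW22} cited just before the theorem, the hypothesis that $\L(g,f)$ is TP already forces $R$ (and hence the bordered matrix $R^+$) to be TP. Thus every minor of $R$ expands via the Cauchy--Binet formula into a sum of nonnegative terms, and it suffices to exhibit a single strictly positive summand.

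To verify $R$ is STP via Lemma~\ref{stp-c}, I would show by induction on $k$ that $R\binom{i,\ldots,i+k}{0,\ldots,k}>0$: retaining only the column indices $(0,i+2,i+3,\ldots,i+k+1)$ in the Cauchy--Binet expansion, the corresponding minor of $\L(g,f)$ is lower triangular with diagonal $g_i,f_0,\ldots,f_0$ and therefore equals $g_i f_0^k>0$, while the corresponding minor of $R^+$ collapses to $R\binom{i+1,\ldots,i+k}{0,\ldots,k-1}$. The row minors $R\binom{0,\ldots,k}{i,\ldots,i+k}$ are handled by induction on $i$ using the column choice $(1,2,\ldots,k+1)$, which yields $f_0^{k+1}\cdot R\binom{0,\ldots,k}{i-1,\ldots,i+k-1}$, with the base case $i=0$ already covered by the previous step. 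The LSTP claim for $\hr=\R(g(x),xf(x))$ is handled identically: decompose $\hr=\L(g,xf)\cdot \hr^+$, observe both factors are TP by the same reasoning, and apply Cauchy--Binet to $\hr\binom{m-k,\ldots,m}{0,\ldots,k}$ with column indices $(0,m-k+1,\ldots,m)$ to extract $g_{m-k}f_0^k\cdot\hr\binom{m-k,\ldots,m-1}{0,\ldots,k-1}$; Lemma~\ref{LSTP-c} then concludes via induction on $m$.

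The only place requiring genuine care is verifying that the chosen column indices really do produce a lower-triangular submatrix of $\L(g,f)$ (respectively $\L(g,xf)$) with the claimed diagonal entries; this reduces to tracking the two Toeplitz shapes of the left product matrix explicitly in each case. Once this combinatorial bookkeeping is recorded the inductions are mechanical and parallel those in Theorem~\ref{ff-c}, which is presumably why the authors feel safe omitting the details.
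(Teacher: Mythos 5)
Your proposal is correct and follows precisely the route the paper intends: the paper omits the proof, stating it is obtained by the same idea as Theorem~\ref{ff-c}, and your argument is exactly that — the decomposition of Proposition~\ref{lp}, total positivity of both factors (with Mao \emph{et al.} supplying TP of $\R(g,f)$ from TP of $\L(g,f)$), and Cauchy--Binet with the same column selections, the only change being the diagonal $g_if_0^k$ (resp.\ $g_{m-k}f_0^k$) in place of $f_if_0^k$. Your verification that the selected submatrices of $\L(g,f)$ and $\L(g,xf)$ are lower triangular with the claimed diagonals is exactly the bookkeeping the authors omit, and it checks out.
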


\begin{rem}
Using Theorem \ref{stp-gr}
we can show that the triangle $\widehat{L^{(m)}}$ is LSTP when $m\ge 1$,
but we do not know in which cases the matrix $L^{(m)}$ is STP.
\end{rem}

\subsection{Zeros of coordination polynomials}

Following \cite{LW-rz},
let $\rz$ denote the set of real polynomials with only real zeros.
For $f\in\rz$ and $\deg f=n$,
let $r_n(f)\le\cdots\le r_2(f)\le r_1(f)$ denote the zeros of $f$.
Let $f,g\in\rz$ and $\deg f=n$.
We say that $g$ {\it interlaces} $f$, denoted by $g\sint f$,
if $\deg g=n-1$ and
\begin{equation*}\label{int}
  r_{n}(f)<r_{n-1}(g)<r_{n-1}(f)<\cdots<r_2(f)<r_1(g)<r_1(f).
\end{equation*}
We say that $g$ {\it alternates left of} $f$, denoted by $g\salt f$,
if $\deg g=n$ and
\begin{equation*}\label{alt}
  r_{n}(g)<r_{n}(f)<r_{n-1}(g)<r_{n-1}(f)<\cdots<r_2(f)<r_1(g)<r_1(f).
\end{equation*}
By $g\prec f$ we denote ``either $g\sint f$ or $g\salt f$".
For notational convenience,
let $a\prec bx+c$ for $a,c>0$ and $b\ge 0$.

Let $\sgn (r)$ denote the sign of the real number $r$, i.e.,
$$\sgn(r)=
\left\{
  \begin{array}{rl}
    +1, & \hbox{for $r>0$;} \\
    0, & \hbox{for $r=0$;} \\
    -1, & \hbox{for $r<0$.}
  \end{array}
\right.$$
The following result is folklore and obvious.

\begin{lem}\label{scr}
Let $f,g\in\rz$ and $\deg f=\deg g$ or $\deg g+1$.
Suppose that $f$ and $g$ have the positive leading coefficients.
Then $g\prec f$ if and only if one of the following two conditions holds:
\begin{enumerate}[\rm (i)]
  \item $\sgn f(r_i(g))=(-1)^i$ for $1\le i\le \deg g$.
  \item $\sgn g(r_i(f))=(-1)^{i-1}$ for $1\le i\le \deg f$.
\end{enumerate}
\end{lem}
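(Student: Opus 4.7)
The plan is to exploit the sign pattern of a real-rooted polynomial with positive leading coefficient. Any $f\in\rz$ of degree $n$ with positive leading coefficient and simple roots $r_1(f)>r_2(f)>\cdots>r_n(f)$ can be written as $f(x)=c\prod_{j=1}^{n}(x-r_j(f))$ with $c>0$, from which I read off that $\sgn f(x)=(-1)^i$ on the open interval $(r_{i+1}(f),r_i(f))$, with the conventions $r_0(f)=+\infty$ and $r_{n+1}(f)=-\infty$. The same holds verbatim for $g$. With this observation the rest of the proof is a careful bookkeeping of which interval each relevant root lies in.

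For the forward direction of (i), I split into the two subcases packed inside $g\prec f$. If $g\sint f$, so $\deg g=n-1$, the defining chain $r_n(f)<r_{n-1}(g)<r_{n-1}(f)<\cdots<r_1(g)<r_1(f)$ places $r_i(g)\in(r_{i+1}(f),r_i(f))$ for $1\le i\le n-1$, which immediately yields $\sgn f(r_i(g))=(-1)^i$. If $g\salt f$, so $\deg g=n$, the same reasoning gives $\sgn f(r_i(g))=(-1)^i$ for $1\le i\le n-1$, and the additional inequality $r_n(g)<r_n(f)$ places $r_n(g)\in(-\infty,r_n(f))$, on which $f$ has sign $(-1)^n$. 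Either way (i) holds. For the converse, assume $\sgn f(r_i(g))=(-1)^i$ for $1\le i\le\deg g$. The only open interval on which $f$ takes the sign $(-1)^i$ is $(r_{i+1}(f),r_i(f))$, so each $r_i(g)$ is forced into the correct slot. Reading these inclusions in order reconstructs exactly the interlacing chain defining $g\sint f$ when $\deg g=n-1$ or $g\salt f$ when $\deg g=n$.

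The equivalence with (ii) is symmetric: I apply the same sign-pattern lemma to $g$ in place of $f$ and locate each $r_i(f)$ in an interval determined by the roots of $g$. When $g\sint f$, the root $r_i(f)$ satisfies $r_i(g)<r_i(f)<r_{i-1}(g)$ for $2\le i\le n$ and $r_1(g)<r_1(f)$ (an unbounded slot from the right), so $\sgn g(r_i(f))=(-1)^{i-1}$ with the shift of exponent accounting for $r_i(f)$ sitting to the right of $r_i(g)$ rather than to the left. When $g\salt f$, the same counting works with $r_i(f)\in(r_i(g),r_{i-1}(g))$ for all $1\le i\le n$. The converse direction again uses that each sign condition pins down a unique open interval. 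The degenerate boundary cases $a\prec bx+c$ declared in the statement are immediate from the definitions.

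The main ``obstacle'' is purely notational: keeping the indexing straight at the two extreme intervals $(r_1(\cdot),+\infty)$ and $(-\infty,r_n(\cdot))$, since these are precisely the places where the cases $g\sint f$ versus $g\salt f$ diverge and where the shift between $(-1)^i$ in (i) and $(-1)^{i-1}$ in (ii) is determined. No new machinery beyond the intermediate value theorem applied to a factorized polynomial is needed.
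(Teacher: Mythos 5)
The paper offers no proof of this lemma at all --- it is introduced with ``The following result is folklore and obvious'' --- so there is nothing to compare against; your argument is the standard folklore proof and it is correct. The only point worth tightening is in the converse directions: your statement that ``the only open interval on which $f$ takes the sign $(-1)^i$ is $(r_{i+1}(f),r_i(f))$'' presupposes that $f$ has simple zeros, which is not granted by the hypothesis of (i) alone; the clean fix is the sign-change count you are implicitly using (the alternation of $\sgn f$ at the points $r_1(g)>r_2(g)>\cdots$ together with the signs at $\pm\infty$ forces $\deg f$ sign changes on disjoint intervals, hence $\deg f$ distinct simple zeros, one per slot), and similarly for (ii). With that one sentence added the proof is complete.
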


It is known \cite{WZC19} that the zeros of the Delannoy polynomials $d_n(x)$ can be given explicitly by
\begin{equation}\label{dnx-z}
  r_{n,k}=r_k(d_n(x))=-\left(\sqrt{1+\cos^2\frac{k\pi}{n+1}}-\cos\frac{k\pi}{n+1}\right)^2,
  \quad k=1,2,\ldots,n.
\end{equation}
As a result, $d_{n-1}(x)\sint d_n(x)$
and
$$-3-2\sqrt{2}=-(\sqrt{2}+1)^2<r_{n,n}<\cdots<r_{n,k}<\cdots<r_{n,1}<-(\sqrt{2}-1)^2=-3+2\sqrt{2}.$$

\begin{thm}\label{cnx-z}
\begin{enumerate}[\rm (i)]
  \item
  All $c_n(x)$ have only real, simple zeros, and $c_{n-1}(x)\sint c_n(x)$.
  \item
  The zeros of $c_n(x)$ are in the open interval $(-3-2\sqrt{2},-3+2\sqrt{2})$.
\end{enumerate}
\end{thm}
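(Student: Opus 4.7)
The plan is to prove (i) and (ii) simultaneously by induction on $n$, combining the three-term recurrence $c_n(x)=(x+1)c_{n-1}(x)+xc_{n-2}(x)$ from Proposition \ref{cnx} with a direct computation of the signs of $c_n$ at the endpoints $-3\pm 2\sqrt{2}$ via the Binet form.

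First I would evaluate $c_n$ at $-3\pm 2\sqrt{2}$. At both points the discriminant $x^2+6x+1$ vanishes, so $\la_1=\la_2=(1+x)/2$, and a standard L'H\^opital argument applied to the Binet form of Proposition \ref{cnx} gives $c_n(x)=\la^{n-1}[(n+1)\la+n]$ at such a double root $\la$. Plugging in yields $c_n(-3+2\sqrt{2})=(\sqrt{2}-1)^{n-1}[(n+1)\sqrt{2}-1]>0$ and $c_n(-3-2\sqrt{2})=(-1-\sqrt{2})^{n-1}[-1-(n+1)\sqrt{2}]$, so $\sgn c_n(-3-2\sqrt{2})=(-1)^n$.

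The base case $n=1$ is immediate: $c_1(x)=x+2$ has its unique zero at $-2\in(-3-2\sqrt{2},-3+2\sqrt{2})$, and $c_0=1\prec c_1$ by the convention $a\prec bx+c$. For the inductive step, assume $c_{n-2}\sint c_{n-1}$ and that every zero of $c_{n-1}$ lies in $(-3-2\sqrt{2},-3+2\sqrt{2})$. Writing those zeros as $s_{n-1,n-1}<\cdots<s_{n-1,1}$, the recurrence gives $c_n(s_{n-1,k})=s_{n-1,k}\cdot c_{n-2}(s_{n-1,k})$. Since $s_{n-1,k}<0$ and $\sgn c_{n-2}(s_{n-1,k})=(-1)^{k-1}$ by Lemma \ref{scr}(ii), this forces $\sgn c_n(s_{n-1,k})=(-1)^k$ for $k=1,\ldots,n-1$. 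Lemma \ref{scr}(i) then yields $c_{n-1}\prec c_n$, and since $\deg c_n=\deg c_{n-1}+1$, this means $c_{n-1}\sint c_n$. In particular $c_n$ has $n$ simple real zeros $s_{n,n}<s_{n-1,n-1}<s_{n,n-1}<\cdots<s_{n-1,1}<s_{n,1}$, proving (i).

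For (ii), interlacing already places $s_{n,n-1},\ldots,s_{n,2}$ inside $(s_{n-1,n-1},s_{n-1,1})\subset(-3-2\sqrt{2},-3+2\sqrt{2})$, so only the extremal zeros $s_{n,1}$ and $s_{n,n}$ need to be controlled. The $k=1$ case of the sign computation above gives $c_n(s_{n-1,1})<0$, while the endpoint evaluation gives $c_n(-3+2\sqrt{2})>0$; hence a zero of $c_n$ lies in $(s_{n-1,1},-3+2\sqrt{2})$, and by interlacing it must be $s_{n,1}$. Similarly $\sgn c_n(s_{n-1,n-1})=(-1)^{n-1}$ is opposite to $\sgn c_n(-3-2\sqrt{2})=(-1)^n$, so $s_{n,n}\in(-3-2\sqrt{2},s_{n-1,n-1})$, closing the induction. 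The main obstacle is the Binet-form evaluation at the double-root endpoints $-3\pm 2\sqrt{2}$; once that sign table is in hand, the rest is routine sign-chasing along the three-term recurrence.
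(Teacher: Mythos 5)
Your argument is correct, and while part (i) coincides with the paper's proof (induction on $n$ via the recurrence $c_n(x)=(x+1)c_{n-1}(x)+xc_{n-2}(x)$ and sign-chasing through Lemma \ref{scr}), your treatment of part (ii) takes a genuinely different route. The paper bounds the zeros by comparing with the Delannoy polynomials: from $xc_n(x)=d_{n+1}(x)-d_n(x)$ and $d_n\sint d_{n+1}$ it deduces $c_n\sint d_{n+1}$, and then invokes the explicit formula \eqref{dnx-z}, which places all zeros of $d_{n+1}$ in $(-3-2\sqrt{2},-3+2\sqrt{2})$. You instead make the argument self-contained: at the endpoints $-3\pm2\sqrt{2}$ the characteristic roots collapse to $\la=(1+x)/2$, the degenerate Binet form gives $c_n=\la^{n-1}[(n+1)\la+n]$, hence $c_n(-3+2\sqrt{2})>0$ and $\sgn c_n(-3-2\sqrt{2})=(-1)^n$ (both evaluations check out), and coupling these endpoint signs with the interlacing from (i) in a joint induction pins the two extremal zeros inside the interval while interlacing handles the interior ones. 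What your approach buys is independence from the Delannoy polynomials and from the closed-form expression for their zeros, at the cost of the slightly more delicate double-root evaluation and the need to run (i) and (ii) together; what the paper's approach buys is additional structural information, namely the interlacing relations $c_n\sint d_{n+1}$ (and, in a remark, $d_{n-1}\sint c_n\salt d_n$), obtained essentially for free. One small point of rigor, shared equally with the paper: when you conclude $\sgn c_n(s_{n-1,k})=-\sgn c_{n-2}(s_{n-1,k})$ you use $s_{n-1,k}<0$, which your induction hypothesis (zeros in a negative interval) does supply, so no gap arises.
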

\begin{proof}
(i)\quad
We proceed by induction on $n$.
Assume that $c_k(x)\in\rz$ for $k\le n$ and $c_{n-1}(x)\sint c_{n}(x)$.
We need to show that $c_{n+1}(x)\in\rz$ and $c_n(x)\sint c_{n+1}(x)$.

Let $s_{n,n}<\cdots<s_{n,1}$ be zeros of $c_n(x)$.
Then by $c_{n-1}(x)\sint c_{n}(x)$ and by Lemma \ref{scr}, we have
$$\sgn c_{n-1}(s_{n,i})=(-1)^{i-1},\quad i=1,2,\ldots,n.$$
It follows from $c_{n+1}(x)=(x+1)c_n(x)+xc_{n-1}(x)$ that
$$\sgn c_{n+1}(s_{n,i})=-\sgn c_{n-1}(s_{n,i})=(-1)^{i},\quad i=1,2,\ldots,n.$$
Thus $c_{n+1}(x)$ has $n+1$ real, simple zeros and $c_n(x)\sint c_{n+1}(x)$ again by Lemma \ref{scr}.

(ii)\quad
Let $r_{n+1,n+1}<\cdots<r_{n+1,1}$ be zeros of $d_{n+1}(x)$.
Then by $d_{n}(x)\sint d_{n+1}(x)$, we have
$$\sgn d_{n}(r_{n+1,i})=(-1)^{i-1},\quad i=1,2,\ldots,n+1.$$
Recall that $xc_n(x)=s_{n+1}(x)=d_{n+1}(x)-d_{n}(x)$.
We have
$$\sgn c_{n}(r_{n+1,i})=\sgn d_{n}(r_{n+1,i})=(-1)^{i-1},\quad i=1,2,\ldots,n+1.$$
Thus $c_{n}(x)\sint d_{n+1}(x)$ by Lemma \ref{scr}.
The zeros of the polynomial $d_{n+1}(x)$ are in the open interval $(-3-2\sqrt{2},-3+2\sqrt{2})$,
so are those of the polynomial $c_{n}(x)$.
This completes the proof.
\end{proof}

\begin{rem}
Using the same method,
it follows from $c_n(x)=d_n(x)+d_{n-1}(x)$ that
$$d_{n-1}(x)\sint c_n(x)\salt d_n(x).$$
\end{rem}

\begin{rem}
Note that $s_n(x)=xc_{n-1}(x)$.
Hence the zeros of the polynomial $s_n(x)$ are real, simple
and in the set $(-3-2\sqrt{2},-3+2\sqrt{2})\cup \{0\}$.
\end{rem}

\begin{rem}
For the row generating functions $\lmn{m}{n}(x)=\sum_{k=0}^n\lm{m}(n,k)x^k$
of the triangle $\widehat{L^{(m)}}$,
it follows from \eqref{lnx-rr} and \eqref{lnx-dnx} that
for $m\ge 2$, the polynomials
$\ell^{(m)}_n(x)$ has real, simple zeros and
$$x+m=\ell^{(m)}_{1}(x)\sint \ell^{(m)}_2(x)\sint\cdots\sint\ell^{(m)}_{n-1}(x)\sint\ell^{(m)}_n(x)\salt d_n(x).$$
The zeros of $\ell^{(m)}_n(x)$ are therefore
less than $-3+2\sqrt{2}$, and even less than $-3-2\sqrt{2}$ for large $m$.
\end{rem}

Let $(f_n(x))_{n\ge 0}$ be a sequence of complex polynomials.
We say that the complex number $x$ to be a {\it limit of zeros} of the sequence $(f_n(x))_{n\ge 0}$
if there is a sequence $(z_n)_{n\ge 0}$ such that $f_n(z_n)=0$ and $z\rightarrow x$ as $n\rightarrow +\infty$.
Suppose now that $(f_n(x))_{n\ge 0}$ is a sequence of polynomials satisfying the recursion 
\begin{equation*}
f_{n+k}(x)=-\sum_{j=1}^kp_j(x)f_{n+k-j}(x),
\end{equation*}
where $p_j(x)$ are polynomials in $x$.
Let $\la_j(x)$ be all roots of the associated characteristic equation
$\la^k+\sum_{j=1}^{k-1}p_j(x)\la^{k-j}=0$.
It is well known that if $\la_j(x)$ are distinct, then
\begin{equation}\label{r}
  f_n(x)=\sum_{j=1}^k\alpha_j(x)\la_j^n(x),
\end{equation}
where $\alpha_j(x)$ are determined from the initial conditions.

\begin{BKW}[{\cite[Theorem]{BKW78}}]\label{bkw}
Under the non-degeneracy requirements that in \eqref{r}
no $\alpha_j(x)$ is identically zero
and that for no pair $i\neq j$ is $\la_i(x)\equiv\omega\la_j(x)$ for some $\omega\in\mathbb{C}$ of unit modulus,
then $x$ is a limit of zeros of $(f_n(x))_{n\ge 0}$ if and only if either
\begin{enumerate}[\rm (i)]
  \item two or more of the $\la_i(x)$ are of equal modulus, and strictly greater (in modulus) than the others; or
  \item for some $j$, $\la_j(x)$ has modulus strictly than all the other $\la_i(x)$ have, and $\alpha_j(x)=0$.
\end{enumerate}
\end{BKW}

\begin{thm}
The zeros of all coordination polynomials $c_n(x)$ are dense in the closed interval $[-3-2\sqrt{2},-3+2\sqrt{2}]$.
\end{thm}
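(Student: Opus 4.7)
The plan is to apply the Beraha-Kahane-Weiss theorem to the sequence $(c_n(x))_{n\ge 0}$. By Proposition \ref{cnx}, these polynomials satisfy the linear recurrence $c_n(x)=(x+1)c_{n-1}(x)+xc_{n-2}(x)$ with characteristic roots
\[
\la_{1,2}(x)=\frac{1+x\pm\sqrt{x^2+6x+1}}{2},
\]
and admit the Binet representation $c_n(x)=\alpha_1(x)\la_1^n(x)+\alpha_2(x)\la_2^n(x)$ where $\alpha_j(x)=\pm(\la_j(x)+1)/(\la_1(x)-\la_2(x))$. First I would verify the two non-degeneracy hypotheses of BKW: neither $\alpha_j(x)$ vanishes identically (since $\la_j(x)+1\not\equiv 0$ as a function of $x$), and $\la_1(x)/\la_2(x)$ is not identically equal to any unit-modulus constant, because it is a non-constant algebraic function of $x$.

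Next I would rule out case (ii) of BKW globally. A vanishing of $\alpha_j(x)$ would force $\la_j(x)=-1$; substituting $\la=-1$ into the characteristic polynomial $\la^2-(x+1)\la-x$ yields $1+(x+1)-x=2\neq 0$, so $-1$ is never a root. Hence case (ii) contributes nothing, and the set of limits of zeros equals the set $\{x\in\mathbb{C}:|\la_1(x)|=|\la_2(x)|\}$.

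Then I would analyze condition (i) on the real axis, using the Vieta relations $\la_1\la_2=-x$ and $\la_1+\la_2=x+1$. For real $x$ with $x^2+6x+1<0$, that is, $x\in(-3-2\sqrt{2},-3+2\sqrt{2})$, the roots $\la_{1,2}$ are complex conjugates, so $|\la_1|=|\la_2|=\sqrt{-x}$ automatically. At the endpoints $x=-3\pm 2\sqrt{2}$, the discriminant vanishes and $\la_1=\la_2$, so the condition again holds. For real $x$ outside this closed interval, both roots are real and distinct; equal modulus would force $\la_1=-\la_2$, i.e., $x+1=0$, but $x=-1$ lies inside the interval, a contradiction. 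Thus every real point of $[-3-2\sqrt{2},-3+2\sqrt{2}]$ is a limit of zeros of $(c_n(x))_{n\ge 0}$.

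Finally, by the definition of limit of zeros, for each such $x$ there is a sequence $z_n$ with $c_n(z_n)=0$ and $z_n\to x$. Combined with Theorem \ref{cnx-z}(ii), which localizes all zeros inside the open interval $(-3-2\sqrt{2},-3+2\sqrt{2})$, this yields density of $\bigcup_n\{\text{zeros of }c_n(x)\}$ in the closed interval. I do not expect serious obstacles: the only point requiring a little care is checking that BKW's global non-degeneracy hypotheses are indeed met, and that no unexpected real point outside the interval sneaks in via $\la_1=-\la_2$, which is handled by the observation that $x=-1$ actually lies inside the interval.
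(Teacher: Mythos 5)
Your proposal is correct and follows essentially the same route as the paper: both apply the Beraha--Kahane--Weiss theorem to the Binet form of $c_n(x)$ and reduce condition (i) to $x^2+6x+1\le 0$. Your write-up is in fact more careful than the paper's on the non-degeneracy hypotheses and on excluding real points outside the interval (via $\la_1=-\la_2\Rightarrow x=-1$), but the underlying argument is identical.
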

\begin{proof}
We prove a stronger result:
each $x\in [-3-2\sqrt{2},-3+2\sqrt{2}]$ is a limit of zeros of the sequence $(c_n(x))_{n\ge 0}$ of coordination polynomials.

Recall that the Binet form of the coordination polynomials:
\begin{equation}\label{cnx-b-1}
  c_n(x)=\frac{\la_1+1}{\la_1-\la_2}\la_1^{n}-\frac{\la_2+1}{\la_1-\la_2}\la_2^{n},
\end{equation}
where
\begin{equation*}\label{la12-1}
\la_{1,2}=\frac{1+x\pm\sqrt{x^2+6x+1}}{2}.
\end{equation*}

The non-degeneracy conditions of Beraha-Kahane-Weiss Theorem is clearly satisfied from \eqref{cnx-b-1}.
So the limits of zeros of $(c_n(x))_{n\ge 0}$ are those real numbers $x$ for which $|\la_1|=|\la_2|$,
i.e.,
$$|\sqrt{x^2+6x+1}+(x+1)|=|\sqrt{x^2+6x+1}-(x+1)|.$$
Thus $x^2+6x+1\le 0$.
Solve this inequality to obtain $$-3-2\sqrt{2}\le x\le -3+2\sqrt{2},$$
which is what we wanted to show.
\end{proof}

Let $a(n,k)$ be a double-indexed sequence of nonnegative numbers and let
\begin{equation*}\label{pnk}
p(n,k)=\frac{a(n,k)}{\sum_{j=0}^na(n,j)}
\end{equation*}
denote the normalized probabilities.
Following Bender~\cite{Ben73},
we say that the sequence $a(n,k)$ is {\it asymptotically normal by a central limit theorem},
if
\begin{equation}\label{clt}
\lim_{n\rightarrow\infty}\sup_{x\in\mathbb{R}}\left|\sum_{k\le\mu_n+x\sigma_n}p(n,k)
-\frac{1}{\sqrt{2\pi}}\int_{-\infty}^xe^{-t^2/2}dt\right|=0,
\end{equation}
where $\mu_n$ and $\sigma^2_n$ are the mean and variance of $a(n,k)$, respectively.
We say that $a(n,k)$ is {\it asymptotically normal by a local limit theorem} on $\mathbb{R}$ if
\begin{equation}\label{llt}
\lim_{n\rightarrow\infty}\sup_{x\in\mathbb{R}}\left|\sigma_np(n,\lfloor\mu_n+x\sigma_n\rfloor)-\frac{1}{\sqrt{2\pi}}e^{-x^2/2}\right|=0.
\end{equation}
In this case,
\begin{equation*}\label{asy}
a(n,k)\sim \frac{e^{-x^2/2}\sum_{j=0}^na(n,j)}{\sigma_n\sqrt{2\pi}} \textrm{ as } n\rightarrow \infty,
\end{equation*}
where $k=\mu_n+x\sigma_n$ and $x=O(1)$.
Clearly, the validity of \eqref{llt} implies that of \eqref{clt}.

Many well-known combinatorial sequences enjoy central and local limit theorems.
For example, the famous de Movior-Laplace theorem states that
the binomial coefficients $\binom{n}{k}$
are asymptotically normal (by central and local limit theorems).
Other examples include
the signless Stirling numbers $c(n,k)$ of the first kind,
the Stirling numbers $S(n,k)$ of the second kind,
and the Eulerian numbers $A(n,k)$ (see \cite{Can15} for instance).
A standard approach to demonstrating asymptotic normality is the following criterion
(see \cite[Theorem 2]{Ben73} for instance).

\begin{lem}\label{lem-rzv}
Suppose that $A_n(x)=\sum_{k=0}^na(n,k)x^k$ have only real zeros
and $A_n(x)=\prod_{i=1}^n(x+t_{n,i})$,
where all $t_{n,i}$ are nonnegative.
Let $$\mu_n=\sum_{i=1}^n\frac{1}{1+t_{n,i}}$$
and
\begin{equation}\label{s-2}
\sigma^2_n=\sum_{i=1}^n\frac{t_{n,i}}{(1+t_{n,i})^2}.
\end{equation}
Then if $\sigma_n^2\rightarrow+\infty$ as $n\rightarrow+\infty$,
the numbers $a(n,k)$ are asymptotically normal (by central and local limit theorems)
with the mean $\mu_n$ and variance $\sigma_n^2$.
\end{lem}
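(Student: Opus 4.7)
The plan is to realize the normalized array $p(n,k)$ as the distribution of a sum of independent Bernoulli random variables, and then deduce \eqref{clt} and \eqref{llt} from classical limit theorems for such sums. Since $A_n(x)=\prod_{i=1}^n(x+t_{n,i})$ with $t_{n,i}\ge 0$, setting $p_{n,i}=1/(1+t_{n,i})\in[0,1]$ yields
\begin{equation*}
\frac{A_n(x)}{A_n(1)}=\prod_{i=1}^n\frac{x+t_{n,i}}{1+t_{n,i}}=\prod_{i=1}^n\bigl(p_{n,i}\,x+(1-p_{n,i})\bigr),
\end{equation*}
which is precisely the probability generating function of $X_n=Y_{n,1}+\cdots+Y_{n,n}$, where the $Y_{n,i}$ are independent Bernoulli variables with $\mathbb{P}[Y_{n,i}=1]=p_{n,i}$. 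Consequently $\mathbb{P}[X_n=k]=p(n,k)$, with mean $\mathbb{E}[X_n]=\sum_i p_{n,i}=\mu_n$ and variance $\mathrm{Var}(X_n)=\sum_i p_{n,i}(1-p_{n,i})=\sigma_n^2$, in agreement with \eqref{s-2}.

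The central limit theorem \eqref{clt} would then follow from Lyapunov's criterion. Since $|Y_{n,i}-p_{n,i}|\le 1$, one has $\mathbb{E}\,|Y_{n,i}-p_{n,i}|^3\le \mathrm{Var}(Y_{n,i})$, so the Lyapunov ratio is bounded above by $\sigma_n^2/\sigma_n^3=1/\sigma_n\to 0$ under the hypothesis $\sigma_n\to\infty$. Standard CLT machinery then converts this into the uniform convergence of distribution functions asserted in \eqref{clt}.

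The main obstacle is the local limit theorem \eqref{llt}, which is substantially more delicate than the CLT. My approach would be Gnedenko's local limit theorem for independent integer-valued summands of maximal span $1$, executed through Fourier inversion of the characteristic function $\varphi_n(t)=\mathbb{E}\exp\!\bigl(it(X_n-\mu_n)/\sigma_n\bigr)$. The elementary identity
\begin{equation*}
|p\,e^{iu}+(1-p)|^2=1-2p(1-p)(1-\cos u)
\end{equation*}
produces the key estimate $|\varphi_n(t)|\le\exp\bigl(-\sigma_n^2(1-\cos(t/\sigma_n))\bigr)$, which is Gaussian-like for small $|t|$ and exponentially small in $\sigma_n^2$ on any annulus $\delta\le |t|\le \pi\sigma_n$. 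Combining this tail bound with the CLT expansion of $\varphi_n(t)$ near the origin and applying the inversion formula
\begin{equation*}
\sigma_n\,p(n,k)=\frac{1}{2\pi}\int_{-\pi\sigma_n}^{\pi\sigma_n}e^{-itx}\varphi_n(t)\,dt\qquad(k=\lfloor\mu_n+x\sigma_n\rfloor)
\end{equation*}
yields a uniform comparison with $\frac{1}{\sqrt{2\pi}}e^{-x^2/2}$, which is exactly \eqref{llt}. A minor technical point is to verify the span-$1$ non-degeneracy: this is automatic unless $\sigma_n\not\to\infty$, so the hypothesis of the lemma rules out degenerate behaviour.
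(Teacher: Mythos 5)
The paper does not actually prove this lemma: it is quoted as a known criterion with a citation to Bender (Theorem 2 of \cite{Ben73}), so there is no in-paper argument to compare against, and your task amounted to reproving Bender's theorem. Your outline is the standard and correct way to do that. The factorization $A_n(x)/A_n(1)=\prod_{i=1}^n\bigl(p_{n,i}x+(1-p_{n,i})\bigr)$ with $p_{n,i}=1/(1+t_{n,i})$ does identify $p(n,k)$ with the law of a sum $X_n$ of independent Bernoulli variables, and its mean and variance are exactly $\mu_n$ and $\sigma_n^2$; the bound $\mathbb{E}|Y_{n,i}-p_{n,i}|^3\le p_{n,i}(1-p_{n,i})$ makes the Lyapunov ratio at most $1/\sigma_n$, giving \eqref{clt}; and Fourier inversion with $|\varphi_n(t)|\le\exp\bigl(-\sigma_n^2(1-\cos(t/\sigma_n))\bigr)$ gives \eqref{llt}. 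Two points should be tightened. First, on $\delta\le|t|\le\pi\sigma_n$ the usable bound is $\exp(-2t^2/\pi^2)$ (from $1-\cos u\ge 2u^2/\pi^2$ for $|u|\le\pi$), which near $|t|=\delta$ is only a fixed constant, not ``exponentially small in $\sigma_n^2$''; this is still sufficient, because the inversion argument only needs $\int_{T\le|t|\le\pi\sigma_n}|\varphi_n(t)|\,dt\le\int_{|t|\ge T}e^{-2t^2/\pi^2}\,dt$ to be small uniformly in $n$ for large $T$, together with $\varphi_n\to e^{-t^2/2}$ uniformly on $|t|\le T$. Second, in \eqref{llt} the argument is $\lfloor\mu_n+x\sigma_n\rfloor$, so after inversion you compare the Gaussian density at $x_k=(k-\mu_n)/\sigma_n$ with its value at $x$; the discrepancy is $O(1/\sigma_n)$ by uniform continuity and should be noted. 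With these repairs your proposal is a complete proof, essentially the same Bernoulli-decomposition argument (Harper's method) that underlies the cited result of Bender.
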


\begin{prop}\label{an-tT}
Suppose that $a(n,k)$ are nonnegative
and $A_n(x)=\sum_{k=0}^na(n,k)x^k$ have only real zeros.
If there are two positive numbers $t_0$ and $t_1$ such that
zeros of all $A_n(x)$ are in the interval $(-t_1,-t_0)$.
then the numbers $a(n,k)$ are asymptotically normal (by central and local limit theorems).
\end{prop}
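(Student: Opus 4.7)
The plan is to deduce this immediately from Lemma \ref{lem-rzv}, whose only nontrivial hypothesis is $\sigma_n^2 \to +\infty$. First I would reduce to the monic case: since the normalized probabilities $p(n,k)$, the mean $\mu_n$, and the variance $\sigma_n^2$ are all invariant under multiplying $A_n(x)$ by a positive scalar, I may assume without loss of generality that $a(n,n)=1$, so that $A_n(x)=\prod_{i=1}^n(x+t_{n,i})$ with $-t_{n,i}$ equal to the zeros of $A_n(x)$. By hypothesis every zero lies in $(-t_1,-t_0)$ with $0<t_0<t_1$, hence $t_{n,i}\in(t_0,t_1)$ for all $n$ and $i$.

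Next I would estimate \eqref{s-2} from below. The function
\[
\varphi(t)=\frac{t}{(1+t)^2}
\]
is continuous and strictly positive on the compact interval $[t_0,t_1]$, so it attains a positive minimum $c:=\min_{t\in[t_0,t_1]}\varphi(t)>0$. Consequently
\[
\sigma_n^2=\sum_{i=1}^n\frac{t_{n,i}}{(1+t_{n,i})^2}\ge nc\longrightarrow+\infty \quad\text{as } n\to\infty.
\]
All hypotheses of Lemma \ref{lem-rzv} are therefore satisfied, and we conclude that the numbers $a(n,k)$ are asymptotically normal by both a central and a local limit theorem, with mean $\mu_n$ and variance $\sigma_n^2$ as in the statement of that lemma.

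There is essentially no obstacle here; the entire argument is a one-line bound once the right lower estimate for $\varphi(t)$ on $[t_0,t_1]$ is observed. The only subtlety worth mentioning is the monic normalization at the start, which is harmless because all quantities appearing in the definitions \eqref{clt} and \eqref{llt} of asymptotic normality depend on the coefficients of $A_n(x)$ only through their ratios.
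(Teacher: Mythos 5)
Your proof is correct and follows essentially the same route as the paper: both verify the single nontrivial hypothesis of Lemma \ref{lem-rzv} by noting that each summand $t_{n,i}/(1+t_{n,i})^2$ in \eqref{s-2} is bounded below by a positive constant (the paper uses the explicit bound $t_0/(1+t_1)^2$, you use the minimum of $t/(1+t)^2$ on $[t_0,t_1]$), so $\sigma_n^2$ grows at least linearly. Your extra remark on the monic normalization is a harmless clarification that the paper leaves implicit.
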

\begin{proof}
Since $t_{n,i}\in (t_0,t_1)$, we have by \eqref{s-2}
$$
\sigma^2_n=\sum_{i=1}^n\frac{t_{n,i}}{(1+t_{n,i})^2}
>\sum_{i=1}^n\frac{t_0}{(1+t_1)^2}=\frac{t_0}{(1+t_1)^2}n.
$$
Thus $\sigma_n^2\rightarrow+\infty$ as $n\rightarrow+\infty$,
and the numbers $a(n,k)$ are therefore asymptotically normal (by central and local limit theorems)
by Lemma \ref{lem-rzv}.
\end{proof}

Recall that all zeros of $c_n(x)$
are real and in the interval $(-3-2\sqrt{2},-3+2\sqrt{2})$.
So the following result is immediate.

\begin{coro}\label{cnk-an}
The coordination numbers $c(n,k)$
are asymptotically normal (by central and local limit theorems).
\end{coro}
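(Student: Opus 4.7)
The plan is to recognize this corollary as a direct application of Proposition \ref{an-tT} to the sequence of coordination polynomials, so essentially all the real work has already been done in the preceding Theorem \ref{cnx-z} and the general asymptotic-normality criterion. My task is just to check that the hypotheses of Proposition \ref{an-tT} are met and extract the conclusion for the numbers $c(n,k)$.

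First I would recall that by definition $c_n(x)=\sum_{k=0}^n c(n,k)x^k$ has nonnegative coefficients (they are the coordination numbers, which are visibly nonnegative integers). Next I would invoke Theorem \ref{cnx-z}, which asserts two things simultaneously: every $c_n(x)$ lies in $\rz$, and all its zeros are contained in the open interval $(-3-2\sqrt{2},\,-3+2\sqrt{2})$. The crucial observation is that both endpoints of this interval are negative, since $3-2\sqrt{2}>0$. Writing $t_0=3-2\sqrt{2}$ and $t_1=3+2\sqrt{2}$, we therefore have $0<t_0<t_1$ and all zeros of every $c_n(x)$ lie in $(-t_1,-t_0)$.

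With these hypotheses in hand, Proposition \ref{an-tT} applies directly to $A_n(x)=c_n(x)$ and $a(n,k)=c(n,k)$, yielding that $c(n,k)$ is asymptotically normal by both central and local limit theorems, with variance $\sigma_n^2\to\infty$ controlled from below by $\tfrac{t_0}{(1+t_1)^2}\,n = \tfrac{3-2\sqrt{2}}{(4+2\sqrt{2})^2}\,n$.

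There is essentially no obstacle here: the substantive content, namely the real-rootedness of $c_n(x)$ and the uniform interval localization of its zeros away from $0$, has already been established in Theorem \ref{cnx-z}, and the passage from real zeros in a bounded-away-from-zero interval to asymptotic normality is exactly what Proposition \ref{an-tT} was set up to do. If anything, the only thing worth verifying carefully in writing is the sign check $t_0>0$, which is what makes this corollary immediate rather than requiring an additional variance estimate.
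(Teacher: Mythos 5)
Your proposal is correct and follows exactly the paper's route: it invokes Theorem \ref{cnx-z} for the real-rootedness and the localization of the zeros of $c_n(x)$ in $(-3-2\sqrt{2},-3+2\sqrt{2})$, and then applies Proposition \ref{an-tT} with $t_0=3-2\sqrt{2}$ and $t_1=3+2\sqrt{2}$. The explicit sign check $t_0>0$ and the variance lower bound are welcome but not different in substance from the paper's (one-line) argument.
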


\begin{rem}
Similarly,
the Delannoy numbers $d(n,k)$ are asymptotically normal,
which has been obtained in \cite[Theorem 3.2]{WZC19}
by means of the explicit expression \eqref{dnx-z} of the zeros of $d_n(x)$.

On the other hand,
$s_n(x)=xc_{n-1}(x)$,
the coordination numbers $s(n,k)$ are also asymptotically normal by Lemma \ref{lem-rzv}.
\end{rem}

\begin{rem}\label{c-mean}
Note that in Lemma \ref{lem-rzv},
the mean $\mu_n$ has alternative expression $A'_n(1)/A_n(1)$.
Now the associated means of the numbers $d(n,k)$ is $n/2$
since $d(n,k)=d(n,n-k)$ for $0\le k\le n$.
Hence $d'_n(1)/d_n(1)=n/2$.
Recall that
$c_n(x)=d_n(x)+d_{n-1}(x)$
and $c_n(1)/d_n(1)\rightarrow\sqrt{2}$.
The associated means of the numbers $c(n,k)$
$$\mu_n=\frac{c'_n(1)}{c_n(1)}
=\frac{d'_n(1)+d'_{n-1}(1)}{d_n(1)+d_{n-1}(1)}
=\frac{\frac{n}{2}d_n(1)+\frac{n-1}{2}d_{n-1}(1)}{d_n(1)+d_{n-1}(1)}
=\frac{n-1}{2}+\frac{1}{2}\frac{d_n(1)}{c_n(1)}
=\frac{n-1}{2}+\frac{1}{2\sqrt{2}}+\mathrm{o}(1).$$
Similarly, the associated means of the numbers $s(n,k)$
$$\mu_n=\frac{s'_n(1)}{s_n(1)}=\frac{c'_{n-1}(1)+c_{n-1}(1)}{c_{n-1}(1)}
=\frac{c'_{n-1}(1)}{c_{n-1}(1)}+1=\frac{n}{2}+\frac{1}{2\sqrt{2}}+\mathrm{o}(1).$$
\end{rem}

\subsection{Log-concavity and log-convexity of coordination sequences}

Let $(a_k)_{k\ge 0}$ be a sequence of nonnegative numbers.
We say that the sequence is {\it log-concave} if $a_{k-1}a_{k+1}\le a_k^2$ for $k\ge 1$,
and {\it unimodal} if
$$a_0\le a_1\le\cdots\le a_m\ge a_{m+1}\ge\cdots$$
for some $m$ ($m$ is called a {\it mode} of the sequence).
It is well known \cite{Bre89} that
a P\'olya frequency sequence is log-concave, and that
a log-concave sequence with no internal zeros is unimodal.

\begin{prop}
For each $k\ge 0$,
the sequence
$C(0,k),C(1,k),C(2,k),\ldots$
is 
log-concave.
\end{prop}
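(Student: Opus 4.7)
The plan is to show that each column sequence $(C(n,k))_{n\ge 0}$ is a P\'olya frequency (PF) sequence, and then extract log-concavity as an immediate consequence of the nonnegativity of a single $2\times 2$ Toeplitz minor.

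The generating function of the $k$th column of $C$ is
$$C_k(x)=S_{k+1}(x)=\left(\frac{1+x}{1-x}\right)^{k+1}=\frac{(1+x)^{k+1}}{(1-x)^{k+1}}.$$
This is precisely the Schoenberg-Edrei form with $a=1$, $m=\gamma=0$, with $k+1$ of the $\alpha_j$ equal to $1$ and $k+1$ of the $\beta_j$ equal to $1$ (and the remaining parameters zero), so $\sum_j(\alpha_j+\beta_j)=2(k+1)<+\infty$. Hence $(C(n,k))_{n\ge 0}$ is PF. Equivalently, and without invoking Schoenberg-Edrei, one may observe that $\frac{1+x}{1-x}$ generates the PF sequence $\xi=(1,2,2,2,\ldots)$ whose TP Toeplitz matrix $T(\xi)$ is already exhibited in the paper; since $T(\xi)^{k+1}$ is the Toeplitz matrix of the $(k+1)$-fold convolution of $\xi$, whose generating function is $C_k(x)$, and since products of TP matrices are TP by the Cauchy-Binet formula, the same PF conclusion follows.

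By definition, PF means that the Toeplitz matrix $T=[C(i-j,k)]_{i,j\ge 0}$ (with $C(i-j,k)=0$ for $i<j$) is totally positive. Taking its $2\times 2$ minor on rows $\{n-1,n\}$ and columns $\{0,1\}$ gives
$$\begin{vmatrix} C(n-1,k) & C(n-2,k) \\ C(n,k) & C(n-1,k) \end{vmatrix}=C(n-1,k)^2-C(n-2,k)\,C(n,k)\ge 0,$$
which is exactly the log-concavity inequality for the column for $n\ge 2$; the $n=1$ instance is trivial since $C(-1,k)=0$.

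I do not foresee a meaningful obstacle: the statement follows directly from tools already set up in the paper, and the only choice is between invoking Schoenberg-Edrei for the PF property or arguing elementarily via Cauchy-Binet applied to powers of the already-displayed TP Toeplitz matrix $T(\xi)$. Either route converts the problem into the nonnegativity of one explicit $2\times 2$ minor.
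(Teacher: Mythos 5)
Your proof is correct and follows essentially the same route as the paper's: identify the column generating function $\left(\frac{1+x}{1-x}\right)^{k+1}$, conclude the column is a P\'olya frequency sequence, and deduce log-concavity (the paper simply cites the standard fact that PF implies log-concave, while you spell out the relevant $2\times 2$ Toeplitz minor). Incidentally, your exponent $k+1$ is the correct one, since $C(n,k)=S(n,k+1)$; the paper's displayed exponent $k$ appears to be a typo, and this does not affect the argument.
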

\begin{proof}
The sequence is the $k$th column of the coordination matrix $C$ and
has the generating function
$$\sum_{n\ge 0}C(n,k)x^k=\left(\frac{1+x}{1-x}\right)^k.$$
Thus the sequence is PF and therefore log-concave.
\end{proof}

\begin{prop}
For each $n\ge 1$,
the sequence $C(n,0),C(n,1),C(n,2),\ldots$ is log-concave.
\end{prop}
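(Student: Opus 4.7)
The plan is to mimic the strategy of the preceding proposition (the column case) by exhibiting the row generating function of $C$ explicitly and then invoking the Schoenberg--Edrei theorem, since P\'olya frequency implies log-concavity.

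First I would compute $C_n(y):=\sum_{k\ge 0}C(n,k)y^k$ for fixed $n\ge 1$. Using Proposition \ref{C-N}(ii), we have $C(n,k)=D(n,k)+D(n-1,k)$, so $C_n(y)=D_n(y)+D_{n-1}(y)$, where $D_n(y):=\sum_{k\ge 0}D(n,k)y^k$. Because the Delannoy numbers are symmetric ($D(n,k)=D(k,n)$, which is visible from the symmetric bivariate generating function \eqref{Dxy}), the row generating function in $y$ is obtained from the already computed column generating function by swapping variables:
\begin{equation*}
D_n(y)=\sum_{k\ge 0}D(k,n)y^k=\frac{(1+y)^n}{(1-y)^{n+1}}.
\end{equation*}

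Next I would combine these two pieces. A short computation gives, for $n\ge 1$,
\begin{equation*}
C_n(y)=\frac{(1+y)^n}{(1-y)^{n+1}}+\frac{(1+y)^{n-1}}{(1-y)^n}
=\frac{(1+y)^{n-1}}{(1-y)^n}\left(\frac{1+y}{1-y}+1\right)
=\frac{2(1+y)^{n-1}}{(1-y)^{n+1}}.
\end{equation*}
This rational function is precisely of the Schoenberg--Edrei form with $a=2$, $m=0$, $\gamma=0$, finitely many $\alpha_j=1$ (from the numerator) and finitely many $\beta_j=1$ (from the denominator), all nonnegative with finite sum. Hence the Schoenberg--Edrei theorem applies and the sequence $(C(n,k))_{k\ge 0}$ is a P\'olya frequency sequence, which in turn implies log-concavity (a standard consequence of PF, as the paper recalls from Brenti). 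This handles $n\ge 1$; the case $n=0$ is the trivial constant row $(1,1,1,\ldots)$ if one wishes to include it.

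No step looks like a genuine obstacle; the only thing that requires a moment's care is invoking the symmetry of $D$ to turn the known column generating function into the needed row generating function, rather than attempting to sum $D(n,k)=\sum_i\binom{n}{i}\binom{k}{i}2^i$ directly over $k$. Once $C_n(y)$ is in the closed form $2(1+y)^{n-1}/(1-y)^{n+1}$, the conclusion is immediate from Schoenberg--Edrei exactly as in the previous proposition.
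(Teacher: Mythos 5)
Your proof is correct and follows essentially the same route as the paper: both arrive at the closed form $\sum_{k\ge 0}C(n,k)y^k=2(1+y)^{n-1}/(1-y)^{n+1}$ and conclude via the Schoenberg--Edrei theorem that the row is a P\'olya frequency sequence, hence log-concave. The only (harmless) difference is in how the closed form is obtained: the paper extracts $[x^n]$ directly from the bivariate generating function $C(x,y)=(1+x)/(1-x-y-xy)$, whereas you combine $C(n,k)=D(n,k)+D(n-1,k)$ with the symmetry of the Delannoy numbers.
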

\begin{proof}
The sequence is the $n$th row of the coordination matrix $C$ and
has the generating function
$$\sum_{k\ge 0}C(n,k)y^k=[x^n]C(x,y)=[x^n]\left(\frac{1+x}{1-x-y-xy}\right)=\frac{2(1+y)^{n-1}}{(1-y)^{n+1}}.$$
Thus the sequence is PF and therefore log-concave.
\end{proof}

The following is a classical approach for attacking the unimodality and log-concavity problem of a finite sequence
(see \cite{Bre89} for instance).

\begin{NI}
Suppose that the polynomial $f(x)=\sum_{k=0}^na_kx^k$ has only real zeros. Then
$$a_k^2\ge a_{k-1}a_{k+1}\frac{(k+1)(n-k+1)}{k(n-k)},\qquad k=1,2,\ldots,n-1.$$
\end{NI}

\begin{rem}
If all coefficients $a_k$ are nonnegative,
then the sequence $a_0,a_1,\ldots,a_n$ 
is log-concave and unimodal with at most two modes.
Darroch \cite{Dar64} showed that each mode $m$ of such a sequence satisfies
$\left|m-f'(1)/f(1)\right|<1$.
\end{rem}

We have proved that the coordination polynomials $c_n(x)=\sum_{k=0}^nc(n,k)x^k$ have only real zeros.
The sequence $c(n,0),c(n,1),\ldots,c(n,n)$ is therefore log-concave and unimodal with at most two modes.
Furthermore, each mode $m$ satisfies $\left|m-c'_n(1)/c_n(1)\right|<1$.
It follows from Remark \ref{c-mean} that $\lrf{(n-1)/2}\le m\le\lrc{(n+1)/2}$.
We have the following precise result.

\begin{prop}
For each $n\ge 1$, the sequence $C(n,0),C(n-1,1),C(n-2,2),\ldots,C(0,n)$
is log-concave and unimodal with the unique mode $\lrf{n/2}$.
\end{prop}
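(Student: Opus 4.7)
The sequence $C(n,0), C(n-1,1), \ldots, C(0,n)$ is precisely the coefficient sequence of the coordination polynomial $c_n(x) = \sum_{k=0}^n c(n,k) x^k$, since $c(n,k) = C(n-k,k)$. Because Theorem~\ref{cnx-z} guarantees that $c_n(x)$ has only real, simple, negative zeros, the Newton inequality immediately gives log-concavity, and hence unimodality with at most two modes. The substantive task is thus to identify the unique mode as $\lfloor n/2\rfloor$ and rule out the possibility of a pair of equal adjacent maxima.

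My plan is to exploit the decomposition $c_n(x) = d_n(x) + d_{n-1}(x)$ from \eqref{cnx-dnx}, equivalently $c(n,k) = d(n,k) + d(n-1,k)$ with the convention $d(n-1,n) = 0$. Two ingredients control the summands on the right. First, the bivariate generating function $D(x,y) = 1/(1-x-y-xy)$ is symmetric in $x$ and $y$, so $D(n,k) = D(k,n)$, which via $d(n,k) = D(n-k,k) = D(k,n-k) = d(n,n-k)$ makes every Delannoy polynomial $d_n(x)$ palindromic. Second, the explicit zero formula \eqref{dnx-z} shows that the zeros of $d_n(x)$ are real and pairwise distinct, so a strict form of Newton's inequality applies and the ratios $d(n,k+1)/d(n,k)$ are strictly decreasing in $k$. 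Combining these two facts pins down the modal structure of $d_n$ exactly: for even $n=2m$, the unique mode is at $k=m$, with strict increase on $[0,m]$ and strict decrease on $[m,n]$; for odd $n=2m+1$, the two modes are at $k=m$ and $k=m+1$ with $d(n,m) = d(n,m+1)$, with strict increase on $[0,m]$ and strict decrease on $[m+1,n]$.

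A brief case analysis on the parity of $n$ then finishes the proof. For even $n=2m$, both summands $d_n$ and $d_{n-1}$ (the latter with parameter $2m-1$ odd) are non-decreasing up to $k=m$ with $d_n$ strictly so, and both strictly decreasing from $k=m$ onward; adding termwise forces $c(n,\cdot)$ to be strictly increasing on $[0,m]$ and strictly decreasing on $[m,n]$. For odd $n=2m+1$, the summand $d_n$ is flat between $k=m$ and $k=m+1$, but $d_{n-1}$ (whose parameter $2m$ is even) has a strict peak at $k=m$, so adding breaks the tie and gives $c(n,m+1) < c(n,m)$; strict increase on $[0,m]$ follows because both summands are strictly increasing there. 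In either case the unique mode is $m = \lfloor n/2\rfloor$.

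The main obstacle, which this decomposition is designed to resolve, is uniqueness: Newton's inequality together with Darroch's bound $\lvert m - c_n'(1)/c_n(1) \rvert < 1$ can localize the mode(s) near $n/2$ but does not by itself exclude a pair of adjacent equal maxima. The essential observation is a parity mismatch — whenever one of $d_n, d_{n-1}$ could tie at consecutive coefficients, the other is strict at that very spot — so every potential tie in $c(n,\cdot)$ is broken by the companion summand.
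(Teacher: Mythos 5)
Your proposal is correct and follows essentially the same route as the paper: write $c(n,k)=d(n,k)+d(n-1,k)$, use the fact that each Delannoy row is strictly unimodal with its peak(s) exactly in the middle (a tie only at the center when the row index is odd), and observe that the parity mismatch between $d_n$ and $d_{n-1}$ breaks any tie, forcing the unique mode $\lfloor n/2\rfloor$. The only differences are cosmetic: you justify the Delannoy-row structure explicitly via the symmetry $d(n,k)=d(n,n-k)$ together with strict log-concavity from Newton's inequality (the paper simply asserts these chains), and you treat both parities rather than only the odd case.
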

\begin{proof}
Note that $C(n-k,k)=c(n,k)$ for $k=0,1,\ldots,n$.
Hence the sequence
is log-concave and unimodal.
We next show that the sequence 
has the unique mode $\lrf{n/2}$.
We prove this only for the case $n$ odd, since the proof is similar for the case $n$ even.
Let $n=2m+1$.
Note that
$$d(n,0)<\cdots<d(n,m-1)<d(n,m)=d(n,m+1)>\cdots>d(n,n-1)>d(n,n)$$
and
$$d(n-1,0)<\cdots<d(n-1,m-1)=d(n-1,m)>d(n-1,m+1)>\cdots>d(n-1,n-1).$$
Also, $c(n,k)=d(n,k)+d(n-1,k)$ for $0\le k\le n-1$ and $c(n,n)=d(n,n)$.
Hence
$$c(n,0)<\cdots<c(n,m-1)<c(n,m)>c(n,m+1)>\cdots>c(n.n-1)>c(n,n).$$
Thus the sequence $c(n,0),c(n,1),\ldots,c(n,n)$ has the unique mode $m$, as required.
\end{proof}

We say that a sequence $(a_n)_{n\ge 0}$ of positive numbers is
{\it log-convex} if $a_{n-1}a_{n+1}\ge a_n^2$ for $n\ge 1$,
and {\it strictly log-convex} if $a_{n-1}a_{n+1}>a_n^2$ for $n\ge 1$.
It is well known that the sequence $(D_n)_{n\ge 0}$ of the central Delannoy numbers is log-convex
(see \cite{LW-lcx} for instance).
We next consider the log-convexity of the diagonal sequences
$(S_n)_{n\ge 0}$ and $(C_n)_{n\ge 0}$
in coordination matrices $S$ and $C$.
Recall that the sequences $(D_n)_{n\ge 0}$, $(S_n)_{n\ge 0}$ and $(C_n)_{n\ge 0}$
satisfy a three-term recurrence relation respectively.
We establish a criterion for the log-convexity of such sequences.

\begin{lem}\label{3rr-lcx}
Let $(z_n)_{n\ge 0}$ be a sequence of positive numbers and satisfy the recurrence relation
$$a_nz_n=b_nz_{n-1}-c_nz_{n-2},$$
where $a_n>0$ and $b_n,c_n\ge0$ for $n\ge 2$.
Assume that $z_1>z_0$ and $z_0z_2>z_1^2$.
If
$$\left|
\begin{array}{cc}
a_n & a_{n+1}\\
b_n & b_{n+1}\\
\end{array}
\right|\ge
\max\left\{0,
\left|
\begin{array}{cc}
a_n & a_{n+1}\\
c_n & c_{n+1}\\
\end{array}
\right|\right\}$$
for $n\ge 2$,
then the sequence $(z_n)_{n\ge 0}$ is strictly log-convex.
\end{lem}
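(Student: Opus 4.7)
The natural approach is induction on $n\ge 1$ showing the strict inequality $\Delta_n:=z_{n-1}z_{n+1}-z_n^2>0$, which is precisely what is meant by strict log-convexity. Abbreviate $\delta_n:=a_nb_{n+1}-a_{n+1}b_n$ and $\epsilon_n:=a_nc_{n+1}-a_{n+1}c_n$, so the hypothesis reads $\delta_n\ge\max\{0,\epsilon_n\}$ for $n\ge 2$. The base case $n=1$ is the assumption $z_0z_2>z_1^2$. Along the way I would maintain the auxiliary invariant that the ratio $r_k:=z_k/z_{k-1}$ strictly exceeds $1$; this holds initially by $z_1>z_0$, and is propagated by the induction since $\Delta_k>0$ combined with $r_k>1$ yields $r_{k+1}=z_{k+1}/z_k>z_k/z_{k-1}=r_k>1$.

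For the inductive step, the plan is to extract a single identity by using the recurrence at both indices $n$ and $n+1$. Multiplying $a_{n+1}z_{n+1}=b_{n+1}z_n-c_{n+1}z_{n-1}$ by $a_nz_{n-1}$, multiplying $a_nz_n=b_nz_{n-1}-c_nz_{n-2}$ by $a_{n+1}z_n$, and subtracting produces $a_na_{n+1}\Delta_n$ on the left. On the right, the routine rearrangement $a_{n+1}c_nz_{n-2}z_n=a_{n+1}c_n\Delta_{n-1}+a_{n+1}c_nz_{n-1}^2$ allows me to pull out the previous Tur\'an expression, yielding the clean identity
\begin{equation*}
a_na_{n+1}\Delta_n=z_{n-1}\bigl(\delta_nz_n-\epsilon_nz_{n-1}\bigr)+a_{n+1}c_n\Delta_{n-1}.
\end{equation*}

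The remainder is a short sign analysis. The term $a_{n+1}c_n\Delta_{n-1}$ is nonnegative by the inductive hypothesis and $c_n\ge 0$. For the other term, I split on the sign of $\epsilon_n$: if $\epsilon_n\le 0$ then $-\epsilon_nz_{n-1}\ge 0$ and $\delta_nz_n\ge 0$, while if $\epsilon_n>0$ then $\delta_n\ge\epsilon_n>0$ and the invariant $r_n>1$ gives $\delta_nz_n-\epsilon_nz_{n-1}=z_{n-1}(\delta_nr_n-\epsilon_n)>z_{n-1}(\delta_n-\epsilon_n)\ge 0$. In either case the right-hand side is nonnegative, and strict positivity of $\Delta_n$ follows from the strict inductive hypothesis $\Delta_{n-1}>0$ together with the strict invariant $r_n>1$.

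The main obstacle I anticipate is simply pinning down the key identity above, since the linear combination of the two recursions must be arranged so that the coefficient of $z_{n-1}^2$ is exactly $-\epsilon_n$ (which is precisely what forces the hypothesis $\delta_n\ge\epsilon_n$) and so that the residual term reduces cleanly to $\Delta_{n-1}$. Once the identity is in hand, the sign analysis and the propagation of $r_n>1$ are routine.
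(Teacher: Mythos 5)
Your proof is correct and is essentially the paper's argument in a different notation: the paper works with the ratios $x_n=z_n/z_{n-1}$, shows $x_{n+1}>x_n>1$ by induction, and multiplying its expression for $x_{m+1}-x_m$ through by $a_ma_{m+1}z_{m-1}z_m$ yields exactly your identity $a_na_{n+1}\Delta_n=z_{n-1}(\delta_nz_n-\epsilon_nz_{n-1})+a_{n+1}c_n\Delta_{n-1}$. Your sign analysis (splitting on the sign of $\epsilon_n$ and using the invariant $z_n/z_{n-1}>1$ together with $\Delta_{n-1}>0$) likewise matches the paper's.
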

\begin{proof}
Let $x_n={z_n}/{z_{n-1}}$ for $n\ge 1$.
Then $x_n=\frac{b_n}{a_n}-\frac{c_n}{a_n}\cdot\frac{1}{x_{n-1}}$ for $n\ge 2$.
To prove the strict log-convexity of the sequence $(z_n)_{n\ge 0}$,
it suffices to prove that the sequence $(x_n)_{n\ge 1}$ is increasing.
We do this by showing that $x_{n+1}>x_n>1$ by induction on $n\ge 1$.

Clearly, $x_2>x_1>1$ by the assumption $z_1>z_0$ and $z_0z_2>z_1^2$.
Assume now that $x_m>x_{m-1}>1$ for some $m\ge 2$.
Then
\begin{eqnarray*}
x_{m+1}-x_m
&=& \left[\frac{b_{m+1}}{a_{m+1}}-\frac{c_{m+1}}{a_{m+1}}\cdot\frac{1}{x_{m}}\right]
-\left[\frac{b_m}{a_m}-\frac{c_m}{a_m}\cdot\frac{1}{x_{m-1}}\right] \\
&=& \frac{\left[(a_mb_{m+1}-a_{m+1}b_m)x_m-(a_mc_{m+1}-a_{m+1}c_m)\right]}{a_ma_{m+1}x_m}
+\frac{c_m}{a_m}\left(\frac{1}{x_{m-1}}-\frac{1}{x_m}\right)\\
&\ge & \frac{(a_mb_{m+1}-a_{m+1}b_m)-(a_mc_{m+1}-a_{m+1}c_m)}{a_ma_{m+1}x_m}
+\frac{c_m}{a_m}\cdot\frac{x_m-x_{m-1}}{x_{m-1}x_m}.
\end{eqnarray*}
Thus $x_{m+1}-x_m>0$,
and so $x_{m+1}>x_m>1$, as required.
\end{proof}

\begin{prop}
The sequences $(D_n)_{n\ge 0}, (S_n)_{n\ge 0}$ and $(C_n)_{n\ge 0}$ are all strictly log-convex.
\end{prop}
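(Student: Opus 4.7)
The plan is to apply Lemma \ref{3rr-lcx} to each of the three sequences using the three-term recurrences already recorded in Proposition \ref{SC-p}. For each sequence, the three verifications needed are: (a) the initial inequality $z_1>z_0$; (b) the base log-convexity $z_0z_2>z_1^2$; and (c) the determinant condition
\[
\begin{vmatrix} a_n & a_{n+1}\\ b_n & b_{n+1}\end{vmatrix}
\ \ge\
\max\left\{0,\ \begin{vmatrix} a_n & a_{n+1}\\ c_n & c_{n+1}\end{vmatrix}\right\},\qquad n\ge 2.
\]

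For the Delannoy case I would take $a_n=n$, $b_n=3(2n-1)$, $c_n=n-1$. The base cases $D_0=1<3=D_1$ and $D_0D_2=13>9=D_1^2$ are immediate. A direct expansion gives $a_nb_{n+1}-a_{n+1}b_n=3$ and $a_nc_{n+1}-a_{n+1}c_n=1$, so the determinant condition reduces to $3\ge 1$. For the $S$-case I would take $a_n=n(2n-3)$, $b_n=4(3n^2-6n+2)$, $c_n=(n-2)(2n-1)$ and check $S_0=1<2=S_1$, $S_0S_2=8>4=S_1^2$; then I expand the two $2\times 2$ determinants as polynomials in $n$ and observe that both are positive for $n\ge 2$ and that the $b$-determinant dominates the $c$-determinant (the leading coefficients agree while lower order terms are arranged so the $b$-side wins). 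For the $C$-case I would take $a_n=n(n-1)$, $b_n=3(2n-1)(n-1)$, $c_n=n(n-2)$ and check $C_0=1<4=C_1$, $C_0C_2=18>16=C_1^2$; again both determinants are explicit low-degree polynomials in $n$, and one verifies the required inequality for $n\ge 2$.

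The main obstacle is nothing conceptual but the bookkeeping in step (c) for the $S$-sequence, where the coefficients $a_n,b_n,c_n$ are quadratic in $n$ and the two $2\times 2$ determinants are cubic polynomials; one must expand carefully and compare term by term. In each case, once (a), (b), (c) are established, Lemma \ref{3rr-lcx} yields the strict log-convexity of $(D_n)$, $(S_n)$, and $(C_n)$ respectively, completing the proof.
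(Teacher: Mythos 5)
Your proposal is correct and follows exactly the paper's route: the paper's proof simply cites the three recurrences from Proposition \ref{SC-p} and invokes Lemma \ref{3rr-lcx}, and your verifications of the initial conditions and the determinant inequalities (which do check out, e.g.\ the $b$-minus-$c$ determinant difference for the $S$-case is $2(2n-1)(2n-3)>0$ for $n\ge 2$) are precisely the details the paper leaves implicit.
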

\begin{proof}
Recall that
\begin{enumerate}[\rm (i)]
  \item $nD_n=3(2n-1)D_{n-1}-(n-1)D_{n-2}$ with $D_0=1$ and $D_1=3$.
  \item $n(2n-3)S_n=4(3n^2-6n+2)S_{n-1}-(n-2)(2n-1)S_{n-2}$ with $S_0=1$ and $S_1=2$.
  \item $n(n-1)C_n=3(2n-1)(n-1)C_{n-1}-n(n-2)C_{n-2}$ with $C_0=1$ and $C_1=4$.
\end{enumerate}
The statement follows from Lemma \ref{3rr-lcx}.
\end{proof}

A concept closely related to log-convex sequences is Stieltjes moment sequences.
Let $\alpha=(a_n)_{n\ge 0}$ be an infinite sequence of real numbers.
If there exist a nonnegative Borel measure $\mu$ on $[0,+\infty)$ such that
\begin{equation*}\label{h-i-e}
a_n=\int_{0}^{+\infty}x^nd\mu(x),
\end{equation*}
then we say that $\alpha=(a_n)_{n\ge 0}$ is a {\it Stieltjes moment} sequence (SM for short).
It is well known that a Stieltjes moment sequence is log-convex (see \cite{WZ16} for instance).

Let
$$H(\alpha)=[a_{i+j}]_{i,j\ge 0}$$
be the {\it Hankel matrix} of the sequence $\alpha$
and let
$$h_n(\alpha)=\det [a_{i+j}]_{0\le i,j\le n}$$
denote the $n$th leading principal minor of $H(\alpha)$.
The following result is well known (see \cite{Ben11} for instance).

\begin{lem}\label{Ben-lem}
Let $\alpha=(a_n)_{n\ge 0}$ and $\overline{\alpha}=(a_{n+1})_{n\ge 0}$.
\begin{enumerate}[\rm (i)]
  \item
  The sequence $\alpha$ is SM if and only if
  its Hankel matrix $H(\alpha)$ is TP.
  \item
  The sequence $\alpha$ is SM if and only if
  both $h_n(\alpha)\ge 0$ and $h_n(\overline{\alpha})\ge 0$ for all $n\ge 0$.
  \item
  If $\alpha=(a_n)_{n\ge 0}$ is SM,
  then so is $\overline{\alpha}=(a_{n+1})_{n\ge 0}$.
  \item
  If both $(a_n)_{n\ge 0}$ and $(b_n)_{n\ge 0}$ are SM,
  then so is $(aa_n+bb_n)_{n\ge 0}$ for arbitrary $a,b\ge 0$.
\end{enumerate}
\end{lem}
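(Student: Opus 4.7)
The plan is to prove the four parts in the order (ii), (i), (iii), (iv), since (ii) supplies the classical Stieltjes moment criterion on which the other three items rest. Throughout I write $H_n(\alpha)=[a_{i+j}]_{0\le i,j\le n}$ so that $h_n(\alpha)=\det H_n(\alpha)$.

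For (ii), the easy direction is quadratic-form bookkeeping. If $a_n=\int_0^{+\infty}x^n\,d\mu$, then for any real coefficients $c_0,\ldots,c_n$ one has
$$\sum_{i,j=0}^n c_ic_j a_{i+j}=\int_0^{+\infty}\Big(\sum_{i=0}^n c_i x^i\Big)^{\!2} d\mu\ge 0,$$
and the same computation applied to $x\,d\mu$ gives $\sum c_ic_j a_{i+j+1}\ge 0$. Hence $H_n(\alpha)$ and $H_n(\overline\alpha)$ are positive semidefinite and all their principal (in particular leading) minors are nonnegative. For the converse direction I would invoke the Stieltjes solution of the moment problem: construct the formal orthogonal polynomials from $(a_n)$ via a Gram--Schmidt procedure (the positivity of $h_n(\alpha)$ is what makes this legal), extract a three-term recurrence with nonnegative coefficients (the positivity of $h_n(\overline\alpha)$ forces these to have the right sign), and apply Favard's theorem to obtain a representing Borel measure $\mu$ on $[0,+\infty)$. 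This step is the main obstacle; I would not attempt to re-prove it and instead would cite the classical reference.

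Given (ii), part (i) becomes an Andr\'eief (generalized Cauchy--Binet) computation. One direction is immediate: TP forces all principal minors $h_n(\alpha)$ and $h_n(\overline\alpha)$ to be nonnegative. For the other direction, assume $a_n=\int_0^{+\infty}x^n\,d\mu$ and let $i_0<\cdots<i_k$, $j_0<\cdots<j_k$. Then
$$H(\alpha)\binom{i_0,\ldots,i_k}{j_0,\ldots,j_k}=\det\Big[\int_0^{+\infty}x^{i_p+j_q}d\mu(x)\Big]_{p,q}=\frac{1}{(k+1)!}\int_{[0,+\infty)^{k+1}}\det[x_p^{i_q}]\det[x_p^{j_q}]\prod_{p=0}^{k}d\mu(x_p).$$
Both generalized Vandermonde determinants are nonnegative on the positive orthant whenever the exponent sequences are strictly increasing, so the integrand is nonnegative and hence the minor is too. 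Thus $H(\alpha)$ is TP.

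Parts (iii) and (iv) are then measure-theoretic one-liners off the integral representation. For (iii), from $a_n=\int_0^{+\infty}x^n d\mu$ we obtain $a_{n+1}=\int_0^{+\infty}x^n(x\,d\mu)$; since $x\ge 0$ on $[0,+\infty)$, the measure $x\,d\mu$ is still nonnegative and supported on $[0,+\infty)$, so $\overline\alpha$ is SM. For (iv), with $a_n=\int x^n d\mu_1$ and $b_n=\int x^n d\mu_2$ and $a,b\ge 0$, the combination $a\mu_1+b\mu_2$ is again a nonnegative Borel measure on $[0,+\infty)$ whose $n$-th moment is $aa_n+bb_n$. Modulo the classical input flagged above, the remaining steps are routine and present no real obstacle.
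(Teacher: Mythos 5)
The paper offers no proof of this lemma at all --- it is stated as ``well known'' with a pointer to Bennett's paper on Hausdorff means and moment sequences --- so your sketch is doing strictly more work than the source. What you propose is the standard route and is essentially sound: the integral representation gives (iii) and (iv) immediately, the Andr\'eief identity reduces the total positivity of $H(\alpha)$ in (i) to the nonnegativity of $\det[x_p^{i_q}]\det[x_p^{j_q}]$ (one small imprecision: each generalized Vandermonde factor changes sign under permutation of the $x_p$, so the correct statement is that the \emph{product} is a symmetric function which is nonnegative on the ordered chamber $0\le x_0<\cdots<x_k$ and hence, by symmetry, on all of $[0,+\infty)^{k+1}$), and the easy direction of (ii) is the usual positive-semidefiniteness of the two Hankel quadratic forms. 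The one place where your sketch papers over a real difficulty is the converse of (ii): the lemma assumes only $h_n(\alpha)\ge 0$ and $h_n(\overline{\alpha})\ge 0$, and for a general symmetric matrix nonnegativity of the leading principal minors does \emph{not} imply positive semidefiniteness, while your Gram--Schmidt/Favard construction needs the nondegenerate case $h_n>0$ (the degenerate case corresponds to finitely supported measures and needs a separate argument). Since you explicitly flag this step and defer it to the classical reference --- which is exactly what the paper does for the entire lemma --- this is an acceptable resolution, but be aware that the determinant formulation with weak inequalities is precisely the nontrivial content of the cited result, not a cosmetic restatement of the positive-semidefinite criterion.
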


The sequence $(D_n)_{n\ge 0}$ of the central Delannoy numbers is SM.
Actually, the following result is known (see \cite{MWY17} for instance).

\begin{lem}\label{Dn-s}
Let $\delta=(D_n)_{n\ge 0}$ 
and $\overline{\delta}=(D_{n+1})_{n\ge 0}$.
Then
\begin{equation}\label{Dn-h}
  h_n(\delta)=2^{n(n+3)/2},\qquad h_n(\overline{\delta})=2^{n(n+3)/2}(2^{n+1}+1).
\end{equation}
\end{lem}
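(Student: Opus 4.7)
The plan is to identify the Jacobi continued fraction (J-fraction) expansion of the moment generating function $\sum_{n\ge 0} D_n x^n = 1/\sqrt{1-6x+x^2}$ and then read off both Hankel determinants via the standard dictionary between J-fractions, orthogonal polynomials, and moment matrices. Concretely, I would first establish
\[
\sum_{n\ge 0} D_n x^n \;=\; \cfrac{1}{1-3x-\cfrac{4x^2}{1-3x-\cfrac{2x^2}{1-3x-\cfrac{2x^2}{1-3x-\cdots}}}},
\]
so that the J-fraction data are $b_n = 3$ for all $n\ge 0$, $\lambda_1 = 4$, and $\lambda_n = 2$ for $n\ge 2$. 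The quickest verification is to let $F(x)$ be the formal power series fixed point $F = 1/(1-3x-2x^2 F)$; the quadratic $2x^2 F^2 - (1-3x)F + 1 = 0$ has the power-series branch $F(x) = (1-3x-\sqrt{1-6x+x^2})/(4x^2)$, and direct substitution shows $1/(1-3x-4x^2 F(x)) = 1/\sqrt{1-6x+x^2}$.

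Next I would translate the J-fraction into orthogonality. It produces monic orthogonal polynomials $P_n(x)$ obeying $P_{n+1}(x) = (x-3)P_n(x) - \lambda_n P_{n-1}(x)$ with squared norms $\gamma_n = \lambda_1\lambda_2\cdots\lambda_n$, so $\gamma_0 = 1$ and $\gamma_n = 2^{n+1}$ for $n\ge 1$. Let $\tilde M$ be the unit lower triangular matrix determined by $x^n = \sum_{k=0}^n \tilde m_{nk} P_k(x)$. Then the orthogonality $\int P_i P_j\, d\mu = \gamma_i\delta_{ij}$ and the three-term recurrence translate into the matrix identities
\[
H(\delta) = \tilde M\,\operatorname{diag}(\gamma_0,\gamma_1,\ldots)\,\tilde M^T, \qquad H(\overline{\delta}) = \tilde M\, J\, \tilde M^T,
\]
where $J = [\int x P_i P_j\, d\mu]$ is tridiagonal with diagonal $(3\gamma_i)$ and both off-diagonals $(\gamma_{i+1})$.

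Taking determinants of the $(n+1)\times(n+1)$ leading principal submatrices, the fact that $\tilde M_n$ is unit lower triangular gives $\det \tilde M_n = 1$, hence
\[
h_n(\delta) = \gamma_0\gamma_1\cdots\gamma_n = 2^{\,2+3+\cdots+(n+1)} = 2^{n(n+3)/2},
\]
which is the first formula. For the second, $h_n(\overline{\delta}) = \det J_n$, and expanding the tridiagonal determinant along the last row yields the recurrence $\det J_n = 3\cdot 2^{n+1}\det J_{n-1} - 4^{n+1}\det J_{n-2}$ for $n\ge 2$, with $\det J_0 = 3$ and $\det J_1 = 20$. Substituting the ansatz $2^{n(n+3)/2}(2^{n+1}+1)$ into both sides of this recurrence is a routine induction that confirms the closed form.

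The main obstacle is the first step, rigorously identifying the J-fraction; the subsequent passage to the Hankel determinants is then essentially formal. An alternative route that avoids continued fractions is a simultaneous induction on $h_n(\delta)$ and $h_n(\overline{\delta})$ via the Desnanot-Jacobi identity (listed as an available tool in the paper) applied to $H_{n+1}(\delta)$, but this forces the introduction of a third Hankel determinant $g_n = \det[D_{i+j+2}]$ to close the recursion and the need to guess its closed form, so one ends up encoding the same information as the J-fraction in a less transparent way.
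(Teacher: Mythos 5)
Your argument is correct, and all the numerical data check out: the J-fraction coefficients $b_n=3$, $\lambda_1=4$, $\lambda_n=2$ ($n\ge 2$) are right (your fixed-point verification $1/(1-3x-4x^2F(x))=1/\sqrt{1-6x+x^2}$ is clean), the resulting norms $\gamma_0=1$, $\gamma_n=2^{n+1}$ give $h_n(\delta)=\prod_{k=1}^n 2^{k+1}=2^{n(n+3)/2}$, and the tridiagonal recurrence $\det J_n=3\cdot 2^{n+1}\det J_{n-1}-4^{n+1}\det J_{n-2}$ with $\det J_0=3$, $\det J_1=20$ is satisfied by $2^{n(n+3)/2}(2^{n+1}+1)$. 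For comparison: the paper does not prove this lemma at all; it simply cites it as known from Mu--Wang--Yeh, where the Delannoy numbers are handled inside the ``Catalan-like numbers'' framework, which encodes exactly the same data $(b_n,\lambda_n)$ as your J-fraction. So your proposal supplies a self-contained proof of a statement the paper outsources, using what is essentially the standard orthogonal-polynomial mechanism behind the cited result; your remark that a pure Desnanot--Jacobi induction would need an auxiliary doubly-shifted Hankel determinant is also apt (note that the paper itself uses Desnanot--Jacobi only later, for $h_n(\varepsilon)$, precisely by leaning on the two closed forms of this lemma). One small presentational caveat: you invoke a measure $\mu$, whose existence you have not established; either appeal to Favard's theorem (the $\lambda_n$ are positive) or, more simply, replace $\int\cdot\,d\mu$ throughout by the formal linear functional $\mathcal{L}$ with $\mathcal{L}[x^n]=D_n$ --- the identities $\mathcal{L}[P_iP_j]=\gamma_i\delta_{ij}$, $H(\delta)=\tilde M\,\mathrm{diag}(\gamma_i)\,\tilde M^T$ and $H(\overline{\delta})=\tilde M J\tilde M^T$ are purely formal consequences of the J-fraction expansion and need no positivity.
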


The sequence
$(C_n)_{n\ge 0}$ is not SM
since the Hankel determinant $\det [C_{i+j}]_{0\le i,j\le 2}=-4<0$.
We next show that the sequence $(S_n)_{n\ge 0}$ is SM.
We need the following determinant evaluation rule (see Zeilberger \cite{Zil97} for a combinatorial proof).

\begin{DJI}\label{dji}
Let the matrix $M=[m_{ij}]_{0\le i,j\le k}$. 
Then $$\det M\cdot\det M^{0,k}_{0,k}=\det M_k^k\cdot\det M_0^0-\det M_0^k\cdot\det M_k^0,$$
where $M^I_J$ denote the submatrix obtained from $M$ by deleting those rows in $I$ and columns in $J$.
\end{DJI}

\begin{thm}
The coordination numbers $S_0,S_1,S_2,\ldots$ form a Stieltjes moment sequence.
\end{thm}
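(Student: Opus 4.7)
The plan is to decompose $(S_n)_{n\ge 0}$ as a nonnegative linear combination of Stieltjes sequences and appeal to Lemma \ref{Ben-lem}(iv). From the remark after Proposition \ref{SC-p} I have $S_n = \frac{1}{2}(D_n + D_{n-1})$ for $n\ge 1$, so defining $\tau_n := 2S_n - D_n$ yields $\tau_0 = 1$ and $\tau_n = D_{n-1}$ for $n\ge 1$; in other words $\tau = (1,1,3,13,63,\ldots)$ is the central Delannoy sequence $\delta$ with a leading $1$ prepended. Since $(S_n) = \frac{1}{2}\bigl((D_n) + \tau\bigr)$ and $\delta = (D_n)$ is SM (by Lemmas \ref{Ben-lem}(ii) and \ref{Dn-s}), it suffices by Lemma \ref{Ben-lem}(iv) to prove that $\tau$ itself is SM.

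I will verify that $\tau$ is SM via Lemma \ref{Ben-lem}(ii), which requires $h_n(\tau)\ge 0$ and $h_n(\overline\tau)\ge 0$ for all $n$. Observe that $\overline\tau = (\tau_{n+1}) = (D_n) = \delta$, so $h_n(\overline\tau) = h_n(\delta) = 2^{n(n+3)/2}>0$ is handed to me by Lemma \ref{Dn-s}. The entire problem thus collapses to computing $h_n(\tau)$. The small cases $h_0(\tau)=1$, $h_1(\tau)=2$, $h_2(\tau)=8$ suggest the closed form $h_n(\tau) = 2^{n(n+1)/2}$, which I will establish by induction.

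The inductive step will use the Desnanot-Jacobi identity applied to the $(n+2)\times(n+2)$ Hankel matrix $M = [\tau_{i+j}]_{0\le i,j\le n+1}$. Because $\tau_{k+1} = D_k$ and $\tau_{k+2} = D_{k+1}$ for $k\ge 0$, every submatrix appearing in DJI reduces to a Hankel minor of $\delta$ or $\overline\delta$: deleting row $0$ and column $n+1$ (or vice versa) produces $h_n(\delta)$, deleting row and column $0$ produces $h_n(\overline\delta)$, and deleting both rows and columns $\{0,n+1\}$ produces $h_{n-1}(\overline\delta)$. Inserting the values from Lemma \ref{Dn-s} turns DJI into an explicit one-step recurrence for $h_{n+1}(\tau)$ in terms of $h_n(\tau)$; substituting the conjecture $h_n(\tau)=2^{n(n+1)/2}$ and using the key simplification $(2^{n+1}+1)\cdot 2^{n(n+1)/2} - 2^{n(n+3)/2} = 2^{n(n+1)/2}(2^n+1)$ confirms $h_{n+1}(\tau) = 2^{(n+1)(n+2)/2}$ and closes the induction.

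The main obstacle is the bookkeeping for DJI: one must correctly identify six minors of $M$ and keep straight the three distinct exponent expressions without sign or arithmetic errors, while also checking that the divisor $h_{n-1}(\overline\delta) = 2^{(n-1)(n+2)/2}(2^n+1)$ is nonzero (which it clearly is). Once $h_n(\tau) = 2^{n(n+1)/2}>0$ is established, Lemma \ref{Ben-lem}(ii) yields that $\tau$ is SM and Lemma \ref{Ben-lem}(iv) then yields that $(S_n)$ is SM, as desired.
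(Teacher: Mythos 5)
Your proposal is correct and follows essentially the same route as the paper: the same auxiliary sequence ($\tau=\varepsilon$ with $E_0=1$, $E_n=D_{n-1}$), the same reduction via Lemma \ref{Ben-lem}(ii),(iv) and Lemma \ref{Dn-s}, and the same use of the Desnanot--Jacobi identity on the Hankel matrix of $\tau$ to get $h_n(\tau)=2^{n(n+1)/2}$. The only (immaterial) difference is that you solve the DJI recurrence by induction on the conjectured closed form, whereas the paper telescopes it via a partial-fraction sum.
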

\begin{proof}
Let $E_0=1, E_n=D_{n-1}$ for $n\ge 1$ and denote $\varepsilon=(E_n)_{n\ge 0}$.
Then by \eqref{s-d-d}, we have
$$S_{n}=\frac{1}{2}(D_{n}+E_{n}),\qquad n=0,1,2,\ldots.$$
By Lemma \ref{Ben-lem} (iv) and Lemma \ref{Dn-s},
to show that $(S_n)_{n\ge 0}$ is SM,
it suffices to show that $(E_n)_{n\ge 0}$ is SM.
By Lemma~\ref{Ben-lem}~(ii),
we need to show that $h_n(\varepsilon)\ge 0$ and $h_n(\overline{\varepsilon})\ge 0$ for all $n\ge 0$.
Note that $\overline{\varepsilon}=\delta$ and $h_n(\delta)>0$ by \eqref{Dn-h}.
Hence it suffices to show that $h_n(\varepsilon)\ge 0$.
Actually, we can show that $h_n(\varepsilon)=2^{n(n+1)/2}$.

Applying Desnanot-Jacobi Determinant Identity to the Hankel matrix $[E_{i+j}]_{0\le i,j\le k}$, we obtain
$$h_k(\varepsilon)\cdot h_{k-2}(\overline{\delta})
=h_{k-1}(\varepsilon)\cdot h_{k-1}(\overline{\delta})-\left[h_{k-1}(\delta)\right]^2,$$
or equivalently,
$$\frac{h_k(\varepsilon)}{h_{k-1}(\overline{\delta})}=
\frac{h_{k-1}(\varepsilon)}{h_{k-2}(\overline{\delta})}
-\frac{\left[h_{k-1}(\delta)\right]^2}{h_{k-2}(\overline{\delta})h_{k-1}(\overline{\delta})}.$$
It follows that
\begin{equation*}\label{h-f}
  \frac{h_n(\varepsilon)}{h_{n-1}(\overline{\delta})}=
\frac{h_1(\varepsilon)}{h_{0}(\overline{\delta})}
-\sum_{k=2}^{n}\frac{\left[h_{k-1}(\delta)\right]^2}{h_{k-2}(\overline{\delta})h_{k-1}(\overline{\delta})}.
\end{equation*}
Note that $h_1(\varepsilon)=2, h_{0}(\overline{\delta})=3$ and
$$
\frac{\left[h_{k-1}(\delta)\right]^2}{h_{k-2}(\overline{\delta})h_{k-1}(\overline{\delta})}
=\frac{2^{k}}{(2^{k-1}+1)(2^k+1)}
=\frac{2}{2^{k-1}+1}-\frac{2}{2^{k}+1}
$$
by \eqref{Dn-h}.
Hence
$$\frac{h_n(\varepsilon)}{h_{n-1}(\overline{\delta})}
=\frac{2}{3}-\sum_{k=2}^{n}\left[\frac{2}{2^{k-1}+1}-\frac{2}{2^{k}+1}\right]
=\frac{2}{2^{n}+1},$$
and so
$$h_n(\varepsilon)=\frac{2}{2^{n}+1}h_{n-1}(\overline{\delta})
=2^{n(n+1)/2}$$
by \eqref{Dn-h} again,
as claimed.
The sequence $(S_n)_{n\ge 0}$ is therefore SM.
\end{proof}

\section{Concluding remarks and further work}

Conway and Sloane \cite{CS97} investigated the coordination sequences
of many classical root lattices besides the cubic lattices
and gave explicit formulae for these coordination sequences.
A natural problem is to explore analytic behaviors of
the corresponding coordination sequences.

The coordination sequences of root lattices are studied usually
by means of their coordinator triangles and coordinator polynomials.
There have been some results related to analytic properties of
coordinator triangles and coordinator polynomials
of root lattices.
For example,
Conway and Sloane \cite{CS97} showed that the coordinator triangle for the lattices of type A
is precisely the Narayana triangle
$\mathrm{N}_\mathrm{B}=\left[\binom{n}{k}^2\right]_{n\ge k\ge 0}$
of type B.
The Narayana triangle $\mathrm{N}_\mathrm{B}$ is totally positive~\cite{WY18}
and its row generating functions have only real zeros~\cite{WZ13}. 
It is not difficult to show that the numbers $\binom{n}{k}^2$
are asymptotically normal 
by Stirling approximation.
Conway and Sloane \cite{CS97} showed also that the coordinator polynomials of the dual lattices D$^*$ of type D
have the form of $(x+1)A_{n-1}(x)$,
where $A_n(x)$ are the classical Eulerian polynomials.
Thus
the coordinator triangle $\mathrm{CT(D}^*)$ of D$^*$ 
has the matrix decomposition $\mathrm{CT(D}^*)=A^+\cdot R_0$,
where $A$ is the Eulerian triangle,
$A^+=
\left(
  \begin{array}{cc}
    1 & 0 \\
    0 & A \\
  \end{array}
\right)$ and $R_0=\R(1+x,x)$, i.e.,
$$
\left(
  \begin{array}{cccccc}
    1 &  &  &  & &\\
    1 & 1 &  &  &  &\\
    1 & 2 & 1 &  &  &\\
    1 & 5 & 5 & 1 &  &\\
    1 & 12 & 22 & 12 & 1 &\\
    \vdots &  &  & & & \ddots \\
  \end{array}
\right)
=
\left(
  \begin{array}{cccccc}
    1 &  &  &  & &\\
    0 & 1 &  &  &  & \\
    0 & 1 & 1 &  & & \\
    0 & 1 & 4 & 1 & & \\
    0 &1 &11& 11 &1 &\\
    \vdots & \vdots &  & & & \ddots \\
  \end{array}
\right)\cdot
\left(
  \begin{array}{cccccc}
    1 &  &  &  & &\\
    1 & 1 &  &  &  &\\
     & 1 & 1 &  &  &\\
     &  & 1 & 1 &  &\\
     & & & 1 & 1 &  \\
    &  & & &\ddots & \ddots\\
  \end{array}
\right)
.$$
The Eulerian polynomials have only real zeros (see \cite{WY05} for instance),
so do the coordinator polynomials of D$^*$.
The Eulerian numbers are asymptotically normal (see \cite{CKSS72} for instance),
so are the coordinator numbers of D$^*$ by Lemma \ref{lem-rzv}
(see \cite[\S 4.5.5]{HCD20} also).
However, we do not know whether the coordinator triangle of D$^*$ is TP.
A closely related problem is a long-standing conjecture due to Brenti \cite[Conjecture 6.10]{Bre96},
which claims that
the Eulerian triangle $A$ is TP.
Clearly, $R_0$ is TP.
It is also clear that $A^+$ is TP if and only if $A$ is TP
and that the product of TP matrices is still TP.
So, if Brenti's conjecture is true,
then the coordinator triangle $\mathrm{CT(D}^*)$ is TP.
We refer the reader to \cite{ABHPS11,BG97,Mon15,PS07,WZ13,XZ14}
for some more related results.
An interesting topic is to systematically investigate
analytic properties of
coordinator triangles and coordinator polynomials
of root lattices.


\section*{Acknowledgements}

This work was supported in part by the
National Natural Science Foundation of China (Nos. 12001301,12171068).

\section*{References}


\begin{thebibliography}{99}
\bibitem{ABHPS11}F. Ardila, M. Beck, S. Hosten, J. Pfeifle, K. Seashore,
Root polytopes and growth series of root lattices,
SIAM J. Discrete Math. 25 (2011) 360--378.
\bibitem{BG97}M. Baake, U. Grimm,
Coordination sequences for root lattices and related graphs,
Z. Krist. 212 (1997) 253--256.
\bibitem{BS05}C. Banderier, S. Schwer,
Why Delannoy numbers?
J. Statist. Plann. Inference 135 (2005) 40--54.
\bibitem{Ben73}E.A. Bender,
Central and local limit theorems applied to asymptotic enumeration,
J. Combin. Theory Ser. A 15 (1973) 91--111.
\bibitem{Ben11}G. Bennett,
Hausdorff means and moment sequences,
Positivity 15 (2011) 17--48.
\bibitem{BKW78}S. Beraha, J. Kahane, N. Weiss,
Limits of zeros of recursively defined families of polynomials, in: G. Rota (Ed.),
Studies in Foundations and Combinatorics, Advances in Mathematics, Supplementary Studies, Vol. 1, Academic Press, New York, 1978, pp. 213--232.
\bibitem{Bre89}F. Brenti,
Unimodal, log-concave and P\'olya frequency sequences in combinatorics,
Mem. Amer. Math. Soc. 413 (1989).
\bibitem{Bre95}F. Brenti,
Combinatorics and total positivity,
J. Combin. Theory Ser. A 71 (1995) 175--218.
\bibitem{Bre96}F. Brenti,
The applications of total positivity to combinatorics, and conversely,
in: M. Gasca, C.A. Micchelli (Eds.),
Total Positivity and Its Applications, Kluwer Academic Pub., Dordrecht, The Netherlands, 1996, pp. 451--473.
\bibitem{Can15}E.R. Canfield,
Asymptotic normality in enumeration,
Handbook of enumerative combinatorics,  255--280,
Discrete Math. Appl. (Boca Raton), CRC Press, Boca Raton, FL, 2015.
\bibitem{CKSS72}L. Carlitz, D.C. Kurtz, R. Scoville, O.P. Stackelberg,
Asymptotic properties of Eulerian numbers,
Z. Wahrscheinlichkeitstheor. Verwandte Geb. 23 (1972) 47--54.
\bibitem{CLW15} X. Chen, H. Liang, Y. Wang,
Total positivity of Riordan arrays,
European J. Combin. 46 (2015) 68--74.
\bibitem{CLW15B} X. Chen, H. Liang, Y. Wang,
Total positivity of recursive matrices,
Linear Algebra Appl. 471 (2015) 383--393.
\bibitem{CW19}X. Chen, Y. Wang,
Notes on the total positivity of Riordan arrays,
Linear Algebra Appl. 569 (2019) 156--161.
\bibitem{Com74}L. Comtet,
Advanced Combinatorics,
Reidel, Dordrecht, 1974.
\bibitem{CS97}J.H. Conway, N.J.A. Sloane,
Low-dimensional lattices. VII. Coordination sequences,
Proc. R. Soc. Lond. Ser. A 453 (1997) 2369--2389.
\bibitem{Dar64}J.N. Darroch,
On the distribution of the number of successes in independent trials,
Ann. Math. Statist. 35 (1964) 1317--1321.
\bibitem{GP20} D. Galvin, A. Pacurar,
Total non-negativity of some combinatorial matrices,
J. Combin. Theory Ser. A 172 (2020) Article 105179.
\bibitem{HS09}T.-X. He, R. Sprugnoli,
Sequence characterization of Riordan arrays,
Discrete Math.  309  (2009) 3962--3974.
\bibitem{HCD20}H.-K. Hwang, H.-H. Chern, G.-H. Duh,
An asymptotic distribution theory for Eulerian recurrences with applications,
Adv. in Appl. Math. 112 (2020) 101960.
\bibitem{Kar68}S. Karlin,
Total Positivity, Volume 1,
Stanford University Press, 1968.
\bibitem{LW-rz}L.L. Liu, Y. Wang,
A unified approach to polynomial sequences with only real zeros,
Adv. in Appl. Math. 38 (2007) 542--560.
\bibitem{LW-lcx}L.L. Liu, Y. Wang,
On the log-convexity of combinatorial sequences,
Adv. in Appl. Math. 39 (2007) 453--476.
\bibitem{MMW22}J. Mao, L. Mu, Y. Wang,
Yet another criterion for the total positivity of Riordan arrays,
Linear Algebra Appl.  634 (2022) 106--111.
\bibitem{Mon12a}P. Mongelli,
Total positivity properties of Jacobi-Stirling numbers,
Adv. in  Appl. Math. 48 (2012) 354--364.
\bibitem{Mon12e}P. Mongelli,
On the total positivity of restricted Stirling numbers,
European J. Combin. 33 (2012) 446--448.
\bibitem{Mon15}P. Mongelli,
Signed excedance enumeration in classical and affine Weyl groups,
J. Combin. Theory Ser. A 130 (2015) 129--149.
\bibitem{MWY17}L. Mu, Y. Wang, Y. Yeh,
Hankel determinants of linear combinations of consecutive Catalan-like numbers,
Discrete Math. 340 (2017) 3097--3103.
\bibitem{OK98}M. O'Keeffe,
Coordination sequences for hyperbolic tillings,
Z. Krist. 213 (1998) 135--140.
\bibitem{PS07}F. Patras, P. Sol\'e,
The coordinator polynomial of some cyclotomic lattices,
European J. Combin. 28 (2007) 17--25.
\bibitem{Pin10}A. Pinkus,
Totally Positive Matrices,
Cambridge University Press, Cambridge, 2010.
\bibitem{SGWW91}L.W. Shapiro, S. Getu, W.-J. Woan, L.C. Woodson,
The Riordan group,
Discrete Appl. Math. 34 (1991) 229--239.
\bibitem{SS91}L.W. Shapiro, A.B. Stephens,
Bootstrap percolation, the Schr\"oder numbers, and the N-kings problem,
SIAM J. Discrete Math. 4 (2) (1991) 275--280.
\bibitem{Sze75}G. Szeg\"o,
Orthogonal Polynomials, 
4th ed., American Mathematical Society, Providence, RI, 1975.
\bibitem{Slo}N.J.A. Sloane,
The On-Line Encyclopedia of Integer Sequences,
Available at https://oeis.org.
\bibitem{Spr94}R. Sprugnoli,
Riordan arrays and combinatorial sums,
Discrete Math. 132 (1994) 267--290.
\bibitem{Sul03}R.A. Sulanke,
Objects counted by the central Delannoy numbers,
J. Integer Seq. 6 (2003) Article 03.1.5.
\bibitem{WZ13}D.G.L. Wang, T. Zhao,
The real-rootedness and log-concavities of coordinator polynomials of Weyl group lattices,
European J. Combin. 34 (2013) 490--494.
\bibitem{WY18}Y. Wang, A.L.B. Yang,
Total positivity of Narayana matrices,
Discrete Math. 341 (2018) 1264--1269.
\bibitem{WY05}Y. Wang, Y.-N. Yeh,
Polynomials with real zeros and P\'olya frequency sequences,
J. Comb. Theory, Ser. A 109 (2005) 63--74.
\bibitem{WZC19}Y. Wang, S.-N. Zheng, X. Chen,
Analytic aspects of Delannoy numbers,
Discrete Math. 342 (2019) 2270--2277.
\bibitem{WZ16}Y. Wang, B.-X. Zhu,
Log-convex and Stieltjes moment sequences,
Adv. in Appl. Math. 81 (2016) 115--127.
\bibitem{XZ14}M.H.Y. Xie,  P.B. Zhang,
Derivation of the real-rootedness of coordinator polynomials from the Hermite-Biehler theorem,
Acta Math. Hungar. 143 (2014) 185--191.
\bibitem{Zil97}D. Zeilberger,
Dodgson's determinant evaluation rule proved by two-timing men and women,
Electron. J. Combin. 4 (1997). Research Paper 22.
\bibitem{Zhu21}B.-X. Zhu,
Total positivity from the exponential Riordan arrays,
SIAM J. Discrete Math. 35 (2021) 2971--3003.
\end{thebibliography}
\end{document}